\documentclass{amsart}
\pagestyle{plain}
\usepackage{amssymb}
\usepackage{amsmath}
\usepackage{graphicx}
\usepackage{manfnt}
\usepackage{amsthm}
\usepackage[mathscr]{euscript}
\usepackage{mathtools}
\usepackage{epstopdf}
\usepackage{bigints}
\usepackage{color}
\usepackage{dsfont}
\usepackage{enumerate}
\usepackage{enumitem}
\usepackage{tikz}
\usepackage{bm}
\usetikzlibrary{arrows}
\usetikzlibrary{positioning}
\usepackage{mathrsfs}
\numberwithin{equation}{section}
\usepackage{hyperref}
\usepackage{tikz-cd}
\usepackage{multicol}
\usepackage{mathrsfs}

\newcommand{\bbm}{\begin{bmatrix}}
\newcommand{\ebm}{\end{bmatrix}}
\newcommand{\bv}{\begin{vmatrix}}
\newcommand{\ev}{\end{vmatrix}}

\newcommand{\g}{\mathfrak{g}}
\newcommand{\lev}{\mathfrak{l}}
\newcommand{\bb}{\mathfrak{b}}
\newcommand{\p}{\mathfrak{p}}

\newcommand{\n}{\mathfrak{n}}
\newcommand{\h}{\mathfrak{h}}

\newcommand{\z}{\mathfrak{z}}

\newcommand{\N}{\mathcal{N}}
\newcommand{\cO}{\mathcal{O}}

\newcommand{\C}{\mathbb{C}}
\newcommand{\Z}{\mathbb{Z}}
\newcommand{\A}{\mathbb{A}}
\newcommand{\beq}{\begin{equation*}}
\newcommand{\eeq}{\end{equation*}}
\newcommand{\beqn}{\begin{eqnarray*}}
\newcommand{\eeqn}{\end{eqnarray*}}

\newcommand{\mf}{\mathfrak}
\newcommand{\mc}{\mathcal}
\newcommand{\bp}{\begin{pmatrix}}
\newcommand{\ep}{\end{pmatrix}}

\DeclareMathOperator{\ch}{ch}
\DeclareMathOperator{\End}{End}
\DeclareMathOperator{\Hom}{Hom}
\DeclareMathOperator{\Ann}{Ann}
\DeclareMathOperator{\Max}{Max}

\DeclareMathOperator{\im}{im }
\DeclareMathOperator{\Int}{Int}

\DeclareMathOperator{\Ext}{Ext}
\DeclareMathOperator{\Vect}{Vect}
\DeclareMathOperator{\ad}{ad}

\theoremstyle{plain}
\newtheorem{theorem}{Theorem}[section]
\newtheorem*{theorem*}{Theorem}
\theoremstyle{plain}
\theoremstyle{definition}
\newtheorem{definition}[theorem]{Definition}
\newtheorem{lemma}[theorem]{Lemma}
\theoremstyle{definition}
\newtheorem{proposition}[theorem]{Proposition}
\theoremstyle{plain}

\theoremstyle{plain}
\newtheorem{corollary}[theorem]{Corollary}
\theoremstyle{remark}
\newtheorem{remark}[theorem]{Remark}
\theoremstyle{definition}

\theoremstyle{definition}

\title{Contravariant pairings between standard Whittaker modules and Verma modules}
\author{Adam Brown and Anna Romanov}
\date{}

\begin{document}

\maketitle

\begin{abstract}
We classify contravariant pairings between standard Whittaker modules and Verma modules over a complex semisimple Lie algebra. These contravariant pairings are useful in extending several classical techniques for category $\mc{O}$ to the Mili\v{c}i\'{c}--Soergel category $\mc{N}$. 
We introduce a class of costandard modules which generalize dual Verma modules, and describe canonical maps from standard to costandard modules in terms of contravariant pairings. 
We show that costandard modules have unique irreducible submodules and share the same composition factors as the corresponding standard Whittaker modules.
We show that costandard modules give an algebraic characterization of the global sections of costandard twisted Harish-Chandra sheaves on the associated flag variety, which are defined using holonomic duality of $\mc{D}$-modules. 
We prove that with these costandard modules, blocks of category $\mc{N}$ have the structure of highest weight categories and we establish a BGG reciprocity theorem for $\N$. 
\end{abstract}
\section{Introduction}
\label{introduction}

In this paper we classify contravariant bilinear pairings between standard Whittaker modules and Verma modules. We use these pairings to adapt several classical proofs for Verma modules and category $\mc{O}$ to the setting of Whittaker modules. We begin with some context.

Let $\mf{g}$ be a semisimple Lie algebra over $\C$.\footnote{ The results in this paper hold more generally for reductive Lie algebras. However, since many of our references assume semisimplicity, we chose to continue working under this assumption.}
A contravariant form on a $\mf{g}$-module $V$ is a bilinear form on $V$ such that for $v, w \in V$ and $X \in \mf{g}$, $\langle X \cdot v, w \rangle = \langle v, \tau(X) \cdot w \rangle$, where $\tau$ is the transpose antiautomorphism.
If the linear dual $V^* = \Hom_\C(V, \C)$ is given the structure of a $\mf{g}$-module via the action 
\[
X \cdot f(v) = f (\tau(X) \cdot v)
\]
for $X \in \mf{g}$, $f \in V^*$, $v \in V$, then the space of contravariant forms on $V$ is canonically isomorphic to
\[
\Hom_\mf{g}(V,V^*). 
\]
A classical result of Shapovalov shows that if $V$ is a Verma module, this space is 1-dimensional~\cite{Shapovalov}. 
In \cite{BrownRomanov}, the authors generalize Shapovalov's results by classifying contravariant forms on standard Whittaker modules. However, unlike for Verma modules, the space of contravariant forms on a standard Whittaker module is no longer guaranteed to be 1-dimensional. This feature of standard Whittaker modules presents an obstacle in generalizing several constructions for category $\mc{O}$ to Whittaker modules. We provide a brief example of this obstacle below.

In \cite{CompositionSeries}, Mili\v{c}i\'{c}--Soergel introduced a category $\mc{N}$ of $\mf{g}$-modules which interpolates between category $\mc{O}$ and the category of non-degenerate Whittaker modules (Definition \ref{def:category-N}). Category $\mc{N}$ contains category $\mc{O}$ as a full subcategory, and can be viewed as a natural generalization of it. Standard Whittaker modules, which are cyclically generated by a vector on which the nilpotent radical $\mf{n}$ of a Borel subalgebra of $\mf{g}$ acts by a character $\eta\in\mf{n}^\ast$, play the role of Verma modules in category $\N$. One of the celebrated features of category $\mc{O}$ is that its blocks have the structure of highest weight categories~\cite{CPS}, with costandard modules defined as the $\mf{n}$-finite vectors in the linear dual of a Verma module. It is natural to ask if this structure extends to $\N$. While it is known that blocks of category $\N$ with regular integral infinitesimal character are highest weight categories (this follows by reducing the problem to a singular block of category $\mc{O}$ \cite{CompositionSeries}), 
the results of \cite{BrownRomanov} show that the straight-forward generalization of the above definition of costandard modules does not yield the same result for $\N$: defining costandard modules to be spaces of $\eta$-twisted $\mf{n}$-finite vectors in the linear dual of a standard Whittaker module does not give blocks of $\mc{N}$ the structure of highest weight categories.  The problem is the surplus of contravariant forms.
 
In this paper, we address these disparities by giving an alternate generalization of Shapovalov's results. Our main contribution is the classification of \emph{contravariant pairings} (Definition \ref{contravariant pairing def}) between standard Whittaker modules and Verma modules.
\begin{theorem*}[Theorem \ref{thm:unique-forms}]
Assume $\lambda+\rho\in \h^*$ is regular. Let $M(\lambda,\eta)$ be a standard Whittaker module (Definition \ref{standard Whittaker module}) and $M(\mu)$ be a Verma module with highest weight $\mu\in\h^*$. Let $W_\eta$ be the subgroup of the Weyl group of $\mf{g}$ determined by $\eta\in\mf{n}^*$ (Section \ref{sec:standard-whittaker-modules}). Then 
\[
\Hom_\mf{g}(M(\lambda,\eta),M(\mu)^\ast)=\begin{cases}
\mathbb{C}&\text{ if $\mu\in W_\eta\cdot\lambda$}\\
0&\text{ else.}
\end{cases}
\]
\end{theorem*}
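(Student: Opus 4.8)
The plan is to reduce the $\Hom$-space to a question about $\overline{\n}$-coinvariants, then to reduce that, via parabolic induction, to Kostant's structure theory for the nondegenerate Whittaker module of the Levi $\lev_\eta$ attached to $\eta$.

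\emph{Step 1: From $\Hom$ to coinvariants.} First I would record that a contravariant pairing is a self-dual notion: evaluating a map $\varphi\in\Hom_\g(A,B^\ast)$ on pairs of vectors gives a bilinear form $\tau$-contravariant in each slot, and this identifies $\Hom_\g(A,B^\ast)\cong\Hom_\g(B,A^\ast)$ canonically. Applying this with $A=M(\lambda,\eta)$, $B=M(\mu)$ and using the universal property $M(\mu)=U(\g)\otimes_{U(\bb)}\C_\mu$, one gets
\[
\Hom_\g(M(\lambda,\eta),M(\mu)^\ast)\;\cong\;\Hom_\g(M(\mu),M(\lambda,\eta)^\ast)\;\cong\;\{\,f\in M(\lambda,\eta)^\ast:\ \n f=0,\ hf=\mu(h)f\ \forall h\in\h\,\}.
\]
A functional $f$ with $\n f=0$ is exactly one vanishing on $\overline{\n}M(\lambda,\eta)$, hence factors through the $\overline{\n}$-coinvariants $M(\lambda,\eta)_{\overline{\n}}:=M(\lambda,\eta)/\overline{\n}M(\lambda,\eta)$ (an $\h$-module, as $\h$ normalizes $\overline{\n}$), and $hf=\mu(h)f$ cuts it down to the largest quotient of $M(\lambda,\eta)_{\overline{\n}}$ on which $\h$ acts by $\mu$. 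So the theorem becomes: $M(\lambda,\eta)_{\overline{\n}}\cong\bigoplus_{w\in W_\eta}\C_{w\cdot\lambda}$ as $\h$-modules, with each weight occurring once (regularity of $\lambda+\rho$ making the $w\cdot\lambda$ distinct); then the weight-$\mu$ space is one-dimensional precisely when $\mu\in W_\eta\cdot\lambda$ and zero otherwise, and one dualizes these finite-dimensional spaces. (As a warm-up sanity check, $\tau$ fixes $Z(\g)$, so $M(\mu)^\ast$ has the same infinitesimal character as $M(\mu)$, already forcing $\mu\in W\cdot\lambda$.)

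\emph{Step 2: Reduction to the Levi.} Recall $M(\lambda,\eta)=U(\g)\otimes_{U(\p_\eta)}Y$, where $\p_\eta=\lev_\eta\oplus\uu_\eta$ is the parabolic attached to $\eta$, $W_\eta$ is the Weyl group of $\lev_\eta$, and $Y=Y_{\lev_\eta}(\lambda,\eta)$ is the nondegenerate Whittaker module of $\lev_\eta$ inflated to $\p_\eta$ by letting $\uu_\eta$ act by zero. Writing $\overline{\n}=\overline{\uu}_\eta\oplus(\overline{\n}\cap\lev_\eta)$ with $\overline{\uu}_\eta$ an ideal, coinvariants can be taken in two stages. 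A PBW/associated-graded argument identifies $M(\lambda,\eta)_{\overline{\uu}_\eta}$ with $Y$ as $\lev_\eta$-modules (the key point being that $\overline{\uu}_\eta$ is a subalgebra acting by left multiplication on the first factor of $M(\lambda,\eta)\cong U(\overline{\uu}_\eta)\otimes Y$). Hence $M(\lambda,\eta)_{\overline{\n}}\cong Y_{\,\overline{\n}\cap\lev_\eta}$ as $\h$-modules, and we are reduced to computing the $(\overline{\n}\cap\lev_\eta)$-coinvariants of Kostant's nondegenerate Whittaker module over the reductive Lie algebra $\lev_\eta$ (for which $\eta$ is nondegenerate and the relevant Weyl group is all of $W_\eta$).

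\emph{Step 3: The Kostant computation, and the main obstacle.} Write $\tilde Y_\eta=U(\lev_\eta)\otimes_{U(\n\cap\lev_\eta),\eta}\C$ for the universal nondegenerate Whittaker module, so $Y=\tilde Y_\eta\otimes_{Z(\lev_\eta)}\C_{\chi_\lambda}$. Using the PBW decomposition of $\tilde Y_\eta$ through $U(\overline{\n}\cap\lev_\eta)\otimes U(\h)$, one gets $(\tilde Y_\eta)_{\,\overline{\n}\cap\lev_\eta}\cong U(\h)$; and the ``lower-triangular'' Harish--Chandra homomorphism $z\equiv\gamma(z)\pmod{(\overline{\n}\cap\lev_\eta)U(\lev_\eta)}$ shows that $Z(\lev_\eta)$ acts on this quotient through $\gamma\colon Z(\lev_\eta)\xrightarrow{\sim}U(\h)^{W_\eta}\hookrightarrow U(\h)$ by multiplication. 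Since $(\,\cdot\,)_{\overline{\n}\cap\lev_\eta}$ commutes with base change along the central quotient,
\[
Y_{\,\overline{\n}\cap\lev_\eta}\;\cong\;U(\h)\otimes_{U(\h)^{W_\eta}}\C_{\chi_\lambda},
\]
the coordinate ring of the fibre of $\h^\ast\to\h^\ast/W_\eta$ over the point determined by $\chi_\lambda$, with $\h$ acting by the tautological character at each point of the fibre, $\rho$-shifted; when $\lambda+\rho$ is regular this fibre is reduced of size $|W_\eta|$, giving $Y_{\,\overline{\n}\cap\lev_\eta}\cong\bigoplus_{w\in W_\eta}\C_{w\cdot\lambda}$, as needed. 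Combining with Steps 1--2 finishes the argument. I expect the main obstacle to be Step 3: one must verify that the ``wrong-side'' Harish--Chandra projection genuinely governs the central action after taking $\overline{\n}$-coinvariants, i.e.\ that the correction terms $z-\gamma(z)\in(\overline{\n}\cap\lev_\eta)U(\lev_\eta)$ act into $(\overline{\n}\cap\lev_\eta)\tilde Y_\eta$, which requires careful bookkeeping against the Whittaker twist, together with matching $\chi_\lambda$ to $W_\eta(\lambda+\rho)$ with the correct $\rho$-shift; the reducedness of the fibre over a regular point is classical but is exactly where the regularity hypothesis enters. An alternative to Step 3, aligned with the $\mc{D}$-module viewpoint of the later sections, would be to realize $M(\lambda,\eta)$ and $M(\mu)^\ast$ as global sections of the standard and costandard twisted Harish--Chandra sheaves attached to $N$-orbits on the flag variety and compute $\Hom$ there by the usual orbit-stratification argument, which yields $\C$ when the orbits agree and $0$ otherwise, matching the cosets $W_\eta\backslash W$.
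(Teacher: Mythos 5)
Your proposal is correct, and its skeleton is the same as the paper's: the paper also converts the $\Hom$-space into the $\mu$-weight space of $H_0(\bar{\n},M(\lambda,\eta))^\ast$ via the symmetry of contravariant pairings and Frobenius reciprocity, reduces to the nondegenerate Whittaker module over $\lev_\eta$ by the PBW splitting $U(\g)=\bar{\n}^\eta U(\g)\oplus U(\p_\eta)$, and then shows $H_0(\bar{\n}_\eta,Y)\cong\bigoplus_{w\in W_\eta}\C_{w\cdot\lambda}$ using the Harish--Chandra homomorphism and regularity (Theorem \ref{thm:homology}). Where you genuinely diverge is in the last computation. The paper produces a surjection $U(\h)\twoheadrightarrow H_0(\bar{\n},Y(\lambda,\eta))$, invokes the Casselman--Osborne lemma to factor it through $V^\lambda=U(\h)/p(\xi(\lambda))U(\h)$, identifies $V^\lambda$ with the reduced fibre (Lemma \ref{lemma:weight-space}), and then forces the surjection to be an isomorphism by a separate dimension count resting on Lemma \ref{U-free} (freeness of $U(\g)$ as a $(U(\bar{\n}),Z(\g)\otimes U(\n))$-bimodule, a filtered/graded argument). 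You instead compute the coinvariants of the universal Whittaker module exactly, $(\tilde Y_\eta)_{\bar{\n}_\eta}\cong U(\h)$, and base-change along the central character to get $U(\h)\otimes_{U(\h)^{W_\eta}}\C_{\chi_\lambda}$ directly; this replaces both Casselman--Osborne and Lemma \ref{U-free} by the elementary congruence $z\equiv\gamma(z)\ (\mathrm{mod}\ \bar{\n}_\eta U(\lev_\eta))$ together with freeness of $S(\h)$ over $S(\h)^{W_\eta}$. The step you flag as the main obstacle is in fact unproblematic: a central element of $U(\lev_\eta)$ has weight zero, so every PBW monomial with a nontrivial $\n_\eta$-part also has a nontrivial $\bar{\n}_\eta$-part, whence $z-\gamma(z)\in\bar{\n}_\eta U(\lev_\eta)$ and (since $[\h,\bar{\n}_\eta]\subset\bar{\n}_\eta$) this acts into $\bar{\n}_\eta\tilde Y_\eta$ regardless of the Whittaker twist --- this is exactly the degree-zero case of Casselman--Osborne; and the $\rho$ versus $\rho_\eta$ bookkeeping is harmless because $W_\eta$ fixes $\rho-\rho_\eta$, so the two dot actions agree on $W_\eta$. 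Your route buys a shorter, self-contained proof of the homology computation (no bimodule-freeness lemma), at the cost of the generality that Lemma \ref{U-free} provides elsewhere; your suggested geometric alternative parallels the paper's geometric proof of Theorem \ref{thm:homology} via localization and geometric fibres.
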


Unlike contravariant forms on standard Whittaker modules, these contravariant pairings are unique up to scaling, a feature which more closely resembles the case of Verma modules. With this generalization, we extend well-known arguments for Verma modules directly to standard Whittaker modules. We give 5 examples. 
\begin{enumerate}
    \item We define a costandard module in $\N$ to be the space of $\eta$-twisted $\mf{n}$-finite vectors in the linear dual of a Verma module (Definition \ref{def:costandard}). With this definition, contravariant pairings between standard Whittaker modules and Verma modules induce canonical maps between standard and costandard modules in $\N$ (Lemma \ref{lemma:costandard-objects}). 
    \item We show that costandard modules have unique irreducible submodules and share the same composition factors as the corresponding standard Whittaker modules, and that these properties uniquely define the costandard modules up to isomorphism (Theorem \ref{thm:universal-properties}).
    \item We prove that costandard modules align under Beilinson--Bernstein localization with costandard $\eta$-twisted Harish-Chandra sheaves on the associated flag variety (Lemma \ref{lemma:algebraic-geometric-agreement}). 
    \item We prove that our definitions give blocks of category $\N$ the structure of highest weight categories (Corollary \ref{cor:highest-weight}). 
    \item We prove a Beilinson-Gelfand-Gelfand reciprocity theorem for category $\N$ (Theorem \ref{thm:bgg}). 
\end{enumerate}
Using contravariant pairings, we are able to generalize the proofs of these classical results for category $\mc{O}$ in a way that clearly follows the structure of the original arguments. In each of the above cases, when we set $\eta=0$, we recover a traditional proof for category $\mc{O}$.

\begin{remark}
In \cite{CompositionSeries}, certain equivalences between blocks of category $\mc{N}$ with regular integral infinitesimal characters and singular blocks of category $\mc{O}$ are established. One could alternatively define costandard modules as those corresponding to dual Verma modules under these equivalences. Using this approach, results analogous to (2), (3), (4), and (5) above could be deduced directly from the corresponding results in category $\mc{O}$. Our main results about contravariant pairings described above do not follow from these equivalences. 
\end{remark}

This paper is organized as follows. In Section \ref{algebraic preliminaries}, we define category $\mc{N}$ and establish algebraic background, including a classification of standard and simple modules, the construction of Whittaker functors \cite{Backelin}, and a review of Lie algebra (co)homology. In Section \ref{geometric preliminaries}, we define the category of twisted Harish-Chandra sheaves and establish geometric preliminaries, including a classification simple objects, some necessary results about Beilinson--Bernstein localization, and a method for computing Lie algebra homology geometrically. In Section \ref{lie algebra homology of standard whittaker modules}, we compute the Lie algebra homology of standard Whittaker modules (Theorem \ref{thm:homology}) both algebraically and geometrically, which provides our main tool when classifying contravariant pairings in Section \ref{contravariant pairings of standard whittaker modules}. In Section \ref{contravariant pairings of standard whittaker modules}, we define and classify contravariant pairings between standard Whittaker modules and Verma modules (Theorem \ref{thm:unique-forms}). We then give an explicit construction of these contravariant pairings (Theorem \ref{thm:shapovalov}). In Section \ref{costandard whittaker modules}, we define costandard modules in category $\mc{N}$ (Definition \ref{def:costandard}). We give a set of universal properties for costandard modules (Theorem \ref{thm:universal-properties}), and show that contravariant pairings induce morphisms from standard modules to costandard modules in $\mc{N}$. In Section \ref{whittaker modules form a highest weight category}, we give a geometric proof that blocks of $\mc{N}$ are highest weight categories (Theorem \ref{thm:geometric-highest-weight}, Corollary \ref{cor:highest-weight}). We conclude with Section \ref{sec:BGG-Reciprocity}, which uses our results to prove a Bernstein--Gelfand--Gelfand reciprocity formula for $\N$ (Theorem \ref{thm:bgg}).

\section*{Acknowledgements}
We thank Catharina Stroppel and Jens Niklas Eberhardt for interesting discussions. The first author acknowledges the support of the European Union’s Horizon 2020 research and innovation programme under the Marie Skłodowska-Curie Grant Agreement No.\ 754411. The second author is supported by the National Science Foundation Award No.\ 1803059 and the Australian Research Council grant DP170101579.

\section*{Index of Notation}
\label{index of notation}
\setlength{\columnsep}{30pt}
\begin{multicols}{2}

 \begin{itemize}[ leftmargin=*]
 \setlength\itemsep{.75em}
     \item $\g$ semisimple Lie algebra over $\C$
     \item $\mf{h} \subset \mf{b} \subset \g$ fixed Cartan subalgebra and Borel subalgebra in $\g$
     \item $\Sigma \subset \h^*$ root system of $\g$
     \item $\Pi \subset \Sigma^+ \subset \Sigma$ simple and positive roots in $\Sigma$ determined by $\mf{b}$ 
     \item $(W,S)$ associated Coxeter system
     \item $\rho = \frac{1}{2} \sum_{\alpha \in \Sigma^+} \alpha$
     \item $w \cdot \lambda := w(\lambda+ \rho) - \rho$ for $w \in W$, $\lambda \in \h^*$, the dot action 
     \item $\g_\alpha = \{ x \in \g \mid [h,x]=\alpha(h)x\}$ root space for $\alpha \in \Sigma$
     \item $\{y_\alpha, x_\alpha\}_{\alpha \in \Sigma^+} \cup \{h_\alpha\}_{\alpha \in \Pi}$ Chevalley basis of $\g$
     \item $\mf{n}=[\mf{b},\mf{b}]= \bigoplus_{\alpha \in \Sigma^+} \g_\alpha$ nilpotent radical of $\mf{b}$
     \item $\bar{\mf{n}}=\bigoplus_{\alpha \in \Sigma^+}\g_{-\alpha}$
     \item $U(\mf{a})$ universal enveloping algebra of Lie algebra $\mf{a}$
     \item $Z(\mf{a})$ center of $U(\mf{a})$
     \item $p:Z(\g)\rightarrow U(\h)$ Harish-Chandra homomorphism \cite[Ch. 1 \S7]{BGGcatO}
     \item $\chi^\lambda: Z(\g)\rightarrow \C; z \mapsto \lambda \circ p(z)$ infinitesimal character for $\lambda \in \h^*$
     \item $\xi:\h^\ast\rightarrow \Max Z(\g)$, $\xi(\lambda)=\ker\chi^\lambda$
     \item $\ch \mf{n}:= \{\eta: \n \rightarrow \C  \mid \eta \text{ is a }\\ \text{Lie algebra homomorphism}\}$
     \item $\Pi_\eta:=\{\alpha \in \Pi \mid \eta|_{\g_\alpha} \neq 0 \}$ set of simple roots determined by $\eta \in \ch \n$
     \item $\Pi_\eta\subset \Sigma_\eta^+\subset \Sigma_\eta \subset \h^*$ root system generated by $\Pi_\eta$
     \item $W_\eta \subset W$ Weyl group of $\Pi_\eta$
     \item $\mf{l}_\eta = \h \oplus \bigoplus_{\alpha \in \Sigma_\eta}\g_\alpha$ Levi subalgebra of $\g$ determined by $\eta \in \ch \n$
     \item $\mf{n}_\eta = \bigoplus_{\alpha \in \Sigma_\eta^+}\g_\alpha$ nilradical of $\mf{l}_\eta$
     \item $\bar{\n}_\eta = \bigoplus_{\alpha \in \Sigma_\eta^-}\g_\alpha$
     \item $\mf{n}^\eta = \bigoplus_{\alpha \in \Sigma^+-\Sigma_\eta^+} \g_\alpha$
    \item $\mf{p}_\eta = \mf{l}_\eta \oplus \n^\eta$ parabolic subalgebra containing $\mf{l}_\eta$
    \item $p_\eta: Z(\mf{l}_\eta)\rightarrow U(\h)$ Harish-Chandra homomorphism for $Z(\mf{l}_\eta)$
    \item $\chi_\eta^\lambda: Z(\lev_\eta)\rightarrow \C; z \mapsto \lambda \circ p_\eta(z)$ infinitesimal character for $\lambda \in \h^*$
    \item $\xi_\eta:\h^\ast\rightarrow \Max Z(\lev_\eta)$, $\xi_\eta(\lambda)=\ker\chi_\eta^\lambda$
 \end{itemize}
 \end{multicols}

\section{Algebraic Preliminaries}
\label{algebraic preliminaries}
Our algebraic setting is a category $\mc{N}$ of $\g$-modules introduced by Mili\v{c}i\'{c}--Soergel in \cite{CompositionSeries}. Each simple module in $\mc{N}$ is a Whittaker module, and $\mc{N}$ contains all highest weight modules. In particular, $\mc{N}$ contains Bernstein--Gelfand--Gelfand's category $\mc{O}$ \cite{BGG} as a full subcategory. In this section, we define the Mili\v{c}i\'{c}--Soergel category $\mc{N}$ and list some of its basic properties. 
\subsection{A category of Whittaker modules}
\label{a category of whittaker modules}

Fix a Cartan subalgebra $\mf{h}$ of $\mf{g}$ contained in a Borel subalgebra $\mf{b}$, and let $\g = \bar{\n} \oplus \h \oplus \n$ be the corresponding triangular decomposition, with $\n = [\mf{b}, \mf{b}]$. We denote by $\ch{\mf{n}}$ the set of Lie algebra homomorphisms $\eta:\mf{n} \rightarrow \C$. For any Lie algebra $\mf{a}$, we denote by $U(\mf{a})$ its universal enveloping algebra and $Z(\mf{a})$ the center of $U(\mf{a})$. Denote by $\Max{Z(\mf{a})}$ the set of maximal ideals in $Z(\mf{a})$. Any character $\eta \in \ch{\mf{n}}$ can be extended to an algebra homomorphism $\eta:U(\mf{n})\rightarrow \C$ which we will call by the same name. We denote by $\ker \eta \subset U(\mf{n})$ the kernel of the algebra homomorphism.

\begin{definition}
A {\em Whittaker vector of type $\eta \in \ch{\n}$} in a $U(\mf{g})$-module $V$ is a vector $w \in V$ such that $u\cdot w = \eta(u)w$ for all $u \in U(\n)$. An {\em $\eta$-Whittaker module} is a $U(\mf{g})$-module which is cyclically generated by a Whittaker vector of type $\eta$. 
\end{definition}

In \cite[ \S1]{CompositionSeries}, Mili\v{c}i\'{c}--Soergel introduced a category $\mc{N}$ whose simple objects are  irreducible Whittaker modules. 

\begin{definition} 
\label{def:category-N}
Let $\N$ be the category of $U(\g)$-modules which are
\begin{enumerate}
    \item finitely generated,
    \item locally $U(\n)$-finite, and
    \item locally $Z(\g)$-finite. 
\end{enumerate}
\end{definition}
\begin{proposition}\cite[ Lem. 2.1, Lem. 2.2]{TwistedSheaves}
\label{prop:Ndefinition}
The category $\mc{N}$ decomposes into
\[
\N  = \bigoplus_{I\in \text{Max}Z(\g)}\bigoplus_{\eta\in\ch\n} \N\left(\widehat{I},\eta\right),
\]
where $\N\left(\widehat{I},\eta\right)$ is the full subcategory of $\mc{N}$ consisting of objects $M \in \mc{N}$ satisfying the following two conditions:
\begin{enumerate}[label = (\roman*)]
    \item $M$ is annihilated by a power of $I\in\Max Z(\g)$; 
    \item $M$ is locally annihilated by a power of $\ker \eta$.
\end{enumerate}
\end{proposition}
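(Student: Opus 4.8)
The statement I need to prove is the decomposition of category $\mc{N}$ as a direct sum over pairs $(I, \eta)$ with $I \in \Max Z(\g)$ and $\eta \in \ch\n$. This is cited from \cite[Lem. 2.1, Lem. 2.2]{TwistedSheaves}, but let me think about how the argument should go.

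The key structural input is: an object $M \in \mc{N}$ is locally $U(\n)$-finite and locally $Z(\g)$-finite. I want to use this to produce a decomposition.

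Let me think about the $Z(\g)$ part first. Since $M$ is locally $Z(\g)$-finite and finitely generated, there should be a finite set of maximal ideals $I_1, \ldots, I_k$ such that $M$ decomposes as a direct sum $M = \bigoplus_j M_j$ where $M_j$ is the generalized $I_j$-eigenspace (annihilated by a power of $I_j$). This is a standard argument: each generator lies in a finite-dimensional $Z(\g)$-stable subspace, which decomposes into generalized eigenspaces; take the submodule generated by each piece. So $M$ splits according to $\Max Z(\g)$, and because $Z(\g)$ is central, the generalized eigenspace for $I$ is a $\g$-submodule.

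Now for the $\n$ part. Here the subtlety is that $\n$ is not central, so I can't just take generalized eigenspaces as submodules. The right move is to understand what "locally $U(\n)$-finite" forces on the character. The crucial point: if $V$ is a locally finite $U(\n)$-module, then by the structure of $\n$ (a nilpotent Lie algebra), the generalized weight decomposition with respect to characters $\eta \in \ch\n$ IS a module decomposition — but wait, that's not quite right either since $\n$ acts by nilpotent-ish operators. Let me reconsider. Actually the key fact is that $[\n,\n]$ acts locally nilpotently (it's in the commutator, hence on a finite-dimensional $\n$-module it acts nilpotently since $\n/[\n,\n]$ is abelian and $\n$ is nilpotent, so all of $\n$ acts nilpotently modulo the character... no). Hmm — let me think again: on a finite-dimensional module over a nilpotent Lie algebra, by Lie's theorem (or Engel-type arguments) there is a decomposition into generalized weight spaces for characters of $\n$, and these generalized weight spaces for $U(\n)$ are $\n$-submodules. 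But they need not be $\g$-submodules. However, the character $\eta$ is forced to be trivial on $[\n,\n]$ (it's a Lie algebra homomorphism to abelian $\C$), so effectively $\eta$ factors through $\n/[\n,\n]$. The argument that the $\eta$-generalized-weight part is a $\g$-submodule requires knowing that the different characters $\eta$ appearing are related by the $\h$-action (weights differ by roots) — and combining with the $Z(\g)$-finiteness, which pins down things mod the Weyl group, one gets that only finitely many $\eta$'s appear and in fact a single $\eta$ on each indecomposable block. This is where I expect the real work to be, and it's presumably the content of Lemma 2.2 of \cite{TwistedSheaves}: showing that $\eta$-local-finiteness combined with $Z(\g)$-local-finiteness and finite generation forces the module to decompose so that each summand has a well-defined single character $\eta$, and that summand is $\g$-stable.

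So the plan is: \textbf{(Step 1)} Using finite generation and local $Z(\g)$-finiteness, decompose $M = \bigoplus_{I} M_{\widehat I}$ over $\Max Z(\g)$ into generalized eigen-submodules; only finitely many are nonzero. \textbf{(Step 2)} On each $M_{\widehat I}$, analyze the local $U(\n)$-action: each element generates a finite-dimensional $\n$-module, which has a generalized weight-space decomposition for characters of $\n$; show the set of characters $\eta$ arising is constrained. \textbf{(Step 3)} The hard part: show that fixing $\widehat I$, the module further decomposes over $\eta \in \ch\n$ into $\g$-submodules $\N(\widehat I, \eta)$-objects — the point being that the "finitely many $\eta$'s mod root lattice, finitely many weights mod $W$" constraints interact so that distinct $\eta$-components are not linked by the $\g$-action. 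One clean way: consider the subspace $V^\eta \subseteq M_{\widehat I}$ of vectors locally annihilated by a power of $\ker\eta$; show this is a $\g$-submodule (using that $X \in \g_\alpha$ shifts the character $\eta$ by something detectable only through $\h$, and the $\h$-action is controlled by $Z(\g)$-finiteness), and that $M_{\widehat I} = \bigoplus_\eta V^\eta$. Finally verify the summands are exactly the categories $\N(\widehat I, \eta)$ as defined, and that morphisms respect the decomposition (automatic once it's a direct sum of the ambient category into Serre/block subcategories with no Ext or Hom between distinct blocks — which follows from the submodule decomposition applied to any $M$ containing a given pair of objects, or directly since $\Hom$ out of a $(\widehat I, \eta)$-object lands in the corresponding component). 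I would expect Step 3 to be the main obstacle, requiring a careful argument that the generalized $\eta$-weight components assemble into $\g$-stable pieces despite $\n$ being non-central.
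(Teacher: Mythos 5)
Your Step 1 (the $Z(\g)$-block decomposition via generalized eigenspaces of the central subalgebra) is fine, and your overall plan matches the standard route; note the paper itself gives no proof here, citing Mili\v{c}i\'{c}--Soergel directly. The problem is that Step 3, which you yourself flag as ``the main obstacle,'' is exactly the content of the cited lemmas, and you leave it unproved. Moreover, the mechanism you gesture at --- that $\g$-stability of the $\eta$-component should come from how root vectors ``shift'' $\eta$, controlled by the $\h$-action and $Z(\g)$-finiteness --- is misdirected: characters of $\n$ factor through $\n/[\n,\n]$ and are not shifted by roots in any graded way compatible with the adjoint action, so an argument along those lines does not get off the ground. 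As stated, the central claim that $(M)_\eta$ is a $\g$-submodule and that $M=\bigoplus_\eta (M)_\eta$ is asserted, not established, so the proposal has a genuine gap.

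The missing step is in fact elementary and needs neither $Z(\g)$-finiteness nor the $\h$-action. For $X\in\n$, $Y\in\g$ and $v$ with $(X-\eta(X))^m v=0$, one has $(X-\eta(X))^n\, Y\, v=\sum_{k=0}^n \binom{n}{k}\,\bigl((\ad X)^{n-k}Y\bigr)\,(X-\eta(X))^k v$, and since $\ad X$ is nilpotent on $\g$, taking $n$ large kills every term; hence $Yv$ is again $\eta$-twisted $\n$-finite. This is precisely Kostant's argument, reproduced in this paper as Lemma~\ref{lem: twisted U(n)-finite vectors are g-stable}, and it shows $(M)_\eta$ is a $\g$-submodule of any $\g$-module. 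Then local $U(\n)$-finiteness places each vector in a finite-dimensional $U(\n)$-stable subspace, which (as $\n$ is nilpotent) decomposes into generalized weight spaces for characters of $\n$; this gives $M=\bigoplus_{\eta}(M)_\eta$ with only finitely many nonzero summands by finite generation. Combining with your Step 1, and observing that $\g$-morphisms preserve both kinds of generalized eigenspaces (so there are no Homs between distinct blocks), yields the stated decomposition of $\N$.
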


Let $\mc{N}(I, \eta)$ be the subcategory of $\mc{N}\left(\widehat{I}, \eta\right)$ consisting of modules annihilated by $I$. Each irreducible Whittaker module lies in some $\mc{N}(I, \eta)$. 

\subsection{Standard and simple Whittaker modules}
\label{sec:standard-whittaker-modules}
Let $\Pi \subset \Sigma^+ \subset \Sigma\subset \h^*$ be the simple and positive roots in the root system of $\g$ determined by our choice of $\mf{b}$, and let $(W,S)$ be the associated Coxeter system. For a root $\alpha \in \Sigma$, we denote by $\g_\alpha = \{ x \in \g \mid [h,x]=\alpha(h)x\}$ the corresponding root space. With this notation, we have $\mf{n}= \bigoplus_{\alpha \in \Sigma^+} \g_\alpha$.

A character $\eta \in \ch{\mf{n}}$ determines a subset of simple roots:
\[
\Pi_\eta:=\{\alpha \in \Pi \mid \eta|_{\g_\alpha} \neq 0 \}.
\]
Let $\Sigma_\eta \subset \h^*$ be the root system generated by $\Pi_\eta$ and $W_\eta$ the corresponding Weyl group. From $\eta$ we obtain several Lie subalgebras of $\g$. In particular, we name
\[
\mf{l}_\eta = \h \oplus \bigoplus_{\alpha \in \Sigma_\eta}\g_\alpha, \hspace{2mm} \mf{n}_\eta = \bigoplus_{\alpha \in \Sigma_\eta^+}\g_\alpha, \hspace{2mm}\bar{\n}_\eta = \bigoplus_{\alpha \in \Sigma_\eta^-}\g_\alpha, \hspace{2mm} \mf{n}^\eta = \bigoplus_{\alpha \in \Sigma^+-\Sigma_\eta^+} \g_\alpha, \hspace{2mm} \mf{p}_\eta = \mf{l}_\eta \oplus \n^\eta.  
\]

Let $p_\eta:Z(\mf{l}_\eta)\rightarrow U(\h)$ be the Harish-Chandra homomorphism of $U(\mf{l}_\eta)$. For each $\lambda \in \h^\ast$, denote by $\chi^\lambda_\eta: Z(\mf{l}_\eta)\rightarrow \C$, $z \mapsto (\lambda \circ p_\eta)(z)$ the corresponding infinitesimal character. We have $\chi^\lambda_\eta = \chi_\eta^\mu$ if and only if $\mu \in W_\eta \cdot \lambda$. Let $\xi_\eta:\h^* \rightarrow \Max{Z(\mf{l}_\eta)}$, $\lambda \mapsto \ker{\chi^\lambda_\eta}$ the map associating elements of $\h^*$ to maximal ideals in $Z(\mf{l}_\eta)$. 

From the data $(\lambda, \eta) \in \h^* \times \ch{\mf{n}}$, we construct a $U(\mathfrak{l}_\eta)$-module
\begin{equation}
    \label{nondegenerate whittaker}
    Y(\lambda, \eta):=U(\mf{l}_\eta)  \otimes_{Z(\mf{l}_\eta) \otimes U(\mf{n}_\eta)} \mathbb{C}_{\chi^\lambda_\eta, \eta}.
\end{equation}

Here $\C_{\chi^\lambda_\eta, \eta}$ is the one-dimensional $Z(\mf{l}_\eta) \otimes U(\mf{n}_\eta)$-module with action 
\[u \otimes x \cdot z = \chi^\lambda_\eta(u)\eta(x)z\]
for $u \in Z(\mf{l}_\eta)$, $x \in U(\mf{n}_\eta)$, $z \in \C$. By construction, we have $Y(\lambda, \eta) \cong Y(\mu, \eta)$ if and only if $\mu \in W_\eta \cdot \lambda$. Here $\cdot$ denotes the ``dot action'': for $w \in W$ and $\lambda \in \h^*$, $w \cdot \lambda = w(\lambda + \rho) - \rho$, where $\rho =\frac{1}{2} \sum_{\alpha \in \Sigma^+} \alpha$. For any $\lambda \in \h^*$, $Y(\lambda, \eta)$ is an irreducible nondegenerate $\eta$-Whittaker module for $U(\mf{l}_\eta)$ \cite[\S2 Prop.  2.3]{McDowell}. 

Standard objects in the category $\mc{N}$ are constructed by parabolically inducing the irreducible $U(\mf{l}_\eta)$-modules $Y(\lambda, \eta)$.

\begin{definition} 
\label{standard Whittaker module}
For $(\lambda, \eta) \in \h^* \times \ch{n}$, define the $U(\g)$-module
\[
M(\lambda, \eta) := U(\g) \otimes_{U(\mf{p}_\eta)}Y(\lambda, \eta).
\]
Here $Y(\lambda, \eta)$ (\ref{nondegenerate whittaker}) is viewed as a $\mathcal{U}(\mf{p}_\eta)$-module via the natural morphism $\mf{p}_\eta \rightarrow \mf{l}_\eta$. We call the modules $M(\lambda, \eta)$ {\em standard Whittaker modules}.
\end{definition}

\begin{proposition}\cite[Prop. 2.4, Thm. 2.5, Thm. 2.9]{McDowell}
\label{whittaker facts}
The standard Whittaker module $M(\lambda, \eta)$ satisfies the following properties.
\begin{enumerate}[label=(\roman*)]
    \item Let $\xi: \h^* \rightarrow \Max Z(\g)$ be the map associating $\lambda \in \h^*$ to the kernel of the corresponding infinitesimal character $\chi^\lambda$. Then $M(\lambda, \eta) \in \mc{N}(\xi(\lambda), \eta)$.
    \item  Two modules $M(\lambda, \eta)$ and  $M(\mu, \eta)$ are isomorphic if and only if $\mu \in W_\eta \cdot \lambda$.
    \item The module $M(\lambda, \eta)$ is an $\eta$-Whittaker module generated by the Whittaker vector $\omega =1 \otimes 1 \otimes 1$.
    \item The center $\mf{z}$ of the reductive Lie algebra $\mf{l}_\eta$ is $\mf{z}=\{h \in \mf{h} \mid \alpha(h) = 0, \alpha \in \Pi_\eta\}$. For $\lambda \in \h^*$, we denote by $\lambda_\mf{z}$ the restriction of $\lambda$ to $\mf{z}$. There is a natural partial order on $\mf{z}^*$ obtained from the partial order on $\mf{h}^*$. The Lie algebra $\mf{z}$ acts semisimply on $M(\lambda, \eta)$, and $M(\lambda, \eta)$ decomposes into $\mf{z}$-weight spaces
    \[
    M(\lambda, \eta)=\bigoplus_{\nu_\mf{z} \leq \lambda_\mf{z}} M(\lambda, \eta)_{\nu_\mf{z}}.
    \]
    Each $\mf{z}$-weight space $M(\lambda, \eta)_{\nu_\mf{z}}$ is a $U(\mf{l}_\eta)$-module. Furthermore, as $U(\mf{l}_\eta)$-modules,  $M(\lambda, \eta)_{\lambda_\mf{z} }\cong Y(\lambda, \eta)$ and $M(\lambda, \eta)_{\nu_\mf{z}}\cong U(\bar{\mf{n}}^\eta)_{\mu_\mf{z}}\otimes_\mathbb{C}Y(\lambda, \eta)$, where  $\nu_\mf{z} = \mu_\mf{z} + \lambda_{\mf{z}}$ and $\mu_\mf{z}\leq0$ is a $\mf{z}$-weight of $U(\bar{\mf{n}}^\eta).$
    \item $M(\lambda, \eta)$ has a unique irreducible quotient, denoted $L(\lambda, \eta)$. All irreducible objects in $\mc{N}$ appear as such quotients. 
\end{enumerate}
\end{proposition}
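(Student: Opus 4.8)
The plan is to derive everything from the parabolic induction $M(\lambda,\eta)=U(\g)\otimes_{U(\mf{p}_\eta)}Y(\lambda,\eta)$ and the structure of the nondegenerate Whittaker $U(\mf{l}_\eta)$-module $Y(\lambda,\eta)$ recalled above --- in particular its irreducibility, and Kostant's classification of simple nondegenerate Whittaker modules over the reductive Levi $\mf{l}_\eta$. I would start with (iii). Since $\eta$ is a Lie algebra homomorphism it factors through $\n/[\n,\n]$, so $\eta|_{\g_\alpha}=0$ for every simple root $\alpha\notin\Pi_\eta$; thus $\eta|_{\mf{n}^\eta}=0$, and since $\mf{n}^\eta$ maps to $0$ under $\mf{p}_\eta\to\mf{l}_\eta$ while the cyclic vector $1\otimes 1$ of $Y(\lambda,\eta)$ is a Whittaker vector for $\mf{n}_\eta$ of type $\eta|_{\mf{n}_\eta}$, the vector $\omega=1\otimes 1\otimes 1$ is a Whittaker vector of type $\eta$ for $\n=\mf{n}_\eta\oplus\mf{n}^\eta$. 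It generates $M(\lambda,\eta)$ because $Y(\lambda,\eta)$ is $U(\mf{l}_\eta)$-cyclic and $U(\g)=U(\bar{\mf{n}}^\eta)\,U(\mf{p}_\eta)$ by PBW; in particular $M(\lambda,\eta)$ is finitely generated.

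The heart of the argument is (iv). PBW gives a $U(\bar{\mf{n}}^\eta)$-linear isomorphism $M(\lambda,\eta)\cong U(\bar{\mf{n}}^\eta)\otimes_\C Y(\lambda,\eta)$, and commuting $x\in\mf{l}_\eta$ past $U(\bar{\mf{n}}^\eta)$ via the derivation $\ad(x)$ --- which preserves $U(\bar{\mf{n}}^\eta)$ because $[\mf{l}_\eta,\bar{\mf{n}}^\eta]\subseteq\bar{\mf{n}}^\eta$ --- upgrades it to an isomorphism of $\mf{l}_\eta$-modules, with $\mf{l}_\eta$ acting on $U(\bar{\mf{n}}^\eta)$ by the adjoint action and on $Y(\lambda,\eta)$ as given. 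Since $U(\mf{z})\subseteq Z(\mf{l}_\eta)$, the center $\mf{z}$ acts on $Y(\lambda,\eta)$ by a single scalar, which one identifies with $\lambda_\mf{z}$ using that the Harish-Chandra homomorphism $p_\eta$ is the identity on $U(\mf{z})$ up to a shift vanishing on $\mf{z}$; and $\mf{z}$ acts on $U(\bar{\mf{n}}^\eta)$ with finite-dimensional weight spaces $U(\bar{\mf{n}}^\eta)_{\mu_\mf{z}}$ lying in a pointed cone of nonpositive weights, with $U(\bar{\mf{n}}^\eta)_0=\C$. Splitting the tensor product along this $\mf{z}$-grading produces exactly the decomposition of (iv), with $M(\lambda,\eta)_{\nu_\mf{z}}\cong U(\bar{\mf{n}}^\eta)_{\mu_\mf{z}}\otimes_\C Y(\lambda,\eta)$ as $\mf{l}_\eta$-modules for $\nu_\mf{z}=\mu_\mf{z}+\lambda_\mf{z}$, and $M(\lambda,\eta)_{\lambda_\mf{z}}\cong Y(\lambda,\eta)$.

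Parts (i) and (ii) then follow. For (i): the module is finitely generated by (iii); the $\mf{z}$-grading is bounded above, so $\mf{n}^\eta$, which strictly raises $\mf{z}$-weight, acts locally nilpotently, while on $U(\bar{\mf{n}}^\eta)\otimes Y(\lambda,\eta)$ each $x\in\mf{n}_\eta$ acts as the commuting sum of $\ad(x)\otimes 1$ (locally nilpotent, $U(\bar{\mf{n}}^\eta)$ being a direct sum of finite-dimensional $\mf{l}_\eta$-modules) and $1\otimes x$ (acting by $\eta(x)$ plus a locally nilpotent operator, as $Y(\lambda,\eta)$ is an $\eta$-Whittaker module); hence $\ker\eta\subseteq U(\n)$ acts locally nilpotently and, since $U(\n)/(\ker\eta)^k$ is finite-dimensional, $M(\lambda,\eta)$ is locally $U(\n)$-finite. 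The $Z(\g)$-action on the induced module is given by the twisted Harish-Chandra homomorphism $\gamma\colon Z(\g)\to Z(\mf{l}_\eta)$ attached to $\mf{p}_\eta$, so $z\in Z(\g)$ acts on $Y(\lambda,\eta)$ by the scalar $\chi^\lambda_\eta(\gamma(z))=\chi^\lambda(z)$; thus $M(\lambda,\eta)$ is annihilated by $\xi(\lambda)$ and lies in $\mc{N}(\xi(\lambda),\eta)$. For (ii), if $\mu\in W_\eta\cdot\lambda$ then $Y(\mu,\eta)\cong Y(\lambda,\eta)$ and inducing gives $M(\mu,\eta)\cong M(\lambda,\eta)$; conversely any $\g$-isomorphism $M(\lambda,\eta)\xrightarrow{\ \sim\ }M(\mu,\eta)$ is $\mf{z}$-equivariant, hence carries the unique maximal $\mf{z}$-weight space of the source onto that of the target, forcing $\lambda_\mf{z}=\mu_\mf{z}$ and $Y(\lambda,\eta)\cong Y(\mu,\eta)$ as $\mf{l}_\eta$-modules, whence $\mu\in W_\eta\cdot\lambda$.

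For (v), every submodule $N\subseteq M(\lambda,\eta)$ is $\mf{z}$-graded, so $N\cap M(\lambda,\eta)_{\lambda_\mf{z}}$ is an $\mf{l}_\eta$-submodule of the irreducible module $Y(\lambda,\eta)$ and hence is $0$ or everything; the latter forces $\omega\in N$ and $N=M(\lambda,\eta)$, so the sum of all proper submodules is proper and $M(\lambda,\eta)$ has a unique maximal submodule and a unique irreducible quotient $L(\lambda,\eta)$. For the last assertion, a simple $L\in\mc{N}$ lies by Proposition \ref{prop:Ndefinition} in some $\N(\widehat{I},\eta')$, so $\ker\eta'$ acts locally nilpotently and $L$ contains a nonzero Whittaker vector $w$ of type $\eta'$; then $L=U(\g)w$ is a simple $\eta'$-Whittaker module with some central character $\chi^\lambda$, the submodule $U(\mf{l}_{\eta'})w$ lies in the $\mf{l}_{\eta'}$-stable subspace of $\mf{n}^{\eta'}$-invariants of $L$, and by Kostant's classification that subspace contains a copy of some $Y(\lambda',\eta')$; Frobenius reciprocity turns this inclusion into a nonzero --- hence surjective --- map $M(\lambda',\eta')\to L$, so $L\cong L(\lambda',\eta')$. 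The hardest step is this last one: reducing a simple object of $\mc{N}$ to a quotient of a standard module needs, beyond the formal Frobenius reciprocity argument, Kostant's structure theory for nondegenerate Whittaker modules over $\mf{l}_{\eta'}$ (existence of Whittaker vectors, and the bijection between simple nondegenerate Whittaker $U(\mf{l}_{\eta'})$-modules and central characters of $Z(\mf{l}_{\eta'})$); everything else is bookkeeping with PBW and the $\mf{z}$-grading of (iv).
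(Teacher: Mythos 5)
The paper itself does not prove this proposition: it is quoted from McDowell \cite{McDowell} (Prop.\ 2.4, Thm.\ 2.5, Thm.\ 2.9), so your attempt has to be measured against those original arguments, which it follows in outline --- parabolic induction from Kostant's simple nondegenerate Whittaker modules over $\lev_\eta$, the PBW identification $M(\lambda,\eta)\cong U(\bar{\n}^\eta)\otimes_\C Y(\lambda,\eta)$ as $\lev_\eta$-modules, and the resulting $\z$-weight analysis. Parts (ii)--(iv) and the uniqueness of the irreducible quotient in (v) are sound as sketched. The main problem is in (i): you deduce that $\ker\eta$ acts locally nilpotently from the local nilpotence of the individual operators $x-\eta(x)$, $x\in\n$, and only then obtain local $U(\n)$-finiteness from the finite-dimensionality of $U(\n)/(\ker\eta)^k$. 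That first inference is not justified: element-wise local nilpotence does not formally give annihilation of each vector by a power of the two-sided ideal $\ker\eta$, and the standard way to upgrade it (Lie's theorem on a finite-dimensional $U(\n)$-stable subspace, which forces the only $\n$-character occurring to be $\eta$) presupposes exactly the local $U(\n)$-finiteness you are trying to establish, so as written the argument is circular. The repair is the classical one, available already from your part (iii): since $\n U_p(\g)\subseteq U_p(\g)+U_p(\g)\n$, one gets $U(\n)U_p(\g)\omega\subseteq U_p(\g)U(\n)\omega=U_p(\g)\omega$, a finite-dimensional space, which gives local $U(\n)$-finiteness directly; your element-wise nilpotence then combines with Lie's theorem to give $(\ker\eta)^k$-annihilation on each such piece, and together with annihilation by $\xi(\lambda)$ this places $M(\lambda,\eta)$ in $\N(\xi(\lambda),\eta)$ as in Proposition \ref{prop:Ndefinition}.

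A second, smaller gap is in the last assertion of (v). From the Whittaker vector $w$ of type $\eta'$ you correctly get $U(\lev_{\eta'})w$ inside the $\n^{\eta'}$-invariants, but the claim that this space ``contains a copy of some $Y(\lambda',\eta')$ by Kostant's classification'' does not follow yet: $U(\lev_{\eta'})w$ is a cyclic nondegenerate Whittaker module for $\lev_{\eta'}$ which a priori has no $Z(\lev_{\eta'})$-infinitesimal character, and Kostant's classification applies only once one is present. You need to produce a $Z(\lev_{\eta'})$-eigenvector: for instance, $Z(\g)$ acts on the simple module $L$ by a character (Dixmier), $Z(\lev_{\eta'})$ is a finite module over the image of $Z(\g)$ under the relative Harish-Chandra homomorphism, so $Z(\lev_{\eta'})w$ is finite-dimensional and contains an eigenvector $w'$, which is still a Whittaker vector of type $\eta'$ annihilated by $\n^{\eta'}$; then $U(\lev_{\eta'})w'\cong Y(\lambda',\eta')$ by Kostant, and your Frobenius reciprocity argument finishes the proof. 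With these two repairs your write-up is essentially McDowell's proof.
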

\begin{remark}
If $\lambda+\rho$ is regular, there is a unique element $\mu \in W_\eta \cdot \lambda$ such that $\mu+\rho$ is dominant with respect to $\Sigma_\eta^+$; that is, $\alpha^\vee(\mu+\rho) \geq 0$ for all $\alpha \in \Sigma_\eta^+$. For the remainder of the paper, unless otherwise stated, we assume that $\mu$ is chosen to be this unique dominant element when we write $M(\mu, \eta)$. (Proposition \ref{whittaker facts}(ii) guarantees that such a choice can be made.)  

\end{remark}
\begin{definition}
\label{nondegenerate}
We say that a character $\eta\in \ch{\mf{n}}$ is {\em nondegenerate} if $\Pi_\eta = \Pi$. We say a Whittaker module is {\em nondegenerate} if it is an $\eta$-Whittaker module for a nondegenerate character $\eta$. 
\end{definition}

\begin{remark}
 If $\eta = 0$, then $\mf{l}_\eta = \mf{h}$ and $M(\lambda, \eta)$ is the Verma module of highest weight $\lambda$ (which we denote by $M(\lambda)$). If $\eta$ is nondegenerate, then $\mf{l}_\eta = \mf{g}$ and $M(\lambda, \eta) = Y(\lambda, \eta)$ is irreducible.  
\end{remark}
\subsection{Whittaker functors}\label{sec:Whittaker-functors}

Given a $U(\g)$-module $X$, let $(X)_\eta$ denote the space of $\eta$-twisted $U(\n)$-finite vectors in $X$:
\begin{equation}
    \label{eta twisted n-finite vectors}
    (X)_\eta := \{ x\in X:\text{$\forall u\in \n$, $\exists$ $k$ s.t. }(u-\eta(u))^kx=0\}. 
\end{equation}

For a $U(\g)$-module $X$ in category $\mc{O}$\footnote{Category $\mc{O}$ is the category of $U(\g)$-modules which are finitely generated, $\h$-semisimple, and locally $U(\n)$-finite. For $\lambda \in \mf{h}^*$, denote by $\mc{O}_\lambda$ the subcategory of $\mc{O}$ consisting of modules whose composition factors are isomorphic to $L(w \cdot \lambda)$ for $w \in W_\lambda$. Here  for $\mu \in \mf{h}^*$, $L(\mu)$ denotes the unique irreducible quotient of $M(\mu)$ and $W_\lambda$ is the integral Weyl group of $\lambda$.}, denote by $\overline{X}$ the formal completion; i.e. if $X=\bigoplus_{\lambda \in \h^*} X_\lambda$, then $\overline{X} = \prod_{\lambda \in \h^*} X_\lambda$. Denote by $\overline{\Gamma}_\eta(X):= (\overline{X})_\eta$. In~\cite[\S3]{Backelin} it is shown that $\overline{\Gamma}_\eta$ defines an exact functor 
\begin{align*}
\overline{\Gamma}_\eta: \mc{O}_\lambda \rightarrow \mc{N}(\xi(\lambda),\eta)
\end{align*}
 for any $\lambda\in\h^\ast$. We refer to $\overline{\Gamma}_\eta$ as a {\em Whittaker functor}.

\begin{proposition}[{\cite[Prop. 6.9]{Backelin}}]\label{prop:Whittaker-functor}
Let $\lambda\in\h^\ast$. For each $w\in W_\eta$ 
    \[
\overline{\Gamma}_\eta (M(w\cdot\lambda))= M(\lambda,\eta). 
\]

\end{proposition}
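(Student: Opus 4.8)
The plan is to compute $\overline{\Gamma}_\eta(M(w\cdot\lambda))$ directly by analyzing the weight-space structure. Recall that a Verma module $M(w\cdot\lambda)$ decomposes into $\h$-weight spaces, each finite-dimensional, so the formal completion $\overline{M(w\cdot\lambda)}$ is the direct product of these weight spaces. The functor $\overline{\Gamma}_\eta$ extracts the $\eta$-twisted $U(\n)$-finite vectors. The key observation is that since $M(w\cdot\lambda)$ is a free $U(\bar\n)$-module of rank one on its highest weight vector $v_{w\cdot\lambda}$, we should identify $\overline{M(w\cdot\lambda)}$ with a completion of $U(\bar\n)$ and understand which elements become $\eta$-Whittaker vectors after completing.

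First I would set up the comparison target. By Proposition~\ref{whittaker facts}(iv), the standard Whittaker module $M(\lambda,\eta)$ decomposes into $\mf{z}$-weight spaces $M(\lambda,\eta)_{\nu_\mf{z}} \cong U(\bar\n^\eta)_{\mu_\mf{z}} \otimes_\C Y(\lambda,\eta)$, and as a $U(\bar\n^\eta)$-module $M(\lambda,\eta) \cong U(\bar\n^\eta) \otimes_\C Y(\lambda,\eta)$. Meanwhile, by PBW, $M(w\cdot\lambda) \cong U(\bar\n^\eta) \otimes_\C U(\bar\n_\eta) \cdot v_{w\cdot\lambda}$ as a vector space, and the inner factor $U(\bar\n_\eta)\cdot v_{w\cdot\lambda}$ is (up to the dot-action shift matching $w\cdot\lambda$ with the dominant representative) a Verma module for the Levi $\mf{l}_\eta$. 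So the claim reduces to two pieces: (a) applying the analogous Whittaker-type functor for $\mf{l}_\eta$ to the $\mf{l}_\eta$-Verma module $M_{\mf{l}_\eta}(w\cdot\lambda)$ produces the irreducible nondegenerate Whittaker module $Y(\lambda,\eta)$ — this is exactly the statement, restricted to the nondegenerate case, that a suitable completion-and-twist of a Verma module over $\mf{l}_\eta$ recovers $Y(\lambda,\eta)$, which can be checked by hand since $Y(\lambda,\eta)$ is a quotient of $M_{\mf{l}_\eta}(w\cdot\lambda)$ with the $\eta$-Whittaker vector appearing after completion; and (b) the $U(\bar\n^\eta)$-direction is untouched by the twisting (since $\eta$ vanishes on $\mf{n}^\eta$ and the $\mf{z}$-grading is preserved), so $\overline{\Gamma}_\eta$ intertwines the $U(\bar\n^\eta)$-module structures. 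Combining (a) and (b) and checking that the $U(\g)$-action is the induced one gives $\overline{\Gamma}_\eta(M(w\cdot\lambda)) = U(\g)\otimes_{U(\p_\eta)} Y(\lambda,\eta) = M(\lambda,\eta)$.

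Alternatively — and this is probably the cleaner route to write up — I would use the results already available: $\overline{\Gamma}_\eta$ is exact and sends $\mc{O}_\lambda$ to $\mc{N}(\xi(\lambda),\eta)$, and one knows (or can cite from \cite{Backelin}) that $\overline{\Gamma}_\eta$ kills $M(\mu)$ unless the dominant-chamber condition relating $\mu$ and $\eta$ is satisfied, while on the "right" Vermas it is essentially a completion. Concretely: show $\overline{\Gamma}_\eta(M(w\cdot\lambda))$ is a nonzero $\eta$-Whittaker module with infinitesimal character $\chi^\lambda$, identify its $\mf{z}$-weight that is maximal (which must be $\lambda_\mf{z}$ by the weight combinatorics of $M(w\cdot\lambda)$ together with $w\in W_\eta$, so $w$ fixes $\lambda_\mf{z}$), show the maximal $\mf{z}$-weight space is an irreducible $\mf{l}_\eta$-module hence $\cong Y(\lambda,\eta)$, and then use that $M(\lambda,\eta)$ is the projective cover / universal $\eta$-Whittaker module with that maximal $\mf{z}$-weight space in $\mc{N}(\xi(\lambda),\eta)$ to get a surjection $M(\lambda,\eta)\twoheadrightarrow \overline{\Gamma}_\eta(M(w\cdot\lambda))$; finally compare characters (both have the same formal character, computable from Proposition~\ref{whittaker facts}(iv) and exactness of $\overline{\Gamma}_\eta$ applied to a BGG-type resolution, or just from dimensions of $\mf{z}$-weight spaces) to conclude the surjection is an isomorphism.

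The main obstacle I anticipate is controlling the completion carefully enough to see that $\overline{\Gamma}_\eta(M(w\cdot\lambda))$ is actually finitely generated and exactly of the size of $M(\lambda,\eta)$ — i.e. that completing does not introduce "too many" Whittaker vectors (the surplus-of-forms phenomenon flagged in the introduction is a warning that naive dualization behaves badly, though here we are completing rather than dualizing). Establishing that the maximal $\mf{z}$-weight space of $\overline{\Gamma}_\eta(M(w\cdot\lambda))$ is precisely $Y(\lambda,\eta)$ and not some larger $\eta$-Whittaker $\mf{l}_\eta$-module, and that lower $\mf{z}$-weight spaces match $U(\bar\n^\eta)_{\mu_\mf{z}}\otimes Y(\lambda,\eta)$ on the nose, is the crux; the condition $w\in W_\eta$ is what makes this work, since it guarantees $w\cdot\lambda$ and $\lambda$ have the same restriction to $\mf{z}$ and differ only by the $\mf{l}_\eta$-dot-action, so all the subtlety is confined to the Levi where the nondegenerate theory of \cite{McDowell} applies.
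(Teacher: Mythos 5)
The paper offers no internal proof of this statement: Proposition \ref{prop:Whittaker-functor} is imported verbatim from \cite[Prop.~6.9]{Backelin}, so the only comparison available is with Backelin's own argument, which your first route broadly parallels (PBW separation of the $\bar{\n}^\eta$- and $\bar{\n}_\eta$-directions, reduction of the twist to the nondegenerate theory on the Levi $\lev_\eta$, where \cite{Kostant} and \cite{McDowell} apply). As an outline this is the right shape, but both of your routes leave the crux unargued --- and it is the crux, as you yourself flag. In step (a) you say the nondegenerate Levi case ``can be checked by hand'': the existence of an $\eta$-Whittaker vector in the completed Levi Verma module is Kostant, but the actual content is the upper bound --- that the $\eta$-twisted $U(\n_\eta)$-finite part of the completion is exactly the cyclic module $Y(\lambda,\eta)$ generated by that vector, with no further composition factors and finitely generated. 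In step (b), the assertion that the $U(\bar{\n}^\eta)$-direction is ``untouched'' needs a real argument: one must show that $\eta$-twisted local $\n$-finiteness forces an element of the full $\h$-weight completion to have nonzero components in only finitely many $\z$-weights, each lying in $U(\bar{\n}^\eta)\otimes(\text{completed Levi Verma})$, and that the resulting space carries the parabolically induced $U(\g)$-structure. This is precisely the point where ``completion introduces too many vectors'' could fail, and neither sketch addresses it.

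Your alternative route has an additional soft spot. A nonzero map $M(\lambda,\eta)\rightarrow\overline{\Gamma}_\eta(M(w\cdot\lambda))$ does follow from the universal property of parabolic induction once the top $\z$-weight space of the target is identified with $Y(\lambda,\eta)$ and killed by $\n^\eta$, but surjectivity does not follow from any ``projective cover'' property --- you would first need to know the target is cyclically generated by that weight space, which is essentially the statement being proved. The proposed character comparison is also delicate: the relevant ``character'' must be taken in the Grothendieck group of $\lev_\eta$-modules ($\z$-weight spaces are infinite-dimensional), and computing $[\overline{\Gamma}_\eta(M(w\cdot\lambda))]$ via exactness requires knowing $\overline{\Gamma}_\eta$ on simple highest weight modules, which is another theorem of \cite{Backelin} of comparable depth (used in Remark \ref{rmk:contravariant-pairings-via-whittaker-functors}), so that route is close to circular unless you prove that input as well.
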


\subsection{Twisted and untwisted Lie algebra (co)homology}

Our arguments in upcoming sections will make use of (twisted) Lie algebra (co)homology. 

\begin{definition}
\label{Lie algebra homology}
Let $X$ be a left $U(\g)$-module. The {\em $\bar{\mf{n}}$-homology} of $X$ is 
\[
H_k(\bar{\mf{n}},X) := \text{Tor}_k^{U(\bar{\mf{n}})}(\C, X),
\]
where $\C$ is the trivial right $U(\bar{\mf{n}})$-module. We are primarily interested in the degree zero $\bar{\mf{n}}$-homology: 
\begin{align*}
H_0(\bar{\mf{n}}, X) &= \C \otimes_{U(\bar{\mf{n}})} X \\
&= X/\bar{\mf{n}}X 
\end{align*}
We refer to $H_0(\bar{\mf{n}},X)$ as the \emph{$\bar{\mf{n}}$-coinvariants of $X$}.
\end{definition}

The vector space $X/\bar{\mf{n}}X$ has a natural structure of an $\h$-module, so degree zero $\bar{\mf{n}}$-homology defines a right exact covariant functor 
\[
H_0(\bar{\mf{n}}, -): U(\g)\text{-mod} \rightarrow U(\h) \text{-mod}.
\]

\begin{definition}
\label{twisted Lie algebra cohomology}
Let $X$ be a left $U(\n)$-module and $\eta\in\ch \n$. The $\eta$-twisted $\n$-cohomology of $X$ is defined to be 
\[
H_\eta^k(\n,X):=\text{Ext}^k_{U(\n)}(\C_\eta, X). 
\]
\end{definition}
The $\eta$-twisted $\n$-cohomology of $X$ in degree 0 is the subspace of Whittaker vectors in $X$:
\begin{align*}
H^0_\eta(\n,X)& = \Hom_{U(\n)}(\mathbb{C}_\eta,X)\\
&=\{x\in X:(u-\eta(u))x=0\quad\forall u\in U(\n)\}.
\end{align*}
When $\eta=0$, we refer to $\eta$-twisted $\n$-cohomology as just $\n$-cohomology and drop $\eta$ from our notation:
\[
H^k(\mf{n}, X) := H^k_0(\mf{n}, X) = \Ext_{U(\mf{n})}^k(\C, X).
\]
Here $\C$ is the trivial representation of $U(\n)$.


\section{Geometric Preliminaries}
\label{geometric preliminaries}

Our geometric setting is a category of twisted equivariant $\mc{D}$-modules on the flag variety of $\g$, which we refer to as {\em twisted Harish-Chandra sheaves}. In \cite[ \S1]{TwistedSheaves}, Mili\v{c}i\'{c}--Soergel establish that these twisted sheaves correspond to blocks of category $\mc{N}$ under Beilinson--Bernstein localization. In this section, we introduce this geometric category and list some basic properties of twisted Harish-Chandra sheaves. 

\subsection{Twisted Harish-Chandra sheaves} Fix $\lambda \in \mf{h}^*$ and $\eta \in \ch \n$. Let $X$ be the flag variety of $\mf{g}$ and $\mc{D}_\lambda$ the $\lambda$-twisted sheaf of differential operators on $X$ \cite[Ch. 1, \S 2]{Localization}. Let $\ell:W \rightarrow \mathbb{N}$ be the length function on the Weyl group of $\mf{g}$. The action of the group $N = \Int{\mf{n}}$ stratifies $X$ into Bruhat cells $C(w)\cong \A^{\ell (w)}$ parameterized by elements $w \in W$. 

\begin{definition}\label{def:HCsheaf}  (\cite[\S1]{TwistedSheaves}. See also \cite[App. B]{HMSWI} and \cite[\S4]{MP}.)
An {\em $\eta$-twisted Harish-Chandra sheaf} for the Harish-Chandra pair $(\mf{g},N)$ is a coherent $\mc{D}_\lambda$-module $\mc{V}$  satisfying the following conditions: 
\begin{enumerate}[label=(\roman*)]
    \item $\mc{V}$ is $N$-equivariant as an $\mc{O}_X$-module \cite[Ch.1 \S 3 Def. 1.6]{Mumford};
    \item the action morphism $\mc{D}_\lambda \otimes_{\mc{O}_X} \mc{V} \rightarrow \mc{V}$ is a morphism of $N$-equivariant $\mc{O}_X$-modules;
    \item the actions of $\mc{D}_\lambda$ and $N$ differ by $\eta$; i.e. for all $x \in \mf{n}$
    \[
    \pi(x) - \mu(x) = \eta(x),
    \]
    where $\pi$ is the action on $\mc{V}$ induced by the map $\mf{n} \rightarrow U(\mf{g}) \rightarrow \mc{D}_\lambda$ and $\mu$ is the differential of the $N$-action. 
\end{enumerate}
\end{definition}
Note that condition (i) involves extra data on the $\mc{D}_\lambda$-module and conditions (ii) and (iii) are assumptions. A morphism of $\eta$-twisted Harish-Chandra sheaves is a $\mc{D}_\lambda$-module morphism which is also a morphism of $N$-equivariant $\mc{O}_X$-modules. We denote by $\mc{M}_{\text{coh}}(\mc{D}_\lambda, N, \eta)$ the category of $\eta$-twisted Harish-Chandra sheaves. Because $N$ is connected, any $\mc{O}_X$-module can only have one possible $N$-equivariant structure, so we can consider the category $\mc{M}_{\text{coh}}(\mc{D}_\lambda, N, \eta)$ as a full subcategory of the category of $\mc{D}_\lambda$-modules.

\subsection{Standard and simple twisted Harish-Chandra sheaves}
\label{sec:standard-and-simple-HC-sheaves}
Within the category $\mc{M}_{\text{coh}}(\mc{D}_\lambda, N, \eta)$ there are standard, costandard, and simple objects parameterized by $W_\eta \backslash W$. They are constructed as follows. (See \cite[\S3]{TwistedSheaves} and \cite[\S3.1]{Romanov} for more details.) For a coset $C\in W_\eta\backslash W$, let $w^C$ be the unique longest element of $C$ \cite[Ch.6, \S1]{Localization}, and $i_{w^C}:C(w^C)\rightarrow X$ be the inclusion of the Bruhat cell $C(w^C)$ into the flag variety. There is a unique irreducible connection on $C(w^C)$ satisfying the compatibility condition (iii) in Definition \ref{def:HCsheaf}; we denote it by $\mc{O}_{C(w^C), \eta}$.  As an $N$-equivariant $\mc{O}_X$-module, $\mc{O}_{C(w^C), \eta}$ is isomorphic to $\mc{O}_X$, but the $\mc{D}_\lambda$-module structure is twisted by $\eta$. 
\begin{definition}
\label{def:standard-sheaf}
For each coset $C \in W_\eta \backslash W$, we define the corresponding {\em standard\footnote{Note that the terminology here differs from \cite{TwistedSheaves, Romanov}, where the $!$-pushforward is called costandard and the $+$-pushforward standard. We chose the opposite terminology in this paper so that the global sections of (co)standard $\eta$-twisted Harish-Chandra sheaves are (co)standard Whittaker modules, which seems to us more natural.}  $\eta$-twisted Harish-Chandra sheaf}
\[
\mc{M}(w^C,\lambda,\eta) := i_{w^C!}(\cO_{C(w^C),\eta}),
\]
and {\em costandard $\eta$-twisted Harish-Chandra sheaf}
\[
\mc{I}(w^C,\lambda,\eta) := i_{w^C+}(\cO_{C(w^C),\eta}).
\]
The $!$-pushforward functor $i_{w^C!}$ is defined by pre- and post- composing $i_{w^C+}$ with holonomic duality (see \cite[Def. 5]{Romanov} and more generally \cite[App. A.2]{Romanov} for conventions with $\mc{D}$-module functors).
\end{definition}

The sheaf $\mc{M}(w^C, \lambda, \eta)$ has a unique irreducible quotient, which we denote by $\mc{L}(w^C, \lambda, \eta)$ \cite[Prop. 3]{Romanov}. The sheaf $\mc{L}(w^C, \lambda, \eta)$ is isomorphic to the unique irreducible submodule of $\mc{I}(w^C, \lambda, \eta)$. All simple objects in $\mc{M}_{\text{coh}}(\mc{D}_\lambda, N, \eta)$ occur in this way \cite[\S3]{TwistedSheaves}. 
\begin{remark}
 The $\eta$-twisted connection $\mc{O}_{C(w^C), \eta}$ can also be described in terms of exponential $\mc{D}$-modules. Let $\mathbb{G}_a \cong \mathbb{A}^1$ be the additive group. The {\em exponential $\mc{D}$-module} on $\mathbb{G}_a$ is
\[
\mathrm{exp}:=D_{\mathbb{G}_a}/D_{\mathbb{G}_a}(\partial - 1),
\]
where $D_{\mathbb{G}_a}$ denotes global differential operators on $\mathbb{A}^1$ (the Weyl algebra), generated by $\partial$ and $z$. Corresponding to a Lie algebra character $\eta: \mf{n} \rightarrow \C$ is a group character $\eta:N \rightarrow \mathbb{G}_a$ which we call by the same name. For certain Bruhat cells, we can use $\eta$ to construct an exponential $\mc{D}$-module on the Bruhat cell. In particular, if $C(w)$ has the property that $\eta|_{\mathrm{stab}_N x} = 1$ for all $x \in C(w)$, then $\eta$ factors through the quotient $N/\mathrm{stab}_N x \cong C(w)$, 
\[
\begin{tikzcd}
N \arrow[r] \arrow[rr, bend left, "\eta"] & N/ \mathrm{stab}_N x \arrow[r, "\bar{\eta}"] & \mathbb{G}_a
\end{tikzcd}
\]
so we can define a $\mc{D}$-module $\bar{\eta}^! \mathrm{exp}$ on $C(w)$. It turns out that $\eta|_{\mathrm{stab}_n x} = 1$ for $x \in C(w)$ if and only if $w=w^C$ is the longest coset representative for some coset $C \in W_\eta \backslash W$ (see proof of Lemma 4.1 in \cite{TwistedSheaves}). The $\mc{D}_{C(w)}$-modules constructed in this way are exactly the $\eta$-twisted connections $\mc{O}_{C(w), \eta}$ for $\lambda = - \rho$. 
\end{remark}

The relationship between standard, costandard, and simple twisted Harish-Chandra sheaves can be described in terms of six functor formalism on derived categories of $\mc{D}$-modules. Let $D:=D^b(\mc{M}_{qc}(\mc{D}_\lambda))$ be the bounded derived category of quasi-coherent $\mc{D}_\lambda$-modules on $X$, and $D_{w^C}:=D^b(\mc{M}_{qc}(\mc{D}_\lambda^{i_{w^C}}))$ be the bounded derived category of quasi-coherent $\mc{D}_\lambda^{i_{w^C}}$-modules on $C(w^C)$. (Here $\mc{D}_\lambda^{i_{w^C}}$ is the twisted sheaf of differential operators on $C(w^C)$ obtained by pulling back $\mc{D}_\lambda$ via $i_{w^C}$, see \cite[Ch. 1 \S1]{Localization}. These are $\lambda$-twisted differential operators on the Bruhat cell $C(w)$.) The $!$-pushforward functor 
\[
i_{w^C!}:D_{w^C} \rightarrow D
\]
is a left adjoint to the restriction functor 
\[
i_{w^C}^\bullet:D \rightarrow D_{w^C}.
\]
Hence  
\begin{align*}
\Hom_D(\mc{M}(w^C, \lambda, \eta), \mc{I}(w^C, \lambda, \eta)) &=\Hom_{D}(i_{w^C!}\mc{O}_{C(w^C), \eta}, i_{w^C+}\mc{O}_{C(w^C), \eta}) \\
&=\Hom_{D_{w^C}}(\mc{O}_{C(w^C), \eta}, i_{w^C}^\bullet i_{w^C+} \mc{O}_{C(w^C), \eta})\\
&= \Hom_{D_{w^C}}(\mc{O}_{C(w^C), \eta}, \mc{O}_{C(w^C), \eta)}) \\
&= \C. 
\end{align*}
(Here we are writing $\mc{D}$-modules as objects in the derived category by considering them as complexes concentrated in degree $0$.) 
This guarantees that there is a canonical morphism 
\begin{equation}
    \label{canonical morphism}
    \mc{M}(w^C, \lambda, \eta) \rightarrow \mc{I}(w^C, \lambda, \eta)
\end{equation}
in $\mc{M}_{\text{coh}}(\mc{D}_\lambda, N, \eta)$. The image of this morphism is the irreducible module $\mc{L}(w^C, \lambda, \eta) \subset \mc{I}(w^C, \lambda, \eta)$. 

\subsection{Beilinson--Bernstein localization} 
The category of $\eta$-twisted Harish-Chandra sheaves is related to the category $\mc{N}$ via Beilinson--Bernstein localization. More precisely,  for $\lambda \in \h^*$ such that $\lambda + \rho$ is regular\footnote{We say $\mu \in \h^*$ is {\em regular} if $\alpha^\vee(\mu) \neq 0$ for all $\alpha \in \Sigma$.} and antidominant\footnote{We say $\mu \in \h^*$ is {\em antidominant} if $\alpha^\vee(\mu)\not\in \mathbb{Z}_{>0}$ for all $\alpha \in \Sigma$.} the Beilinson--Bernstein localization functor $\Delta_{\lambda+\rho}$ (defined by $\Delta_{\lambda + \rho}(V) = \mc{D}_{\lambda + \rho} \otimes_{U(\g)/\xi(\lambda)U(\g)} V$ for a $U(\g)/\xi(\lambda)U(\g)$-module $V$) provides an equivalence of categories:
\begin{equation}
\label{BB equivalence}
\Delta_{\lambda+\rho}:\mc{N}(\xi(\lambda), \eta) \xrightarrow{\sim} \mc{M}_{\text{coh}}(\mc{D}_{\lambda+\rho}, N, \eta)
\end{equation}
The inverse functor is given by global sections. The global sections of standard (resp. irreducible) $\eta$-twisted Harish-Chandra sheaves are standard (resp. irreducible) Whittaker modules.  

\begin{proposition}[{\cite[Thm. 9, Thm. 10]{Romanov}}]\label{prop:global-sections-of-standard} Recall that for a coset $C \in W_\eta \backslash W$, we denote by $w^C \in C$ the longest element. For $\lambda \in \h^*$ such that $\lambda + \rho$ is antidominant, 
\[
\Gamma(X,\mc{M}(w^C,\lambda +\rho,\eta))\cong M(w^C\cdot\lambda,\eta).
\]
If $\lambda+\rho \in \h^*$ is also regular, 
\[
\Gamma(X, \mc{L}(w^C, \lambda+\rho, \eta)) = L(w^C \cdot \lambda, \eta).
\]
\end{proposition}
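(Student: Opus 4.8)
The plan is to obtain both isomorphisms from Beilinson--Bernstein localization together with the universal property of standard Whittaker modules. Fix $\lambda \in \h^*$ with $\lambda+\rho$ antidominant; by the Beilinson--Bernstein theorem \cite{Localization}, $\Gamma(X,-)$ is exact on quasi-coherent $\mc{D}_{\lambda+\rho}$-modules, and condition (iii) of Definition \ref{def:HCsheaf} forces the global sections of any $\eta$-twisted Harish-Chandra sheaf to be a $U(\g)$-module on which $\n$ acts locally finitely with generalized eigenvalue $\eta$ and on which $Z(\g)$ acts through $\chi^\lambda$, so $\Gamma(X,\mc{M}(w^C,\lambda+\rho,\eta))\in\mc{N}(\xi(\lambda),\eta)$. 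Since $\mc{M}(w^C,\lambda+\rho,\eta)=i_{w^C!}\mc{O}_{C(w^C),\eta}$ is cyclically generated over $\mc{D}_{\lambda+\rho}$ by the canonical generator of the rank-one connection $\mc{O}_{C(w^C),\eta}$, its image $\omega$ under $\Gamma(X,-)$ generates $\Gamma(X,\mc{M}(w^C,\lambda+\rho,\eta))$ as a $U(\g)$-module.

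I would then identify the submodule generated by $\omega$ over the parabolic $\mf{p}_\eta$. Proceeding as in the Verma-module case \cite{HMSWI}, one analyzes the standard sheaf near the cell $C(w^C)$ using the $N$-orbit stratification of $X$ and the affineness of $C(w^C)$; the crucial input is that $w^C$ is the \emph{longest} element of its coset, so (as recalled in the remark after Definition \ref{def:standard-sheaf}) $\eta$ descends to $C(w^C)$ and $\mc{O}_{C(w^C),\eta}$ is the corresponding exponential connection. This shows $\omega$ is a type-$\eta$ Whittaker vector and that $U(\mf{p}_\eta)\omega\cong Y(w^C\cdot\lambda,\eta)$ as an $\mf{l}_\eta$-module, the infinitesimal character being pinned down by the $\mc{D}_{\lambda+\rho}$-module structure restricted to the fibre of $X\to G/P_\eta$ through the base point of $C(w^C)$. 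By Frobenius reciprocity for induction from $\mf{p}_\eta$ (Definition \ref{standard Whittaker module}) this yields a surjection $M(w^C\cdot\lambda,\eta)\twoheadrightarrow\Gamma(X,\mc{M}(w^C,\lambda+\rho,\eta))$; comparing $\mf{z}$-weight characters on the two sides — the right-hand side computed from the cell decomposition, the left-hand side from Proposition \ref{whittaker facts}(iv) — shows it is an isomorphism. An alternative, less computational route is to build a geometric Whittaker functor on $\mc{D}_{\lambda+\rho}$-modules that intertwines with Backelin's $\overline{\Gamma}_\eta$ \cite{Backelin} under $\Gamma(X,-)$ and sends $\mc{M}(w^C,\lambda+\rho,0)$ to $\mc{M}(w^C,\lambda+\rho,\eta)$, and then reduce to the classical identity $\Gamma(X,\mc{M}(w^C,\lambda+\rho,0))=M(w^C\cdot\lambda)$ and Proposition \ref{prop:Whittaker-functor}.

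For the second statement, assume in addition that $\lambda+\rho$ is regular, so $\Gamma(X,-)$ is the quasi-inverse of the equivalence \eqref{BB equivalence}, in particular an exact and faithful equivalence. Applying it to the canonical morphism \eqref{canonical morphism} $\mc{M}(w^C,\lambda+\rho,\eta)\to\mc{I}(w^C,\lambda+\rho,\eta)$, whose image is $\mc{L}(w^C,\lambda+\rho,\eta)$, exactness shows $\Gamma(X,\mc{L}(w^C,\lambda+\rho,\eta))$ is the image of the induced map on global sections; by faithfulness it is nonzero, and by the equivalence it is irreducible. Hence it is a nonzero irreducible quotient of $\Gamma(X,\mc{M}(w^C,\lambda+\rho,\eta))\cong M(w^C\cdot\lambda,\eta)$, which by Proposition \ref{whittaker facts}(v) has unique irreducible quotient $L(w^C\cdot\lambda,\eta)$; therefore $\Gamma(X,\mc{L}(w^C,\lambda+\rho,\eta))=L(w^C\cdot\lambda,\eta)$.

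I expect the main obstacle to be the middle step of the first part: identifying $U(\mf{p}_\eta)\omega$ with precisely $Y(w^C\cdot\lambda,\eta)$, rather than merely with some $\mf{l}_\eta$-module having the same infinitesimal character, and verifying that no global sections are lost in the antidominant-but-possibly-singular range. Both require a careful local study of the $\eta$-twisted connection on the Bruhat cell $C(w^C)$ and of how the twist interacts with the Levi $\mf{l}_\eta$ and the nilradical $\n^\eta$ — or, in the alternative approach, the construction and compatibility of the geometric Whittaker functor, which is itself a substantial undertaking.
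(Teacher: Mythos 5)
There is no internal proof to compare against here: the paper imports this proposition wholesale from \cite{Romanov} (Thms.~9 and 10), so the benchmark is Romanov's argument, whose content is exactly the steps your sketch leaves open. Your treatment of the second display is correct and complete modulo the first: for $\lambda+\rho$ regular antidominant, $\Gamma$ is an exact equivalence by (\ref{BB equivalence}), exact functors preserve images, so $\Gamma(X,\mc{L}(w^C,\lambda+\rho,\eta))$ is an irreducible quotient of $\Gamma(X,\mc{M}(w^C,\lambda+\rho,\eta))\cong M(w^C\cdot\lambda,\eta)$, hence equals $L(w^C\cdot\lambda,\eta)$ by Proposition \ref{whittaker facts}(v). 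That is the standard deduction.

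The first display, however, is where the real work lies, and your proposal has two genuine gaps (which you partly flag yourself). First, the claim that $i_{w^C!}\mc{O}_{C(w^C),\eta}$ is cyclically generated by ``the canonical generator of the rank-one connection'' is not justified: that section lives only over $C(w^C)$, and for a $!$-extension there is no canonical way to view it as a global section; already for $j:\mathbb{G}_m\hookrightarrow\A^1$ the module $j_!\mc{O}_{\mathbb{G}_m}$ is cyclic but not on the section corresponding to the function $1$ (and for $j_+$ that section generates a proper submodule). Producing a global Whittaker vector $\omega$ of type $\eta$ and identifying $U(\p_\eta)\omega$ with $Y(w^C\cdot\lambda,\eta)$ is precisely the heart of the theorem, not a routine local verification. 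Second, upgrading the resulting surjection $M(w^C\cdot\lambda,\eta)\twoheadrightarrow\Gamma(X,\mc{M}(w^C,\lambda+\rho,\eta))$ to an isomorphism by ``comparing $\z$-weight characters'' presupposes an independent computation of the $\z$-character of the global sections; since $\lambda+\rho$ is only assumed antidominant, $\Gamma$ is exact but need not be faithful, sections can genuinely be lost, and no mechanism is offered to compute the right-hand side (Romanov does this by relating the standard sheaf to the fibration over the partial flag variety of $\p_\eta$ and invoking the nondegenerate case of Mili\v{c}i\'{c}--Soergel on the Levi; an intertwining-functor bookkeeping as in the proof of Lemma \ref{lemma:algebraic-geometric-agreement} only controls classes in the Grothendieck group, not the module itself). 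The alternative route via a ``geometric Whittaker functor'' intertwining Backelin's $\overline{\Gamma}_\eta$ is likewise only named, not constructed. So as written the proposal derives the second statement from the first, but does not prove the first.
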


Moreover, the localizations of standard Whittaker modules are standard $\eta$-twisted Harish-Chandra sheaves. We will need the following special case of this in the computations of Section \ref{lie algebra homology of standard whittaker modules}.

\begin{proposition}
\label{prop:localization}
Let $\lambda \in \h^*$, $\mu \in W \cdot \lambda$, $\eta \in \ch \mf{n}$ nondegenerate, and $w_0 \in W$ the longest element of the Weyl group. Then 
\[
\Delta_{\mu + \rho}(M(\lambda, \eta)) = \mc{M}(w_0, \mu + \rho, \eta).
\]
\end{proposition}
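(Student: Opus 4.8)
The plan is to verify the claimed identity by identifying both sides as standard objects in the category $\mc{M}_{\text{coh}}(\mc{D}_{\mu+\rho},N,\eta)$ and checking that their parameters match. Since $\eta$ is nondegenerate, we have $\Pi_\eta = \Pi$, so $W_\eta = W$, and the coset space $W_\eta \backslash W$ consists of a single coset, whose unique longest representative is $w_0$. Thus there is exactly one standard $\eta$-twisted Harish-Chandra sheaf for $\mc{D}_{\mu+\rho}$, namely $\mc{M}(w_0, \mu+\rho, \eta)$, and correspondingly exactly one standard Whittaker module with infinitesimal character $\xi(\lambda) = \xi(\mu)$ and type $\eta$, namely $M(\lambda,\eta) \cong M(\mu,\eta)$ (using Proposition \ref{whittaker facts}(ii), since $\mu \in W\cdot\lambda = W_\eta\cdot\lambda$). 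The point is then that $\Delta_{\mu+\rho}$, being an equivalence that sends standard objects to standard objects, must send the unique standard in $\mc{N}(\xi(\lambda),\eta)$ to the unique standard in $\mc{M}_{\text{coh}}(\mc{D}_{\mu+\rho},N,\eta)$.

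More carefully, I would proceed as follows. First, observe that $M(\lambda,\eta) = M(\mu,\eta)$ lies in $\mc{N}(\xi(\lambda),\eta) = \mc{N}(\xi(\mu),\eta)$ by Proposition \ref{whittaker facts}(i), and that when $\eta$ is nondegenerate this module equals $Y(\lambda,\eta)$, which is irreducible. Next, to apply the localization equivalence \eqref{BB equivalence} and Proposition \ref{prop:global-sections-of-standard}, I need $\mu+\rho$ to be regular and antidominant. Regularity is the hypothesis $\mu \in W\cdot\lambda$ together with... actually regularity needs to be assumed or arranged; I would note that the statement is only nonvacuous when $\lambda+\rho$ is regular (otherwise $\mc{M}(w_0,\mu+\rho,\eta)$ still makes sense but the equivalence degenerates), and that we may choose the representative $\mu \in W\cdot\lambda$ to be antidominant — this is exactly the standing freedom in choosing the coset representative, and $w_0\cdot\lambda$ is antidominant whenever $\lambda+\rho$ is dominant, so such $\mu$ exists. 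Then Proposition \ref{prop:global-sections-of-standard} (with the single coset $C = W$, $w^C = w_0$) gives $\Gamma(X,\mc{M}(w_0,\mu+\rho,\eta)) \cong M(w_0\cdot\mu,\eta)$, and since $w_0\cdot\mu \in W\cdot\lambda = W_\eta\cdot\lambda$, Proposition \ref{whittaker facts}(ii) gives $M(w_0\cdot\mu,\eta) \cong M(\lambda,\eta)$.

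Finally, I apply $\Delta_{\mu+\rho}$ to this isomorphism. Since $\Delta_{\mu+\rho}$ is inverse to $\Gamma(X,-)$ on these categories by \eqref{BB equivalence}, we get
\[
\Delta_{\mu+\rho}(M(\lambda,\eta)) \cong \Delta_{\mu+\rho}\bigl(\Gamma(X,\mc{M}(w_0,\mu+\rho,\eta))\bigr) \cong \mc{M}(w_0,\mu+\rho,\eta),
\]
which is the claim. The main subtlety — really the only obstacle — is bookkeeping around the choice of Weyl group representative and the antidominance/regularity conditions needed to invoke the localization equivalence: one must make sure that the $\mu$ appearing in $\Delta_{\mu+\rho}$ and the one appearing in $\mc{M}(w_0,\mu+\rho,\eta)$ are consistent, and that replacing $\mu$ by another element of its $W$-orbit (e.g. $w_0\cdot\mu$) does not change the standard sheaf, which holds precisely because $W_\eta = W$ collapses the whole orbit to one coset. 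Everything else is a direct citation of Proposition \ref{whittaker facts}, Proposition \ref{prop:global-sections-of-standard}, and the equivalence \eqref{BB equivalence}.
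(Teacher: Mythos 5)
There is a genuine gap: your argument only establishes the identity when $\mu+\rho$ is regular and antidominant, whereas the proposition asserts it for \emph{every} $\mu\in W\cdot\lambda$, with the twist $\mu+\rho$ fixed by the given $\mu$. The suggestion to ``choose the representative $\mu$ antidominant'' does not repair this: replacing $\mu$ by another element of its orbit changes the sheaf of twisted differential operators $\mc{D}_{\mu+\rho}$, hence changes both $\Delta_{\mu+\rho}$ and $\mc{M}(w_0,\mu+\rho,\eta)$, so you are proving a different statement rather than reducing the general case to the antidominant one. The generality matters: in the geometric proof of Theorem \ref{thm:homology} the proposition is invoked with twist $w_0\cdot\mu+\rho$ as $\mu$ runs over all of $W\cdot\lambda$, and most of these twists are not antidominant; for such twists the equivalence \eqref{BB equivalence} is unavailable, $\Gamma$ is not inverse to $\Delta_{\mu+\rho}$, and your final step $\Delta_{\mu+\rho}(\Gamma(X,\mc{M}))\cong\mc{M}$ has no justification.

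The paper's proof avoids the equivalence altogether. It uses only the adjunction $(\Delta_{\mu+\rho},\Gamma)$, valid for every twist, to compute
\[
\Hom\bigl(\Delta_{\mu+\rho}(M(\lambda,\eta)),\mc{M}(w_0,\mu+\rho,\eta)\bigr)\cong\Hom\bigl(M(\lambda,\eta),\Gamma(X,\mc{M}(w_0,\mu+\rho,\eta))\bigr)\cong\Hom\bigl(M(\lambda,\eta),M(\lambda,\eta)\bigr)=\C,
\]
where the middle isomorphism uses the computation of global sections of standard sheaves in the nondegenerate case (\cite[Prop.~7]{Romanov}), which requires no antidominance, together with Proposition \ref{whittaker facts}(ii), and the last equality uses irreducibility of $M(\lambda,\eta)=Y(\lambda,\eta)$. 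Since the nondegenerate category $\mc{M}_{\text{coh}}(\mc{D}_{\mu+\rho},N,\eta)$ is semisimple with unique irreducible object $\mc{M}(w_0,\mu+\rho,\eta)$ (\cite[Thm.~5.5]{TwistedSheaves}), every object is a direct sum of copies of that object, and the one-dimensional $\Hom$ space forces $\Delta_{\mu+\rho}(M(\lambda,\eta))$ to be exactly one copy. If you want to keep your bookkeeping framework, these two inputs---global sections of nondegenerate standards without antidominance, and semisimplicity of the nondegenerate category---are what you need in place of the localization equivalence.
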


\begin{proof}
Using the adjunction $(\Delta_{\mu + \rho}, \Gamma)$,  \cite[Prop. 7]{Romanov}, and Proposition \ref{whittaker facts}(ii), we compute 
\begin{align*}
    \Hom_{\mc{M}_{\text{coh}}(\mc{D}_\lambda, N, \eta)}(\Delta_{\mu + \rho}&(M(\lambda, \eta)), \mc{M}(w_0, \mu + \rho, \eta)) \\
    &= \Hom_{\mc{N}(\xi(\lambda), \eta)}(M(\lambda, \eta), \Gamma(X, \mc{M}(w_0, \mu + \rho, \eta)))\\
    &= \Hom_{\mc{N}(\xi(\lambda), \eta)}(M(\lambda, \eta), M(w_0 \cdot \mu, \eta))\\
    &=\Hom_{\mc{N}(\xi(\lambda), \eta)}(M(\lambda, \eta), M(\lambda, \eta))\\
    &=\C.
\end{align*}
The category $\mc{M}_\text{coh}(\mc{D}_\lambda, N, \eta)$ is semisimple with one irreducible object, $\mc{M}(w_0, \mu + \rho, \eta)$ \cite[Theorem 5.5]{TwistedSheaves}, so we conclude that 
\[
\Delta_{\mu + \rho}(M(\lambda, \eta)) = \mc{M}(w_0, \mu + \rho, \eta).
\]
\end{proof}

\subsection{Geometric fibres and Lie algebra homology}

For an $\mc{O}_X$-module $\mc{F}$ on $X$, we denote by $T_x(\mc{F})$ its geometric fibre; i.e.
\[
T_x(\mc{F}) = \mc{F}_x/\mf{m}_x \mc{F}_x,
\]
where $\mf{m}_x$ is the maximal ideal corresponding to $x \in X$. This defines a right exact covariant functor 
\[
T_x: \mc{M}(\mc{O}_X) \rightarrow \Vect_\C.
\]
We can use the geometric fibre functor to compute Lie algebra in the following way. 
\begin{proposition}\label{prop:geometric-lie-homology}
Let $\lambda+\rho \in \mf{h}^*$ be regular, $w_0$ be the longest element of $W$, and $V$ be a $\mf{g}$-module with infinitesimal character $\chi^\lambda$. Then
\[
T_x ( \Delta_{\lambda+\rho} (V)) = H_0(\bar{\n}, V)_{w_0\cdot\lambda}, 
\]
for each $x \in C(w_0)\subset X$.
\end{proposition}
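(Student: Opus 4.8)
The plan is to reduce the statement to the standard computation of the geometric fibres of a localization functor. First I would recall the general formula (see \cite[App.~B]{HMSWI} and \cite{Localization}): for any point $x\in X$, with associated Borel subalgebra $\mf{b}_x\subset\g$, nilradical $\n_x=[\mf{b}_x,\mf{b}_x]$, and canonical identification of $\mf{b}_x/\n_x$ with the abstract Cartan $\mf{h}$, there is a natural isomorphism
\[
T_x\bigl(\Delta_{\lambda+\rho}(V)\bigr)\;\cong\;\C_{\sigma_x}\otimes_{U(\mf{b}_x)}V ,
\]
where $\C_{\sigma_x}$ is the one-dimensional $\mf{b}_x$-module on which $\n_x$ acts by zero and $\mf{h}$ acts by a weight $\sigma_x$ depending only on $\lambda$ and on the $N$-orbit of $x$. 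Since $\n_x$ acts trivially on $\C_{\sigma_x}$, the right-hand side equals $\C_{\sigma_x}\otimes_{U(\mf{h})}H_0(\n_x,V)$. The Casselman--Osborne lemma, together with regularity of $\lambda+\rho$, implies that $H_0(\n_x,V)$ is the direct sum of its $\mf{h}$-weight spaces $H_0(\n_x,V)_{w\cdot\lambda}$, $w\in W$; hence this tensor product is the single summand $H_0(\n_x,V)_{\sigma_x}$, a well-defined direct summand.

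Next I would specialize to the big cell $C(w_0)$. Since $C(w_0)=N\cdot(\dot{w}_0 B)$ is a single $N$-orbit it suffices to treat $x=\dot{w}_0 B$, as translation by $n\in N$ carries $\bar{\n}$-coinvariants to $\n_x$-coinvariants compatibly with the abstract Cartan identifications and $\sigma_x$ is constant along the orbit. For $x=\dot{w}_0 B$ the Borel subalgebra is the opposite Borel $\mf{b}_x=\bar{\n}\oplus\mf{h}=\mathrm{Ad}(\dot{w}_0)\mf{b}$, so $\n_x=\bar{\n}$ and the formula above becomes
\[
T_x\bigl(\Delta_{\lambda+\rho}(V)\bigr)\;\cong\;H_0(\bar{\n},V)_{\sigma},\qquad \sigma:=\sigma_{\dot{w}_0 B} .
\]
It remains only to identify $\sigma=w_0\cdot\lambda$.

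To do so, observe that from the construction of $\mc{D}_{\lambda+\rho}$ the weight $\sigma$ has the form $\sigma=w\cdot\lambda$ for a fixed Weyl group element $w$ independent of $\lambda$, so it suffices to determine $w$ by testing on Verma modules for one convenient choice of $\lambda$. Take $\lambda$ with $\lambda+\rho$ regular and antidominant, and evaluate both functors on the Verma modules $M(w'\cdot\lambda)$, $w'\in W$ (all of which have infinitesimal character $\chi^\lambda$). On the one hand $H_0(\bar{\n},M(w'\cdot\lambda))=M(w'\cdot\lambda)/\bar{\n}M(w'\cdot\lambda)\cong\C_{w'\cdot\lambda}$, so $H_0(\bar{\n},M(w'\cdot\lambda))_{\sigma}\neq 0$ exactly when $w'\cdot\lambda=w\cdot\lambda$, i.e.\ (by regularity of $\lambda+\rho$) when $w'=w$. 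On the other hand, the Beilinson--Bernstein equivalence~\eqref{BB equivalence} together with Proposition~\ref{prop:global-sections-of-standard} (applied with $\eta=0$) identifies $\Delta_{\lambda+\rho}(M(w'\cdot\lambda))$ with the standard twisted Harish-Chandra sheaf supported on $\overline{C(w')}$, whose geometric fibre at a point of $C(w_0)$ is nonzero exactly when $C(w_0)\subseteq\overline{C(w')}$, i.e.\ when $w'=w_0$. Comparing the two computations forces $w=w_0$, hence $\sigma=w_0\cdot\lambda$, which proves the proposition.

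I expect the main obstacle to be the bookkeeping of normalizations: one must confirm that the twist parameter $\lambda+\rho$ (rather than $\lambda$), the $\rho$-shift built into the dot action, and the $w_0$-twist arising from $\bar{\n}\oplus\mf{h}=\mathrm{Ad}(\dot{w}_0)\mf{b}$ together produce exactly $w_0\cdot\lambda$ and not some other element of $W\cdot\lambda$. The Verma-module test above is precisely what lets one avoid re-deriving the conventions of \cite{Localization} and \cite{HMSWI} by hand.
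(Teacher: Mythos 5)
Your proposal is correct in outline, but it takes a genuinely different route from the paper: the paper's entire proof is a one-line citation of \cite[Ch.~3 \S 2, Cor.~2.6]{Localization}, where the geometric fibre of the localization is computed together with the precise weight normalization, so no further identification is needed there. You instead quote only the ``unnormalized'' fibre formula $T_x(\Delta_{\lambda+\rho}(V))\cong \C_{\sigma_x}\otimes_{U(\mf{b}_x)}V$ and then pin down $\sigma_x=w_0\cdot\lambda$ by evaluating both sides on Verma modules, using the equivalence \eqref{BB equivalence} and Proposition \ref{prop:global-sections-of-standard} with $\eta=0$; these inputs are independent of the present proposition, so there is no circularity. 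What your route buys is a convention-free determination of the $\rho$- and $w_0$-shifts; what the paper's citation buys is brevity and a reference that already states the result at every point in the correct form.

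Two glossed steps deserve attention. First, the claim that $\sigma_x=w\cdot\lambda$ for a fixed $w\in W$ independent of $\lambda$ is exactly what makes a single test at one antidominant $\lambda$ conclusive, so it needs justification: e.g.\ $\sigma_x=A(\lambda+\rho)-\rho$ where $A$ is the $\lambda$-independent identification of $\mf{b}_x/[\mf{b}_x,\mf{b}_x]$ with $\h$, which is given by a Weyl group element (alternatively, argue by Zariski density of antidominant regular parameters, using linearity of $A$). Relatedly, the semisimplicity of the $\h$-action on the $[\mf{b}_x,\mf{b}_x]$-coinvariants follows because Casselman--Osborne makes them a module over $U(\h)/p(\xi(\lambda))U(\h)$, which for regular $\lambda+\rho$ is a product of $|W|$ copies of $\C$ (Lemma \ref{lemma:weight-space}); regularity alone does not immediately give a direct sum of weight spaces. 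Second, your reduction from an arbitrary $x\in C(w_0)$ to the torus-fixed point by ``translation by $n\in N$'' presupposes an $N$-action on $V$, which a general $\g$-module with infinitesimal character need not carry; the honest statement at a general point involves the coinvariants for the nilradical at $x$ rather than for the fixed $\bar{\n}$. This wrinkle is present in the paper's own formulation as well and is harmless in the intended application (there $V$ lies in category $\N$, so $\Delta_{\lambda+\rho}(V)$ is $N$-equivariant and the fibres along $C(w_0)$ are all isomorphic), but it should be addressed rather than waved through.
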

\begin{proof}
This follows from \cite[Ch. 3 \S2 Cor. 2.6]{Localization}. 
\end{proof}
\section{Lie algebra homology of standard Whittaker modules}
\label{lie algebra homology of standard whittaker modules}

In this section we give an algebraic and geometric calculation of the $\bar{\n}$-coinvariants (Definition \ref{Lie algebra homology}) of standard Whittaker modules (Theorem \ref{thm:homology}). We begin with several preliminary algebraic results. 

Let $R, S$ be rings. We say that a $(R,S)$-bimodule $M$ is a free $(R,S)$-bimodule of rank $n$ if $M$ is a free $R\otimes S^{\text{op}}$-module of rank $n$. The following lemma is well-known; we include a standard proof which is adapted from \cite[Lem. 5.7]{TwistedSheaves}.

\begin{lemma}\label{U-free}
$U(\g)$ is a free $(U(\bar{\n}),Z(\g)\otimes U(\n))$-bimodule of rank $|W|$. 
\end{lemma}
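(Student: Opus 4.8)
The plan is to use the PBW theorem together with the triangular decomposition $\g = \bar\n \oplus \h \oplus \n$ and the Harish-Chandra isomorphism for $Z(\g)$. First I would recall the PBW-type statement that $U(\g)$ is a free left $U(\bar\n)$-module and a free right $U(\n)$-module; more precisely, multiplication gives a vector space isomorphism $U(\bar\n) \otimes U(\h) \otimes U(\n) \xrightarrow{\sim} U(\g)$, so as a $(U(\bar\n), U(\n))$-bimodule, $U(\g) \cong U(\bar\n) \otimes U(\h) \otimes U(\n)$ with $U(\bar\n)$ acting on the left factor and $U(\n)$ on the right factor. The point is then to understand $U(\h)$ as a module over $Z(\g)$ via the (twisted) Harish-Chandra homomorphism $p\colon Z(\g) \to U(\h)$.

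The key input is that $U(\h)$ is a free $p(Z(\g))$-module of rank $|W|$. Indeed, $U(\h) = S(\h)$ is the polynomial ring on $\h$, the image $p(Z(\g))$ is (after the $\rho$-shift) the subring $S(\h)^W$ of $W$-invariants, and by the Chevalley--Shephard--Todd theorem $S(\h)$ is a free module over $S(\h)^W$ of rank $|W|$. So I would fix a basis $f_1, \dots, f_{|W|}$ of $S(\h)$ over $S(\h)^W$. Since $p\colon Z(\g) \to S(\h)^W$ is an algebra isomorphism, this exhibits $U(\h)$ as a free $Z(\g)$-module of rank $|W|$, with $Z(\g)$ acting through $p$.

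Now I would assemble the pieces. As a $(U(\bar\n), Z(\g) \otimes U(\n))$-bimodule,
\[
U(\g) \;\cong\; U(\bar\n) \otimes U(\h) \otimes U(\n) \;\cong\; U(\bar\n) \otimes \Bigl(\bigoplus_{i=1}^{|W|} Z(\g) f_i\Bigr) \otimes U(\n) \;\cong\; \bigoplus_{i=1}^{|W|} U(\bar\n) \otimes Z(\g) \otimes U(\n),
\]
and each summand $U(\bar\n) \otimes Z(\g) \otimes U(\n)$ is a free $(U(\bar\n), Z(\g) \otimes U(\n))$-bimodule of rank $1$, i.e. free of rank $1$ over $U(\bar\n) \otimes (Z(\g) \otimes U(\n))^{\mathrm{op}}$. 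Hence $U(\g)$ is free of rank $|W|$, as claimed. The one subtlety to check carefully is that the middle factor $U(\h)$ genuinely commutes with $U(\bar\n)$ and $U(\n)$ in the relevant sense so that the decomposition of $U(\h)$ as a $Z(\g)$-module can be tensored up without interference — more precisely, that the $Z(\g)$-action and the $U(\n)$-action on $U(\g)$, restricted appropriately, are captured by this tensor decomposition. This is the main (mild) obstacle: one must verify that although $Z(\g)$ does not literally sit inside the PBW middle factor $U(\h)$, acting by right multiplication by $z \in Z(\g)$ on $U(\g)$ corresponds, modulo the left ideal $U(\g)\bar\n$ and relative to the right $U(\n)$-structure, to multiplication by $p(z) \in U(\h)$ — this is exactly the content of the Harish-Chandra construction, and is how the rank $|W|$ decomposition of $U(\h)$ over $p(Z(\g))$ propagates to the bimodule statement. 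I would handle this by filtering $U(\g)$ by the standard PBW filtration (or by $\bar\n$-degree) and checking the statement on associated graded, where $Z(\g)$ acts through its symbol in $S(\h)$, reducing to the purely commutative Chevalley freeness statement.
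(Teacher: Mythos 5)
Your overall plan is the same as the paper's: you correctly observe that the PBW isomorphism $U(\bar\n)\otimes U(\h)\otimes U(\n)\cong U(\g)$ is not, as it stands, an isomorphism of $(U(\bar{\n}),Z(\g)\otimes U(\n))$-bimodules, and you propose to repair this by a filtration argument whose associated graded reduces the claim to the Chevalley freeness of $S(\h)$ over $S(\h)^W$. That is exactly the skeleton of the paper's proof. However, the pivotal step --- the choice of filtration --- is where your sketch has a genuine gap. With the standard PBW (total degree) filtration, $\mathrm{gr}\,U(\g)\cong S(\g)$ and the symbol of a central element lies in $S(\g)^{\g}$, acting by multiplication inside $S(\g)$; it does \emph{not} act ``through its symbol in $S(\h)$.'' The statement you would then need is that $S(\g)\cong S(\bar\n)\otimes S(\h)\otimes S(\n)$ is free of rank $|W|$ over the subalgebra generated by $S(\bar\n)$, $S(\g)^{\g}$ and $S(\n)$, which is not the Chevalley theorem and requires its own argument. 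The alternative you mention, filtering by $\bar\n$-degree, also fails as stated: right multiplication by a central element (already the Casimir of $\mathfrak{sl}_2$, which has a $yx$ term) does not preserve the degree-zero piece, so the bimodule structure is not filtered in the naive way.

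The filtration that makes the reduction work, and the one the paper uses, is the ``$\h$-degree'' filtration $F_pU(\g)=U(\bar\n)\otimes U_p(\h)\otimes U(\n)$, paired with the filtration $(U_p(\g)\cap Z(\g))\otimes U(\n)$ on $Z(\g)\otimes U(\n)$. The point is the Harish-Chandra property: for $z\in U_p(\g)\cap Z(\g)$ one has $z-\gamma(z)\in U_{p-1}(\g)\n$ with $\gamma(z)\in U_p(\h)$, which is what shows that right multiplication by $z$ is compatible with $F_\bullet$ and that on the associated graded $Z(\g)$ acts on the middle factor through $\mathrm{gr}\,\gamma:\mathrm{gr}\,Z(\g)\xrightarrow{\ \sim\ }S(\h)^W$; only at that point does your Chevalley input $S(\h)\cong \C[W]\otimes_\C S(\h)^W$ apply. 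Finally, ``checking the statement on associated graded'' needs the lifting step made explicit: choose lifts $b_w\in U(\h)$ of a graded basis and argue that the multiplication map from the free $(U(\bar\n),Z(\g)\otimes U(\n))$-bimodule on $\{b_w\}$ to $U(\g)$ is a filtered map inducing an isomorphism on graded pieces, hence bijective (the paper cites Bourbaki for this). So your proposal identifies the right obstacle and the right target statement, but as written the reduction to Chevalley does not go through with the filtrations you name; it needs the $\h$-degree filtration and the Harish-Chandra estimate above.
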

\begin{proof}
Let $\{U_p(\g);p\in\mathbb{Z}_{\ge 0}\}$ denote the filtration of $U(\g)$ induced from the degree filtration of Poincare-Birkhoff-Witt basis elements. Let $F_pU(\g)$ be the linear filtration of $U(\g)$ defined by 
\[F_pU(\g)=U(\bar{\n})\otimes_\C U_p(\h)\otimes_\C U(\n).\]
The $F_p$ filtration of $U(\g)$ induces a filtration (which we again denote by $F_p$) on $Z(\g)\otimes U(\n)$: 
\[
F_p(Z(\g)\otimes U(\n)):=(U_p(\g)\cap Z(\g))\otimes U(\n). 
\]
The $F_p$ filtration of $Z(\g)\otimes U(\n)$ preserves the ring structure of $Z(\g)\otimes U(\n)$, i.e. if $u\in F_p(Z(\g)\otimes U(\n))$ and $u'\in F_q(Z(\g)\otimes U(\n))$, then $uu'\in F_{p+q}(Z(\g)\otimes U(\n))$. Therefore, the corresponding graded object $\text{gr}(Z(\g)\otimes U(\n))$ is a ring. Moreover, as rings, 
\[
\text{gr}(Z(\g)\otimes U(\n))\cong \text{gr}Z(\g)\otimes U(\n),
\]
where $\text{gr}Z(\g)$ denotes the graded ring associated to the filtration $U_p(\g)\cap Z(\g)$ of $Z(\g)$. It is well-known that the Harish-Chandra homomorphism $\gamma$ preserves the filtrations of $Z(\g)$ and $S(\h)$, and induces an isomorphism $\text{gr}\gamma$ from $\text{gr}Z(\g)$ to $S(\h)^W$. 

Suppose $z\in F_p(Z(\g))$. Then the Harish-Chandra homomorphism implies $z-\gamma(z)\in U_{p-1}(\g)\n$ and $\gamma(z)\in U_p(\h)$. Viewing $U(\g)$ as a right $Z(\g)\otimes U(\n)$-module, we have
\begin{align*}
U_q(\h)\cdot z &\subset  U_q(\h)\gamma(z)+U_q(\h)U_{p-1}(\g)\n \\
& \subset U_{q+p}(\h)+U_{q+p-1}(\g)\n\\
&\subset U_{q+p}(\h)+F_{q+p-1}U(\g) \subset F_{q+p}U(\g),
\end{align*}
because $U_{q+p-1}(\g)\subset F_{q+p-1}U(\g)$ and $\n\in F_0U(\g)$. This implies that $(U(\g),F_\bullet)$ is a filtered right $(Z(\g)\otimes U(\n),F_\bullet)$-module. Suppose $u\in U(\bar{\n})$. Then
\begin{align*}
    u\cdot F_pU(\g)&\subset u\cdot (U(\bar{\n})\otimes_\C U_p(\h)\otimes_\C U(\n))\\
    &\subset  F_pU(\g).
\end{align*}
Therefore, the left action of $U(\bar{\n})$ on $U(\g)$ preserves the $F_pU(\g)$ filtration of $U(\g)$. Altogether, we have shown that $U(\g)$ is a filtered $(U(\bar{\n}),Z(\g)\otimes U(\n))$-bimodule with the $F_\bullet$-filtration on $U(\g)$ and $Z(\g)\otimes U(\n)$, and the trivial filtration on $U(\bar\n)$.

Let $\text{gr}U(\g)$ denote the graded $(U(\bar{\n}),\text{gr}Z(\g)\otimes U(\n))$-bimodule corresponding to the $F_pU(\g)$ filtration of $U(\g)$. We have that 
\[\text{gr}U(\g)\cong U(\bar{\n})\otimes_\C U(\h)\otimes_\C U(\n),\] 
where $U(\bar{\n})$ acts by left multiplication, $\text{gr}Z(\g)=S(\h)^W$ acts on the $U(\h)$ factor, and $U(\n)$ acts by right multiplication. It is well-known that $U(\h)$ is a free $S(\h)^W$-module of rank $|W|$. Moreover, as $\C[W]$-modules, 
\[
U(\h)\cong \C[W]\otimes_\C S(\h)^W. 
\]
Therefore, $\text{gr}U(\g)$ is a free $(U(\bar{\n}),\text{gr}Z(\g)\otimes U(\n))$-bimodule of rank $|W|$. Let $\{\delta_w:w\in W\}$ be the canonical vector space basis for $\C[W]$, and identify each $\delta_w$ with an element $b_w$ of $U(\h)$ using the above isomorphism. Then \[\{1\otimes b_w\otimes 1 \in U(\bar{\n})\otimes_\C U(\h)\otimes_\C U(\n):w\in W\}\]
is a basis of $\text{gr}U(\g)$ as a free $(U(\bar{\n}),\text{gr}Z(\g)\otimes U(\n))$-bimodule. The multiplication map from the free $(U(\bar{\n}),Z(\g)\otimes U(\n))$-bimodule generated by $\{b_w\}_{w\in W}$ to $U(\g)$ is bijective by ~\cite[Ch.\ 3, \S 2, No.\ 8, Cor.\ 3]{BourbakiCommutativeAlgebra}. In other words, $U(\g)$ is a free $(U(\bar{\n}),Z(\g)\otimes U(\n))$-bimodule of rank $|W|$, with basis given by $\{b_w\}$. 
\end{proof}
\begin{lemma}\label{lemma:weight-space}
Let $\lambda+\rho \in \h^*$ be regular\footnote{Here, and in what follows, we assume regularity only to simplify statements and proofs of the results; we expect analogous statements to hold more generally, without much additional difficulty.} and $\xi(\lambda)=\ker\chi^\lambda \in \Max Z(\g)$ the maximal ideal corresponding to $\lambda$. Define $V^\lambda := U(\h)/p(\xi(\lambda))U(\h)$, where $p:Z(\g)\rightarrow U(\h)$ is the Harish-Chandra homomorphism. Then, as $U(\h)$-modules, 
\[
V^\lambda = \bigoplus_{w \in W} \C_{w\cdot\lambda}.
\]
\end{lemma}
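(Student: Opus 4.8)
The plan is to unwind the definition of $V^\lambda = U(\h)/p(\xi(\lambda))U(\h)$ using the well-known structure of $U(\h)$ as a module over $S(\h)^W$. Since $\h$ is abelian, $U(\h) = S(\h) = \C[\h^*]$, the coordinate ring of $\h^*$, and $p(\xi(\lambda))$ is the ideal $p(\ker\chi^\lambda) \subset S(\h)$. First I would recall the standard fact (used already in the proof of Lemma~\ref{U-free}) that $p$ identifies $Z(\g)$ with $S(\h)^W$ under the $\rho$-shifted action, so that $p(\xi(\lambda))S(\h)$ is the ideal of $S(\h)$ generated by all $W$-invariant polynomials (in the dot-shifted coordinates) that vanish at $\lambda$. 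Equivalently, after the $\rho$-shift $\mu \mapsto \mu + \rho$, this is the ideal generated by $\{f - f(\lambda+\rho) : f \in S(\h)^{W,+}\}$ where the $W$-action is the ordinary (unshifted) linear action.

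The key step is then to identify the quotient $S(\h)/(S(\h)^W_{+,\lambda+\rho})S(\h)$ scheme-theoretically. Geometrically, $\Max(S(\h)) = \h^*$, and the finite map $\h^* \to \h^*/W = \Max(S(\h)^W)$ has the fibre over the image of $\lambda+\rho$ equal to the $W$-orbit $W(\lambda+\rho)$. Since $\lambda + \rho$ is assumed regular, this orbit consists of exactly $|W|$ distinct points $\{w(\lambda+\rho) : w \in W\}$, which in the unshifted coordinates correspond to the points $\{w\cdot\lambda : w \in W\}$ (all distinct). The ring $S(\h)$ is free of rank $|W|$ over $S(\h)^W$ (again as used in Lemma~\ref{U-free}), so the quotient by a maximal ideal of $S(\h)^W$ is $|W|$-dimensional over $\C$; combined with regularity — which forces the fibre to be reduced, since a free module base-changed to a product of $|W|$ distinct points stays $|W|$-dimensional and hence each point contributes exactly a one-dimensional (reduced) piece — we get
\[
V^\lambda = S(\h)/p(\xi(\lambda))S(\h) \;\cong\; \bigoplus_{w \in W} \C_{w\cdot\lambda}
\]
as $U(\h) = S(\h)$-modules, where $\C_{w\cdot\lambda}$ denotes the one-dimensional module on which $h \in \h$ acts by $(w\cdot\lambda)(h)$ (the residue field at the point $w\cdot\lambda$).

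The main obstacle is making the reducedness argument clean: a priori the quotient $S(\h)/\mathfrak{m}S(\h)$ for $\mathfrak{m} \in \Max S(\h)^W$ is an Artinian ring of length $|W|$ supported on the orbit, but one must rule out nilpotents. The cleanest route is to invoke that the $W$-orbit of $\lambda+\rho$ has $|W|$ distinct points (regularity) together with the Chinese Remainder Theorem: the quotient decomposes as a product of local Artinian rings over the $|W|$ distinct maximal ideals of $S(\h)$ lying over $\mathfrak{m}$, and since the total length is $|W|$ and there are $|W|$ factors, each local factor has length one, i.e. equals its residue field $\C$. I would also need to double-check the bookkeeping of the $\rho$-shift: the dot action $w\cdot\lambda = w(\lambda+\rho)-\rho$ is exactly what converts the orbit $W(\lambda+\rho)$ under the linear action into the weights appearing in $V^\lambda$, because the Harish-Chandra homomorphism $p$ is defined with precisely this shift.
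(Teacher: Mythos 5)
Your argument is correct, but it runs along a different track than the paper's. You work scheme-theoretically with the finite flat map $\h^*\to\h^*/W$: freeness of $S(\h)$ over $S(\h)^W$ (equivalently over the image of $p$, after the $\rho$-shift) gives $\dim_\C V^\lambda=|W|$, the Nullstellensatz/orbit description identifies the support of the fibre with the $|W|$ distinct points $W\cdot\lambda$ (here regularity enters), and the Chinese Remainder plus a length count forces each local Artinian factor to be its residue field, which is exactly the reducedness one needs to get $\bigoplus_{w\in W}\C_{w\cdot\lambda}$ rather than generalized weight spaces. The paper instead adapts the eigenvalue argument from Mili\v{c}i\'{c}'s Localization notes: from the standard properties of the Harish-Chandra homomorphism it records $\dim V^\lambda=|W|$ and that $p(\xi(\lambda))U(\h)\subset\ker\mu$ iff $\mu\in W\cdot\lambda$, then observes that the nonzero quotient $q:V^\lambda\to\C_\mu$ kills the image of multiplication by $h-\mu(h)$, so this operator is not surjective and hence (finite-dimensionality) has a kernel, making every $\mu\in W\cdot\lambda$ occur as an $\h$-eigenvalue; a symmetry argument gives equal multiplicities, and regularity plus the dimension count pins each multiplicity to one. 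The two proofs rest on the same inputs (rank-$|W|$ freeness via Harish-Chandra, and regularity producing $|W|$ distinct characters), but yours makes the semisimplicity completely explicit via standard commutative algebra (no appeal to the somewhat terse ``by symmetry'' step), while the paper's is more elementary and self-contained, using only finite-dimensional linear algebra. Either route is acceptable; if you keep yours, state explicitly that the maximal ideals of $S(\h)$ containing $p(\xi(\lambda))S(\h)$ are precisely those at the points of $W\cdot\lambda$ (invariants of a finite group separate orbits), since that is the hinge on which both the support claim and the count of CRT factors turn.
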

\begin{proof}
We adapt the proof of \cite[Ch.3 \S2 Lem. 2.2]{Localization} to fit our setting. It follows from standard properties of the Harish-Chandra isomorphism that $\dim V^\lambda=|W|$ and
\[
p(\xi(\lambda))U(\h)\subset \ker\mu \Longleftrightarrow \mu\in W\cdot\lambda,
\]
where $\ker\mu$ is the kernel of $\mu:U(\h)\rightarrow \C$. Therefore, the quotient map
\[
q:V^\lambda\rightarrow \C_\mu = U(\h)/\ker\mu
\]
is well defined and nonzero for each $\mu\in W\cdot\lambda$. For $h\in\h$, multiplication by $h-\mu(h)$ defines a map $m:V^\lambda\rightarrow V^\lambda$. Clearly, $q\circ m =0$. Because $q$ is nonzero, $m$ must not be surjective. Because $V^\lambda$ is finite-dimensional and $m$ is not surjective, we conclude that $\ker m \neq 0$. Therefore, $\mu(h)$ is an eigenvalue of the operator $h\in\h$ on $V^\lambda$ if $\mu\in W \cdot \lambda$. By symmetry, each eigenvalue has the same multiplicity. Because $\lambda+\rho$ is regular, each eigenvalue is distinct, and the result follows. 
\end{proof}
\begin{theorem}\label{thm:homology}
Let $\lambda+\rho$ be regular. As $U(\mathfrak{h})$-modules,
\[
H_0(\bar{\n},M(\lambda,\eta)) = \bigoplus_{w\in W_\eta}\mathbb{C}_{w\cdot\lambda}.
\]
\end{theorem}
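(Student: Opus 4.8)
The plan is to reduce, via parabolic induction, to the $\bar\n_\eta$-coinvariants of the nondegenerate Whittaker module $Y(\lambda,\eta)$ over the Levi $\lev_\eta$, and then to pin that down by a dimension count together with an identification of the $\h$-action.

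\emph{Reduction to the Levi.} Since $\g=\bar\n^\eta\oplus\p_\eta$ as vector spaces and $\bar\n^\eta$ is the nilradical of the opposite parabolic, the Poincar\'e--Birkhoff--Witt theorem gives $M(\lambda,\eta)=U(\g)\otimes_{U(\p_\eta)}Y(\lambda,\eta)\cong U(\bar\n^\eta)\otimes_\C Y(\lambda,\eta)$ as left $U(\bar\n^\eta)$-modules, whence $H_0(\bar\n^\eta,M(\lambda,\eta))\cong Y(\lambda,\eta)$; this isomorphism is $\lev_\eta$-equivariant, since $\lev_\eta\subset\p_\eta$ acts on $1\otimes y$ by $\xi\cdot(1\otimes y)=1\otimes\xi y$ and normalizes $\bar\n^\eta$. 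Because $\bar\n^\eta$ is an ideal of $\bar\n$ with $\bar\n/\bar\n^\eta\cong\bar\n_\eta$, one has $\bar\n M=\bar\n_\eta M+\bar\n^\eta M$ with $\bar\n_\eta$ preserving $\bar\n^\eta M$, so coinvariants compose:
\[
H_0(\bar\n,M(\lambda,\eta))\cong H_0\big(\bar\n_\eta,H_0(\bar\n^\eta,M(\lambda,\eta))\big)\cong H_0(\bar\n_\eta,Y(\lambda,\eta))
\]
as $\h$-modules. (The weights $w\cdot\lambda$, $w\in W_\eta$, are unambiguous: the dot actions of $W_\eta$ relative to $\g$ and to $\lev_\eta$ agree, as $\rho-\rho_{\lev_\eta}$ is $W_\eta$-fixed.) Thus it suffices to show $H_0(\bar\n_\eta,Y(\lambda,\eta))=\bigoplus_{w\in W_\eta}\C_{w\cdot\lambda}$.

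\emph{The nondegenerate case over $\lev_\eta$.} Here $\eta|_{\n_\eta}$ is nondegenerate for $\lev_\eta$ and $Y(\lambda,\eta)=U(\lev_\eta)\otimes_{Z(\lev_\eta)\otimes U(\n_\eta)}\C_{\chi^\lambda_\eta,\eta}$. Lemma \ref{U-free}, whose proof applies verbatim to the reductive Lie algebra $\lev_\eta$, shows $U(\lev_\eta)$ is a free $(U(\bar\n_\eta),Z(\lev_\eta)\otimes U(\n_\eta))$-bimodule of rank $|W_\eta|$; hence $Y(\lambda,\eta)$ is a free left $U(\bar\n_\eta)$-module of rank $|W_\eta|$, so $\dim_\C H_0(\bar\n_\eta,Y(\lambda,\eta))=|W_\eta|$. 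To identify the $\h$-weights, observe that the composite $U(\h)\hookrightarrow U(\lev_\eta)\twoheadrightarrow H_0(\bar\n_\eta,Y(\lambda,\eta))$ is surjective (a PBW computation: passing to $\bar\n_\eta$-coinvariants and imposing the $\eta$-twisted relations collapses $U(\lev_\eta)$ onto $U(\h)$), and it annihilates $p_\eta(\xi_\eta(\lambda))U(\h)$, since $Z(\lev_\eta)$ acts on $Y(\lambda,\eta)$ — hence on its coinvariants — by $\chi^\lambda_\eta$ and the $\eta$-twisted Harish-Chandra projection $Z(\lev_\eta)\to U(\h)$ coincides with $p_\eta$ (Kostant). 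This yields a surjection $U(\h)/p_\eta(\xi_\eta(\lambda))U(\h)\twoheadrightarrow H_0(\bar\n_\eta,Y(\lambda,\eta))$, and by Lemma \ref{lemma:weight-space} applied to $\lev_\eta$ (using that $\lambda+\rho$ is regular) the source is $\bigoplus_{w\in W_\eta}\C_{w\cdot\lambda}$, of dimension $|W_\eta|$; so the surjection is an isomorphism. Alternatively, the weights can be read off geometrically: for each $\mu\in W_\eta\cdot\lambda$ the parameter $\mu+\rho_{\lev_\eta}$ is regular, $\Delta_{\mu+\rho_{\lev_\eta}}(Y(\lambda,\eta))$ is a nonzero object (localization at a regular parameter is faithful) of the category $\mc{M}_\text{coh}(\mc{D}_{\mu+\rho_{\lev_\eta}},N,\eta)$ — which for nondegenerate $\eta$ is semisimple with the single simple object $\mc{M}(w_0^\eta,\mu+\rho_{\lev_\eta},\eta)=i_{w_0^\eta!}\cO_{C(w_0^\eta),\eta}$, $C(w_0^\eta)$ the big, open and dense, Bruhat cell of $X_{\lev_\eta}$ — so its geometric fibre at a point of $C(w_0^\eta)$ is nonzero, and by Proposition \ref{prop:geometric-lie-homology} for $\lev_\eta$ this equals $H_0(\bar\n_\eta,Y(\lambda,\eta))_{w_0^\eta\cdot\mu}$; as $\mu$ runs over $W_\eta\cdot\lambda$ the weight $w_0^\eta\cdot\mu$ runs over all of $\{w\cdot\lambda:w\in W_\eta\}$, and these $|W_\eta|$ distinct weights exhaust the $|W_\eta|$-dimensional space.

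\emph{Main obstacle.} The heart of the matter is this last step: Lemma \ref{U-free} delivers the dimension $|W_\eta|$ immediately, but says nothing about \emph{which} weights occur. Resolving this either requires the Kostant-type compatibility of the $\eta$-twisted Harish-Chandra homomorphism for $\lev_\eta$ with the ordinary one (algebraic route), or requires knowing that the localization of $Y(\lambda,\eta)$ at an \emph{arbitrary} regular parameter — not just an antidominant one — remains a sum of copies of the standard twisted Harish-Chandra sheaf on the big cell, so that sliding $\mu$ through the whole orbit $W_\eta\cdot\lambda$ exposes every weight space (geometric route). Everything else — the PBW reductions, the bottom-degree computation $H_0(\bar\n^\eta,M(\lambda,\eta))\cong Y(\lambda,\eta)$, and the composition of coinvariant functors — is routine.
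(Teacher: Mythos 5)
Your proposal is correct and follows essentially the same route as the paper's proof: the PBW reduction to the nondegenerate case over $\lev_\eta$, the rank count from (the Levi analogue of) Lemma \ref{U-free}, and the identification of the weights via a surjection from $U(\h)/p_\eta(\xi_\eta(\lambda))U(\h)$ together with Lemma \ref{lemma:weight-space}. The only cosmetic differences are that you justify the factorization through this quotient by the equality $\pi_\eta = p_\eta$ on $Z(\lev_\eta)$ (a degree-zero Casselman--Osborne argument) where the paper cites the Casselman--Osborne lemma, and your geometric alternative is exactly the paper's second proof, in which the input you flag as the main obstacle is supplied by Proposition \ref{prop:localization}.
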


We include both an algebraic and geometric proof of this theorem. 


\vspace{2mm}
\noindent
{\em Algebraic proof.}
First, using the Poincare-Birkhoff-Witt basis of $U(\g)$, we have:
\[U(\g) = \bar{\n}^\eta U(\g)\oplus U(\p_\eta).\]
Therefore, as $U(\h)$-modules, we have
\begin{align*}
       H_0(\bar{\n},M(\lambda,\eta)) &= \C\otimes_{\bar{\n}} U(\g)\otimes_{\p_\eta} Y_{\lev_\eta}(\lambda,\eta) \\
       &= \C\otimes_{\bar{\n}} (\bar{\n}^\eta U(\g)\oplus U(\p_\eta))\otimes_{\p_\eta} Y_{\lev_\eta}(\lambda,\eta)\\
    &=\C\otimes_{\bar{\n}_\eta} Y_{\lev_\eta}(\lambda,\eta)\\
&= H_0(\bar{\n}_\eta, Y_{\lev_\eta}(\lambda,\eta)).
\end{align*}
The character $\eta\vert_{\n_\eta}\in \ch\n_\eta$ is nondegenerate, and $Y_{\lev_\eta}(\lambda,\eta)$ is a nondegenerate Whittaker module. Hence it suffices to prove the result for $\eta$ nondegenerate. In this setting, $M(\lambda,\eta)=Y(\lambda,\eta)$, and we have a surjective $U(\h)$-module morphism
\begin{align*}
    U(\h)&\rightarrow H_0(\bar{\n},Y(\lambda,\eta))= \C\otimes_{\bar{\n}} U(\g)\otimes_{Z(\g)\otimes U(\n)}\C_{\chi^\lambda,\eta}\\
    H&\mapsto 1\otimes H\otimes 1.
\end{align*}
By the Casselman--Osborne Lemma~\cite[Lem. 2.5]{CasselmanOsborne}, if $z\in \xi(\lambda)$, then $p(z)$ annihilates $H_0(\bar{\n},Y(\lambda,\eta))$. Therefore, the above surjective homomorphism of $U(\h)$-modules factors through $V^\lambda=U(\h)/p(\xi(\lambda))U(\h)$:
\begin{center}
\begin{tikzpicture}
\node at (0,0) {$V^\lambda$};
\node at (-2,1) {$U(\h)$};
\node at (2,1) {$H_0(\bar{\n},Y(\lambda,\eta))$};
\draw[->] (-1.5,1)--(.8,1);
\draw[->] (-1.5,.8)--(-.3,.15);
\draw[->] (.25,.15)--(1.3,.7);
\end{tikzpicture}
\end{center}
Moreover, by Lemma \ref{lemma:weight-space}
\[
V^\lambda \cong \bigoplus_W\C_{w\cdot \lambda}. 
\]
Because $U(\g)$ is a free $(U(\bar{\n}),Z(\g)\otimes U(\n))$-bimodule of rank $|W|$ (Lemma \ref{U-free}), we have that
\[
\dim H_0(\bar{\n},Y(\lambda,\eta)) = |W|. 
\]
Therefore, the surjective map $V_\lambda \rightarrow H_0(\bar{\n},Y(\lambda,\eta))$ is an isomorphism of $U(\h)$-modules, and the theorem follows. \qed

\vspace{2mm}
\noindent
{\em Geometric proof.}
 We include a geometric proof for the case when $\eta$ is nondegenerate, which, by the Poincare-Birkhoff-Witt theorem and the initial argument of the algebraic proof, implies the general result. 
 
Assume $\eta \in \ch{\mf{n}}$ is nondegenerate. By \cite[Cor. 2.4]{Localization}, the only possible $\h$-weights of $H_0(\overline{\mf{n}}, M(\lambda, \eta))$ are $\mu \in W \cdot \lambda$. For $\mu \in W\cdot \lambda$, we will can compute $H_0(\overline{\mf{n}}, M(\lambda, \eta))$ using Proposition \ref{prop:geometric-lie-homology} and Proposition \ref{prop:localization}. Indeed, for $x \in C(w_0)$, we have 
 \begin{align*}
     H_0(\overline{\mf{n}}, M(\lambda, \eta))_\mu &= T_x(\Delta_{w_0 \cdot \mu + \rho}(M(\lambda, \eta))) \\
     &= T_x(\mc{M}(w_0, w_0 \cdot \mu + \rho, \eta)).
 \end{align*}
 Because $\mc{M}(w_0, w_0 \cdot \mu + \rho, \eta) := i_{w_0!}(\mc{O}_{C(w_0), \eta})$, \[
\dim T_x(\mc{M}(w_0, w_0 \cdot \mu + \rho, \eta)) = 1.
 \]
 This implies the result. \qed

\section{Contravariant pairings of standard Whittaker modules}
\label{contravariant pairings of standard whittaker modules}

In this section, we define and classify contravariant pairings between standard Whittaker modules and Verma modules. Contravariant pairings play an analogous role for $\N$ as contravariant forms for category $\cO$. We prove that contravariant pairings are unique up to scalar multiple, and that $M(\lambda, \eta)$ admits a nonzero contravariant pairing with a Verma module of highest weight $\mu$ if and only if $\mu\in W_\eta \cdot \lambda$. We give an explicit construction of a contravariant pairing between $M(\lambda, \eta)$ and $M(w \cdot \lambda)$ for each $w \in W_\eta$. This construction degenerates to the Shapovalov form on a Verma module when $\eta=0$. We finish by describing properties of the (left) radical of a contravariant pairing. 

Let $\{y_\alpha, x_\alpha\}_{\alpha \in \Sigma^+} \cup \{h_\alpha\}_{\alpha \in \Pi}$ be a Chevalley basis of $\g$ with $x_\alpha \in \g_\alpha$, $y_\alpha \in \g_{-\alpha}$ and $h_\alpha \in \h$ such that $[x_\alpha, y_\alpha]=h_\alpha$. Let $\tau:U(\g)\rightarrow U(\g)$ be the transpose antiautomorphism defined by $\tau(x_\alpha)=y_{\alpha}$ and $\tau(h_\alpha)=h_\alpha$. Let $M(\mu)$ be the Verma module of highest weight $\mu \in \h^*$ and $M(\lambda, \eta)$ the standard Whittaker module (Definition \ref{standard Whittaker module}) corresponding to $(\lambda, \eta) \in \h^* \times \ch\n$.  \begin{definition}
\label{contravariant pairing def}
A \emph{contravariant pairing} between $\mf{g}$-modules $V$ and $W$ is a bilinear pairing
\[
\langle \cdot , \cdot \rangle:V \times W \rightarrow\mathbb{C}
\]
such that
\[
\langle u v,w\rangle = \langle v,\tau(u)w\rangle
\]
for all $u\in U(\g)$, $v\in V$, and $w \in W$.
\end{definition}
\begin{theorem}\label{thm:unique-forms}
Assume $\lambda+\rho\in \h^*$ is regular. There exists a nonzero contravariant pairing between $M(\lambda,\eta)$ and $M(\mu)$ if and only if $\mu \in W_\eta\cdot\lambda$. Moreover, a contravariant pairing between $M(\lambda,\eta)$ and $M(\mu)$ is unique up to scalar multiple. 
\end{theorem}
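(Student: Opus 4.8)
The plan is to identify the space of contravariant pairings $\langle\cdot,\cdot\rangle : M(\lambda,\eta)\times M(\mu)\to\C$ with a concrete $\Hom$-space and then count its dimension using the $\bar{\mf n}$-homology computation of Theorem \ref{thm:homology}. First I would note the standard adjunction: a contravariant pairing between $V$ and $W$ is the same as a $\g$-module map $V\to W^\ast$, where $W^\ast$ carries the $\tau$-twisted dual action. So the assertion is equivalent to
\[
\dim\Hom_\g\big(M(\lambda,\eta),M(\mu)^\ast\big)=\begin{cases}1&\mu\in W_\eta\cdot\lambda\\0&\text{else.}\end{cases}
\]
Because $M(\mu)^\ast$ is (the full dual of) a Verma module, a map from a cyclic module $M(\lambda,\eta)$ into it is determined by where the cyclic Whittaker generator $\omega$ goes, and contravariance forces the image to be a Whittaker vector of type $\eta$ in $M(\mu)^\ast$ that is also annihilated by the appropriate relations. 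I would rather package this as follows: a contravariant pairing $M(\lambda,\eta)\times M(\mu)\to\C$ restricts, via the $\tau$-twist, to a pairing that kills $\bar{\mf n}M(\mu)$ on the right (since $\tau(\bar{\mf n})=\n$ and $\n$ acts on $M(\lambda,\eta)$ through $\eta$... more precisely one checks the pairing factors appropriately), reducing everything to a pairing between a quotient of $M(\lambda,\eta)$ and $H_0(\bar{\mf n},M(\mu))^\ast=\C_\mu^\ast=\C$. Dually, and more symmetrically, I would use that contravariant pairings $V\times W\to\C$ correspond to $\h$-module maps $H_0(\bar{\mf n},V)\to H_0(\bar{\mf n},W)^\ast$ compatible with the full module structure — but the cleanest route is the one below.

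Concretely, here are the steps in order. (1) Establish the natural isomorphism between the space of contravariant pairings $M(\lambda,\eta)\times M(\mu)\to\C$ and $\Hom_\g(M(\lambda,\eta),M(\mu)^\ast)$; this is a formal unwinding of Definition \ref{contravariant pairing def}. (2) Show that any $\phi\in\Hom_\g(M(\lambda,\eta),M(\mu)^\ast)$ is determined by $\phi(\omega)$, which must be a Whittaker vector of type $\eta$ in $M(\mu)^\ast$ lying in the weight space forced by $\z$-equivariance; equivalently, evaluating the pairing, $\phi$ is determined by the linear functional $v\mapsto\langle v,v_\mu^+\rangle$ on $M(\lambda,\eta)$ where $v_\mu^+$ is a highest-weight generator of $M(\mu)$, and this functional must vanish on $\bar{\mf n}M(\lambda,\eta)$ by contravariance (since $\langle v, y_\alpha w\rangle=\langle x_\alpha v,w\rangle$ and $v_\mu^+$ is annihilated by $\n$ on the left... here one uses $\tau$ to move the $\bar{\mf n}$ of $M(\mu)$ over). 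So the pairing descends to a pairing $H_0(\bar{\mf n},M(\lambda,\eta))\times \C v_\mu^+\to\C$, i.e. to a linear functional on the $\mu$-weight space of $H_0(\bar{\mf n},M(\lambda,\eta))$. (3) Apply Theorem \ref{thm:homology}: $H_0(\bar{\mf n},M(\lambda,\eta))=\bigoplus_{w\in W_\eta}\C_{w\cdot\lambda}$ as $\h$-modules, so its $\mu$-weight space is $1$-dimensional if $\mu\in W_\eta\cdot\lambda$ (using regularity of $\lambda+\rho$, so the weights $w\cdot\lambda$ are distinct) and $0$ otherwise. This immediately gives $\dim\le 1$, and $=0$ unless $\mu\in W_\eta\cdot\lambda$. (4) For existence when $\mu\in W_\eta\cdot\lambda$: exhibit a nonzero pairing. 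The quickest argument is via the Whittaker functor — by Proposition \ref{prop:Whittaker-functor}, $\overline{\Gamma}_\eta(M(\mu))=M(\lambda,\eta)$ for the appropriate representative, and the classical Shapovalov form $M(\mu)\to M(\mu)^\ast$ together with functoriality and the fact that $\overline{\Gamma}_\eta$ interacts well with duals produces a nonzero map $M(\lambda,\eta)\to M(\mu)^\ast$; alternatively one constructs it by hand (this is exactly what Theorem \ref{thm:shapovalov}, cited as forthcoming, will do explicitly). Either way, existence reduces to surjectivity of the natural map $M(\lambda,\eta)\twoheadrightarrow$ onto something pairing nontrivially with $v_\mu^+$, which follows from $L(\lambda,\eta)$ being a composition factor of $M(\mu)^\ast$ in the relevant block.

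The main obstacle is step (2): carefully checking that a contravariant pairing genuinely factors through $H_0(\bar{\mf n},M(\lambda,\eta))$ on the left and through the highest-weight line of $M(\mu)$ on the right, with the $\tau$-twist bookkeeping correct. One must be careful that $M(\mu)^\ast$ is the \emph{full} linear dual (not the restricted dual), so $\phi(\omega)$ need not be $\bar{\mf n}$-finite a priori; the argument that $\phi$ is nonetheless determined by finitely much data uses the $\z$-weight decomposition of $M(\lambda,\eta)$ from Proposition \ref{whittaker facts}(iv) together with the fact that $M(\mu)^\ast$ has an $\h$-weight (hence $\z$-weight) decomposition into finite-dimensional pieces, forcing $\phi$ to land in the restricted dual. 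Once the pairing is known to descend to the $\mu$-weight space of $H_0(\bar{\mf n},M(\lambda,\eta))$, Theorem \ref{thm:homology} closes the argument with no further work. Existence is comparatively soft, but to keep the exposition self-contained I would defer the explicit formula to Theorem \ref{thm:shapovalov} and here only argue nonvanishing abstractly.
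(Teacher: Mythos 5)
Your uniqueness/vanishing argument is sound: since $M(\mu)$ is cyclic on $v_\mu^+$, a contravariant pairing is determined by the functional $v\mapsto\langle v,v_\mu^+\rangle$, which by contravariance kills $\bar{\n}M(\lambda,\eta)$ and is an $\h$-weight-$\mu$ functional, hence factors through the $\mu$-weight space of $H_0(\bar{\n},M(\lambda,\eta))$; Theorem \ref{thm:homology} and regularity of $\lambda+\rho$ then give dimension at most one, and zero unless $\mu\in W_\eta\cdot\lambda$. But this only produces an \emph{injection} from the space of pairings into $\bigl(H_0(\bar{\n},M(\lambda,\eta))_\mu\bigr)^\ast$, so it proves nothing about existence, and your step (4) does not close that gap. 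The Whittaker-functor route you suggest — restricting the extended Shapovalov pairing on $\overline{M(w\cdot\lambda)}\times M(w\cdot\lambda)$ along $\overline{\Gamma}_\eta(M(w\cdot\lambda))=M(\lambda,\eta)$ — actually yields the \emph{zero} pairing unless $w$ is the longest element of $W_\eta$ (this is precisely the content of Remark \ref{rmk:contravariant-pairings-via-whittaker-functors}), so it cannot give existence for all $\mu\in W_\eta\cdot\lambda$. The remark that ``$L(\lambda,\eta)$ is a composition factor of $M(\mu)^\ast$ in the relevant block'' is not an argument: composition factors do not produce morphisms. Deferring existence entirely to the explicit construction of Theorem \ref{thm:shapovalov} is logically admissible (that construction is independent), but then your proof of the present theorem is not self-contained, and the well-definedness of that construction is exactly the nontrivial point you would be postponing.

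The paper avoids this asymmetry by dualizing in the other direction: it identifies pairings with $\Hom_\g(M(\mu),M(\lambda,\eta)^\ast)$ rather than $\Hom_\g(M(\lambda,\eta),M(\mu)^\ast)$. By the universal (Frobenius) property of the Verma module this Hom-space \emph{equals} $H^0(\n,M(\lambda,\eta)^\ast)_\mu$, which by tensor-hom adjunction is $H_0(\bar{\n},M(\lambda,\eta))^\ast_\mu$; every step is an isomorphism, so Theorem \ref{thm:homology} delivers the exact dimension, giving existence and uniqueness simultaneously with no auxiliary construction. In your direction a map out of $M(\lambda,\eta)$ is constrained by the full annihilator of its Whittaker generator (Kostant's theorem, the $\xi_\eta(\lambda)$-relations), which is why surjectivity onto the candidate functional space is not formal; in the paper's direction the only relations to check are those of the Verma highest weight vector, which are absorbed by Frobenius reciprocity. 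If you reorganize step (1) to use $\Hom_\g(M(\mu),M(\lambda,\eta)^\ast)$, your steps (2)--(3) become the paper's proof and the existence problem disappears; your concern about the full versus restricted dual also evaporates, since one lands directly in Whittaker vectors of $M(\lambda,\eta)^\ast$ without needing $\bar{\n}$-finiteness.
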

\begin{proof}
Let $\Psi$ be the space of contravariant pairings between $M(\lambda,\eta)$ and $M(\mu)$. We have the following canonical isomorphism
\begin{align*}
   \Psi&\rightarrow \Hom_\g (M(\mu),M(\lambda,\eta)^\ast)\\
   \langle \cdot, \cdot\rangle&\mapsto \phi(y)(\cdot):= \langle  \cdot,y \rangle.
\end{align*}
Moreover,
\begin{align*}
\Hom_\g (M(\mu),M(\lambda,\eta)^\ast) & = \Hom_\mathfrak{b}(\mathbb{C}_{\mu}, M(\lambda,\eta)^\ast)\\
&= H^0(\n, M(\lambda,\eta)^\ast)_{\mu}\\
& = H_0(\bar{\n},M(\lambda,\eta))^\ast_{\mu}.
\end{align*}
The last equality above follows from tensor-hom adjunction and the $\g$-module structure on $X^\ast$ (which accounts for the duality between $\n$ and $\bar{\n}$), where we identify $H^0(\n,X^\ast)$ as $\Hom_\n(\C,\Hom_\C(X,\C))$ and $H_0(\bar{\n},X)$ as $X\otimes_{\bar{\n}}\C$, see Definition \ref{Lie algebra homology} and \ref{twisted Lie algebra cohomology}. The result then follows from Theorem~\ref{thm:homology}. 
\end{proof}

We will now give an explicit construction of a nonzero contravariant pairing between $M(\lambda,\eta)$ and $M(w\cdot\lambda)$ for $w\in W_\eta$ which generalizes the Shapovalov form on Verma modules. We start with some preparatory results. 

\begin{lemma}\label{lemma:twisted-HCproj}
Let $\ker\eta$ be the kernel of $\eta:U(\n)\rightarrow\C$. There is a direct sum decomposition
\[
U(\g)=U(\h)\oplus (\bar{\n}U(\g)+U(\g)\ker\eta).
\]
Denote by $\pi_\eta:U(\g)\rightarrow U(\h)$ projection onto the first coordinate in this decomposition. 
\end{lemma}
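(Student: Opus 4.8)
The plan is to establish the direct sum decomposition by comparing it with the more familiar Poincar\'e--Birkhoff--Witt decomposition $U(\g) = U(\bar{\n}) \otimes U(\h) \otimes U(\n)$ and then using the known structure of $\ker\eta$ inside $U(\n)$. First I would note that, by the triangular decomposition and the PBW theorem, every element of $U(\g)$ can be written uniquely as a sum of terms $u\, h\, n$ with $u$ a PBW monomial in $U(\bar{\n})$, $h \in U(\h)$, and $n$ a PBW monomial in $U(\n)$; collecting the terms where $u = n = 1$ gives a vector-space decomposition $U(\g) = U(\h) \oplus K$, where $K$ is spanned by PBW monomials $u\,h\,n$ with $(u,n) \neq (1,1)$. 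The crux is then to identify $K$ with $\bar{\n}U(\g) + U(\g)\ker\eta$.

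Next I would prove the two inclusions. For $K \subseteq \bar{\n}U(\g) + U(\g)\ker\eta$: a basis monomial $u\,h\,n$ with $u \neq 1$ lies in $\bar{\n}U(\g)$ since $u \in \bar{\n}U(\bar{\n})$; a monomial $h\,n$ with $n \neq 1$ can be handled by writing $n$ in terms of the augmentation, i.e. $n = \eta(n) + (n - \eta(n))$ with $n - \eta(n) \in \ker\eta$ — but since $\eta(n)=0$ for a nonconstant PBW monomial $n$ when $\eta$ is a character only on $\n$... here I would be careful: $\eta\colon U(\n)\to\C$ is the algebra homomorphism extending the Lie character, so $\eta(n)$ need not vanish for higher monomials. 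The clean way is instead: for any $x \in U(\g)$, write $x = \pi_{\mathrm{PBW}}(x) + (x - \pi_{\mathrm{PBW}}(x))$ where $\pi_{\mathrm{PBW}}$ projects onto the $U(\h)$-component in the PBW decomposition $U(\bar{\n})\otimes U(\h)\otimes U(\n)$; then show $x - \pi_{\mathrm{PBW}}(x) \in \bar{\n}U(\g) + U(\g)\ker\eta$ by induction on PBW degree, peeling off either a leading $\bar{\n}$-factor (landing in $\bar{\n}U(\g)$) or, when there is no $\bar{\n}$-factor, a trailing $U(\n)$-factor $n$ and using $hn = h(n - \eta(n)) + \eta(n)h$, the first term being in $U(\g)\ker\eta$ and the second having strictly smaller $U(\n)$-degree. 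For the reverse inclusion $\bar{\n}U(\g) + U(\g)\ker\eta \subseteq K$, i.e. $(\bar{\n}U(\g) + U(\g)\ker\eta) \cap U(\h) = 0$: an element of $\bar{\n}U(\g)$ has, in PBW form, no term with trivial $\bar{\n}$-component, so $\bar{\n}U(\g) \subseteq K$ is immediate; for $U(\g)\ker\eta$, I would argue that modulo $\bar{\n}U(\g)$ we may work in $U(\h)\otimes U(\n) \cong U(\h \oplus \n)$, where the image of $\ker\eta$ generates a right ideal whose PBW-$U(\h)$-component is zero — concretely, $U(\h \oplus \n)\ker\eta$ is the kernel of the surjection $U(\h\oplus\n)\to U(\h)$, $h\otimes n \mapsto h\,\eta(n)$, which splits the inclusion $U(\h)\hookrightarrow U(\h\oplus\n)$, so $U(\h\oplus\n) = U(\h)\oplus U(\h\oplus\n)\ker\eta$ and the intersection with $U(\h)$ is zero.

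The main obstacle I anticipate is the bookkeeping in the reverse inclusion: showing that $U(\g)\ker\eta$ contributes nothing to the $U(\h)$-summand requires knowing that the natural algebra map $U(\p_\eta)\to U(\lev_\eta)$-style projection, or here more simply the map $U(\h)\otimes U(\n)\to U(\h)$ induced by $\eta$, is well-defined and algebra-like enough that its kernel is exactly the right ideal generated by $\ker\eta$. This is where I would invoke, or reprove, the fact that $U(\h\oplus\n)$ is free as a right $U(\n)$-module with $U(\n)\to\C_\eta$ inducing $U(\h\oplus\n)\otimes_{U(\n)}\C_\eta \cong U(\h)$, so that $U(\h\oplus\n)\ker\eta = \ker(U(\h\oplus\n)\to U(\h))$. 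Once this is in hand the decomposition follows formally, and $\pi_\eta$ is simply the composite $U(\g)\twoheadrightarrow U(\h\oplus\n)\twoheadrightarrow U(\h)$ where the first map kills $\bar{\n}U(\g)$. I would remark that when $\eta = 0$ this recovers the standard Harish-Chandra-type projection $U(\g) = U(\h)\oplus(\bar{\n}U(\g) + U(\g)\n)$ used in the construction of the Shapovalov form, which is the sense in which the present lemma is the right generalization.
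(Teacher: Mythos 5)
The organizing claim of your first half is false when $\eta\neq 0$, and you should not let it stand: the PBW complement $K$ of $U(\h)$ is \emph{not} equal to $\bar{\n}U(\g)+U(\g)\ker\eta$, and it is not true that $x-\pi_{\mathrm{PBW}}(x)\in \bar{\n}U(\g)+U(\g)\ker\eta$. Take $x=x_\alpha$ for a simple root $\alpha$ with $\eta(x_\alpha)\neq 0$: then $\pi_{\mathrm{PBW}}(x_\alpha)=0$, but if $x_\alpha$ lay in $\bar{\n}U(\g)+U(\g)\ker\eta$, then so would the nonzero constant $\eta(x_\alpha)=x_\alpha-(x_\alpha-\eta(x_\alpha))$, contradicting exactly the trivial-intersection statement you prove in your second half. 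Your own peeling induction, carried to the end, leaves behind a leftover $U(\h)$-term, so what it actually establishes is the spanning statement $U(\g)=U(\h)+\bar{\n}U(\g)+U(\g)\ker\eta$; the $U(\h)$-component it produces is $\pi_\eta(x)$, not $\pi_{\mathrm{PBW}}(x)$, and the discrepancy between these two projections for $\eta\neq 0$ is precisely the content of the lemma. So drop the identification of the complement with $K$, restate the first half as spanning, and combine it with your intersection argument; then the lemma follows. In fact no induction is needed: for a PBW monomial, $y^{\bar{I}}h^Jx^K=y^{\bar{I}}h^J(x^K-\eta(x^K))+\eta(x^K)y^{\bar{I}}h^J$ with $x^K-\eta(x^K)\in\ker\eta$ because $\eta:U(\n)\rightarrow\C$ is an algebra homomorphism, which is exactly the paper's one-step argument.

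Your directness argument, by contrast, is correct in substance and supplies more than the paper does, since the paper simply asserts $U(\h)\cap(\bar{\n}U(\g)+U(\g)\ker\eta)=0$. Two points to make explicit when writing it up. First, passing modulo $\bar{\n}U(\g)$ requires $U(\g)\ker\eta\subseteq U(\mf{b})\ker\eta+\bar{\n}U(\g)$; this follows from $U(\g)=U(\mf{b})\oplus\bar{\n}U(\g)$ (PBW) together with the fact that $\bar{\n}U(\g)$ is a right ideal. Second, the map $U(\h)\otimes U(\n)\rightarrow U(\h)$, $h\otimes n\mapsto\eta(n)h$, is only a splitting of right $U(\n)$-modules (the coinvariants identification $U(\mf{b})\otimes_{U(\n)}\C_\eta\cong U(\h)$), not an algebra homomorphism: $\ker\eta$ is not $\ad\h$-stable when $\eta\neq 0$, so $U(\mf{b})\ker\eta$ is not a two-sided ideal. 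Keep the module-theoretic phrasing; since $\ker\eta$ is two-sided in $U(\n)$ and $U(\n)=\C\oplus\ker\eta$, one gets $U(\mf{b})\ker\eta=U(\h)\otimes\ker\eta$ and hence $U(\mf{b})=U(\h)\oplus U(\mf{b})\ker\eta$, which is what the reduction needs.
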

\begin{proof} Choose an order on the set of roots so that 
\[
\{ y^{\bar{I}} h^J x^K:= y_{\alpha_n}^{i_n} \cdots y_{\alpha_1}^{i_1} h_{\alpha_1}^{j_1} \cdots h_{\alpha_r}^{j_r} x_{\alpha_1}^{k_1} \cdots x_{\alpha_n}^{k_n} \mid i_\ell, j_\ell, k_\ell \in \Z_{\geq 0} \}
\]
is a Poincar\'{e}--Birkhoff--Witt basis of $U(\g)$. Here $I=(i_1, \ldots, i_n)$, $J=(j_1, \ldots, j_r)$, and $K=(k_1, \ldots, k_n)$ are multi-indices, $\bar{I}=(i_n, \ldots, i_1)$, and $y=(y_{\alpha_n}, \ldots , y_{\alpha_1})$, $h=(h_{\alpha_1}, \ldots, h_{\alpha_r})$, $x=(x_{\alpha_1}, \ldots, x_{\alpha_n})$. We can write $x^{\bar{I}}h^Jx^K$ as
\[
y^{\bar{I}}h^Jx^K = y^{\bar{I}}h^J (x^K-\eta(x^K)) + \eta(x^K)y^{\bar{I}}h^J. 
\]
The terms $y^{\bar{I}}h^J (x^K-\eta(x^K))$ and $ \eta(x^K)y^{\bar{I}}h^J$ are elements in $U(\h)  +\bar{\n}U(\g)+ U(\g) \ker \eta$. By extending linearly, we can write any element of $U(\g)$ as a sum of a vector in $U(\h)$, a vector in $\bar{\n} U(\g)$ and a vector in $U(\g) \ker \eta$. The intersection 
\[
U(\h) \cap (\bar{\n} U(\g) + U(\g) \ker \eta) = 0,
\]
so the sum is direct.
\end{proof}

\begin{lemma}\label{lemma:induction-annihilator}
Let $\mf{p}$ be a parabolic subalgebra of $\mf{g}$ and $\mf{l}$ the corresponding reductive Levi subalgebra. The Lie algebra $\mf{p}$ decomposes as $\mf{p} = \mf{l} \oplus \mf{n}$ for a nilpotent subalgebra $\mf{n}$ of $\mf{p}$. Let $N$ be a $U(\mf{l})$-module and $M$ the $U(\mf{g})$-module parabolically induced from $N$,
\[
M=U(\mf{g}) \otimes_{U(\mf{p})} N
\]
where $N$ is considered as a $U(\mf{p})$-module via the natural projection map $\mf{p} \rightarrow \mf{l}$. For $x\in N$ and $v=1\otimes x \in M$, 
\[
\Ann_{U(\mf{g})} v = U(\g)\n+ U(\g) \Ann_{U(\mf{l})} x.
\]

\end{lemma}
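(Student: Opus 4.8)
The plan is to prove the two inclusions separately, with the easy direction first. For ``$\supseteq$'': if $u \in U(\g)\n$, write $u = \sum u_i n_i$ with $n_i \in \n$; since $\n \subset \mf{p}$ acts as $0$ on $N$ (recall $N$ is a $U(\p)$-module via $\p \to \mf{l}$, so $\n$ acts trivially), we get $u\cdot v = \sum u_i n_i \cdot (1 \otimes x) = \sum u_i \otimes (n_i \cdot x) = 0$. Similarly if $a \in \Ann_{U(\mf{l})} x$, lift it to $U(\p)$ (it acts on $N$ through $\mf{l}$) and then $u a \cdot v = u \otimes (a\cdot x) = 0$ for any $u \in U(\g)$. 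This gives $U(\g)\n + U(\g)\Ann_{U(\mf{l})} x \subseteq \Ann_{U(\g)} v$.

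For the reverse inclusion, I would use the PBW decomposition coming from $\g = \bar{\n}' \oplus \mf{l} \oplus \n$ (where $\bar{\n}'$ is the opposite nilradical), which gives $U(\g) \cong U(\bar{\n}') \otimes_\C U(\mf{l}) \otimes_\C U(\n)$ as vector spaces, and correspondingly $U(\g) = U(\bar{\n}')\,U(\mf{l}) \oplus U(\g)\n$ since $U(\g)\n$ is spanned by PBW monomials with a nontrivial $U(\n)$-part. Likewise, as a $U(\mf{l})$-module (acting on the right), the summand $U(\bar{\n}')\,U(\mf{l})$ is free with basis a PBW basis of $U(\bar{\n}')$, so it decomposes $U(\mf{l})$-equivariantly as $\bigoplus_\alpha b_\alpha U(\mf{l})$. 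Now take $u \in \Ann_{U(\g)} v$. Modulo $U(\g)\n$ we may assume $u = \sum_\alpha b_\alpha a_\alpha$ with $a_\alpha \in U(\mf{l})$. The induced module decomposes as $M = \bigoplus_\alpha b_\alpha \otimes N$ (PBW again, since $M \cong U(\bar{\n}') \otimes_\C N$ as vector spaces), and the action of $b_\alpha a_\alpha$ on $v = 1 \otimes x$ lands in the summand $b_\alpha \otimes N$ via $b_\alpha a_\alpha \cdot (1\otimes x) = b_\alpha \otimes (a_\alpha \cdot x)$ — here one must be slightly careful: $a_\alpha \in U(\mf{l}) \subset U(\p)$ moves across the tensor product, and $b_\alpha \in U(\bar{\n}')$ does not interact with the other $b_\beta$ summands. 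Since the summands $b_\alpha \otimes N$ are independent, $u\cdot v = 0$ forces $a_\alpha \cdot x = 0$ in $N$ for every $\alpha$, i.e. $a_\alpha \in \Ann_{U(\mf{l})} x$. Hence $u = \sum_\alpha b_\alpha a_\alpha \in U(\bar{\n}')\,\Ann_{U(\mf{l})} x \subseteq U(\g)\,\Ann_{U(\mf{l})} x$, and combined with the discarded $U(\g)\n$ term this gives $u \in U(\g)\n + U(\g)\Ann_{U(\mf{l})} x$.

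The main subtlety — not a deep obstacle but the point requiring care — is the bookkeeping of how elements move across the tensor product $U(\g) \otimes_{U(\p)} N$: one needs that $b_\alpha a_\alpha \cdot (1 \otimes x)$ genuinely equals $b_\alpha \otimes (a_\alpha x)$ with no cross terms, which relies on $a_\alpha \in U(\p)$ commuting past the tensor and on $b_\alpha \in U(\bar\n')$ being a free basis for $M$ over $\C$ relative to $N$. Everything else is a routine application of the PBW theorem. Note this lemma is exactly the tool needed to handle $\Ann_{U(\g)} \omega$ for the generating Whittaker vector $\omega$ of $M(\lambda,\eta)$, since there $\p = \p_\eta$, $\mf{l} = \mf{l}_\eta$, and $\Ann_{U(\mf{l}_\eta)} x$ is controlled by the infinitesimal character and the character $\eta$ via the construction \eqref{nondegenerate whittaker}.
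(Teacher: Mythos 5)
Your proof is correct and takes essentially the same PBW-based approach as the paper: the paper first deduces $\Ann_{U(\g)}v \subseteq U(\g)\Ann_{U(\p)}x$ from the freeness of $U(\g)$ as a right $U(\p)$-module and then splits $\Ann_{U(\p)}x = U(\p)\n + U(\p)\Ann_{U(\mf{l})}x$, whereas you split off $U(\g)\n$ first and then invoke the same freeness via $M \cong U(\bar{\n}')\otimes_\C N$ --- the same ingredients in a slightly different order. If anything, your treatment of the freeness step is more explicit than the paper's rather terse ``so $u = u'a$''.
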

\begin{proof}
By definition of $M$, we have
\[
U(\g)\n+U(\g)\Ann_{U(\mf{l})}x \subseteq \Ann_{U(\g)} v.
\]
We will prove the reverse inclusion. Let $u \in \Ann_{U(\g)}v$. Then $u \otimes x = 0$, so $u=u'a$ for $u' \in U(\g)$ and $a \in \Ann_{U(\mf{p})}x$; i.e. 
\[
\Ann_{U(\g)}v \subseteq U(\g) \Ann_{U(\p)}x.
\]

Since $\n$ acts trivially on $N$ and we have a PBW decomposition $U(\p) = U(\mf{l}) \otimes U(\n)$, 
\[
\Ann_{U(\p)}x = U(\p) \n + U(\p) \Ann_{U(\mf{l})}x.
\]
Hence the reverse inclusion holds. 

\end{proof}
\begin{proposition}[{\cite[Thm. 3.1]{Kostant}}]\label{prop:kostant-annihilator}
Let $\xi(\lambda)=\ker\chi^\lambda\in\Max Z(\g)$, $\eta\in\ch\n$ be nondegenerate, and $\omega\in Y(\lambda,\eta)$ be a nonzero Whittaker vector. Then 
\[
\Ann_{U(\g)}\omega = U(\g)\xi(\lambda)+U(\g)\ker\eta,
\]
where $\ker\eta$ is the kernel of the map $\eta:U(\n)\rightarrow \C$. 
\end{proposition}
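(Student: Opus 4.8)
The plan is to read the statement straight off the explicit induced-module presentation of $Y(\lambda,\eta)$ in (\ref{nondegenerate whittaker}); once that presentation is unwound the result is essentially formal, which is presumably why the paper is content to cite \cite{Kostant}. Since $\eta$ is nondegenerate we have $\lev_\eta=\g$, $\n_\eta=\n$, $Z(\lev_\eta)=Z(\g)$ and $\chi^\lambda_\eta=\chi^\lambda$, so (\ref{nondegenerate whittaker}) becomes $Y(\lambda,\eta)=U(\g)\otimes_{Z(\g)\otimes_\C U(\n)}\C_{\chi^\lambda,\eta}$, with $\omega=1\otimes 1$ satisfying $z\omega=\chi^\lambda(z)\omega$ for $z\in Z(\g)$ and $x\omega=\eta(x)\omega$ for $x\in U(\n)$. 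From these relations the inclusion $U(\g)\xi(\lambda)+U(\g)\ker\eta\subseteq\Ann_{U(\g)}\omega$ is immediate.

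For the reverse inclusion I would first record that multiplication $z\otimes x\mapsto zx$ is a well-defined algebra homomorphism $\iota\colon Z(\g)\otimes_\C U(\n)\to U(\g)$ — well-defined precisely because $Z(\g)$ is central — and that the right $Z(\g)\otimes U(\n)$-module structure on $U(\g)$ appearing in (\ref{nondegenerate whittaker}) is the one obtained through $\iota$. Writing the one-dimensional module as $\C_{\chi^\lambda,\eta}=(Z(\g)\otimes U(\n))/\mf{m}$ with $\mf{m}=\ker(\chi^\lambda\otimes\eta)$, and applying the right exact functor $U(\g)\otimes_{Z(\g)\otimes U(\n)}(-)$ to $0\to\mf{m}\to Z(\g)\otimes U(\n)\to\C_{\chi^\lambda,\eta}\to 0$, one obtains a $U(\g)$-module isomorphism $Y(\lambda,\eta)\cong U(\g)/U(\g)\iota(\mf{m})$ carrying $\omega$ to $1+U(\g)\iota(\mf{m})$; hence $\Ann_{U(\g)}\omega=U(\g)\iota(\mf{m})$. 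The remaining step is the identity $U(\g)\iota(\mf{m})=U(\g)\xi(\lambda)+U(\g)\ker\eta$: one has $\mf{m}=\xi(\lambda)\otimes U(\n)+Z(\g)\otimes\ker\eta$, hence $\iota(\mf{m})=\xi(\lambda)U(\n)+Z(\g)\ker\eta$ inside $U(\g)$, and then $U(\g)\xi(\lambda)U(\n)=U(\g)\xi(\lambda)$ and $U(\g)Z(\g)\ker\eta=U(\g)\ker\eta$ because $\xi(\lambda)\subseteq Z(\g)$ is central (and $1\in U(\n)$).

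An equivalent route, closer in spirit to Lemma~\ref{lemma:induction-annihilator}, is to argue directly: if $u\in\Ann_{U(\g)}\omega$ then $u\otimes 1=0$ in $U(\g)\otimes_{Z(\g)\otimes U(\n)}\C_{\chi^\lambda,\eta}$, which forces $u\in U(\g)\iota(\mf{m})$, after which one concludes with the same bookkeeping identity. Here the decomposition $U(\g)=U(\h)\oplus(\bar{\n}U(\g)+U(\g)\ker\eta)$ of Lemma~\ref{lemma:twisted-HCproj} can replace the Poincar\'{e}--Birkhoff--Witt computations and streamline the argument.

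I do not expect a genuine obstacle here: the statement is formal once $Y(\lambda,\eta)$ is written in the induced form (\ref{nondegenerate whittaker}), and the only points needing (routine) care — well-definedness of $\iota$ and the left-ideal manipulations — reduce entirely to centrality of $Z(\g)$. What is genuinely substantial in \cite{Kostant} is the accompanying fact that $Y(\lambda,\eta)$ is irreducible and is, up to isomorphism, the unique irreducible $\eta$-Whittaker module with infinitesimal character $\chi^\lambda$; that input is not needed for the annihilator computation itself, though it is what makes the proposition useful in the sequel.
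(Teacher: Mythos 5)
The paper gives no internal proof of this proposition (it is imported wholesale from Kostant), so the only question is whether your argument actually delivers the stated result. The formal core of what you wrote is correct: for the canonical cyclic vector $1\otimes 1$ of $Y(\lambda,\eta)=U(\g)\otimes_{Z(\g)\otimes U(\n)}\C_{\chi^\lambda,\eta}$, right-exactness of induction, the identity $\ker(\chi^\lambda\otimes\eta)=\xi(\lambda)\otimes U(\n)+Z(\g)\otimes\ker\eta$, and centrality of $Z(\g)$ do give $Y(\lambda,\eta)\cong U(\g)/\bigl(U(\g)\xi(\lambda)+U(\g)\ker\eta\bigr)$, hence $\Ann_{U(\g)}(1\otimes 1)=U(\g)\xi(\lambda)+U(\g)\ker\eta$, and this is indeed all that the paper needs in Corollary \ref{corollary:annihilators}, where $\omega$ is the canonical generator.

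The gap is that the proposition is stated for an \emph{arbitrary} nonzero Whittaker vector $\omega\in Y(\lambda,\eta)$, and your closing claim that irreducibility ``is not needed for the annihilator computation itself'' is precisely where the argument falls short of that statement. Your computation pins down the annihilator of one particular Whittaker vector; two cyclic generators of a module need not have the same annihilator, so to pass to a general $\omega$ you need either the one-dimensionality of the space of Whittaker vectors in $Y(\lambda,\eta)$ (a genuinely nontrivial part of Kostant's nondegenerate theory, not a formal consequence of the induced presentation) or the irreducibility of $Y(\lambda,\eta)$ (McDowell, already cited in the paper). With irreducibility the patch is short: since $Y(\lambda,\eta)$ has infinitesimal character $\chi^\lambda$ and $\omega$ is a Whittaker vector, one always has $U(\g)\xi(\lambda)+U(\g)\ker\eta\subseteq\Ann_{U(\g)}\omega$; your isomorphism shows this left ideal is maximal exactly because $Y(\lambda,\eta)\cong U(\g)/\bigl(U(\g)\xi(\lambda)+U(\g)\ker\eta\bigr)$ is irreducible, and $\Ann_{U(\g)}\omega$ is a proper left ideal containing it, hence equal to it. So the statement does require the input you set aside, and you should either add this maximality step or restrict the claim to the canonical generator.
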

\begin{corollary}\label{corollary:annihilators}
Let $\eta \in \ch \n$ be arbitrary and recall the map $\xi_\eta:\h^\ast\rightarrow\Max Z(\lev_\eta)$. Let $v=1\otimes 1\in M(w\cdot\lambda)$ and $\omega = 1\otimes 1\otimes 1 \in M(\lambda,\eta)$. We have
\begin{align*}
 \Ann_{U(\g)}v & = U(\g)\n+U(\g)\ker(w\cdot\lambda),\text{ and }\\
 \Ann_{U(\g)}\omega & = U(\g)\ker\eta+U(\g)\xi_\eta(\lambda).
\end{align*}
\end{corollary}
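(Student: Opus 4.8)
The plan is to derive both identities by feeding the parabolic induction formula of Lemma~\ref{lemma:induction-annihilator} into Kostant's annihilator theorem (Proposition~\ref{prop:kostant-annihilator}), with one small structural observation about $\ker\eta$ needed to reconcile the two descriptions of $\Ann_{U(\g)}\omega$.

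For the first identity, I would note that the Borel $\mf{b}$ is a parabolic subalgebra with reductive Levi $\mf{h}$ and nilradical $\n$, and that $M(w\cdot\lambda)=U(\g)\otimes_{U(\mf{b})}\C_{w\cdot\lambda}$ is parabolically induced from the one-dimensional $U(\mf{h})$-module $\C_{w\cdot\lambda}$, whose annihilator in $U(\mf{h})$ is the ideal $\ker(w\cdot\lambda)$. Lemma~\ref{lemma:induction-annihilator} with $\mf{p}=\mf{b}$, $\mf{l}=\mf{h}$, $\n=\n$, and $x=1\in\C_{w\cdot\lambda}$ then gives $\Ann_{U(\g)}v=U(\g)\n+U(\g)\ker(w\cdot\lambda)$ at once.

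For the second identity, $M(\lambda,\eta)=U(\g)\otimes_{U(\mf{p}_\eta)}Y(\lambda,\eta)$ is parabolically induced along $\mf{p}_\eta=\mf{l}_\eta\oplus\n^\eta$, with $\omega=1\otimes x$ for $x=1\otimes 1\in Y(\lambda,\eta)$ the generating Whittaker vector, so Lemma~\ref{lemma:induction-annihilator} reduces the task to computing $\Ann_{U(\mf{l}_\eta)}x$. Here I would apply Proposition~\ref{prop:kostant-annihilator} inside $U(\mf{l}_\eta)$: the Levi $\mf{l}_\eta$ is reductive (permitted by the standing convention), $\eta|_{\n_\eta}$ is a nondegenerate character of the nilradical $\n_\eta$ of a Borel of $\mf{l}_\eta$ because $\Pi_\eta$ is precisely the set of simple roots of $\Sigma_\eta$, and $Y(\lambda,\eta)$ has infinitesimal character $\chi^\lambda_\eta$ by its construction in~\eqref{nondegenerate whittaker}. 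Kostant's theorem then gives $\Ann_{U(\mf{l}_\eta)}x=U(\mf{l}_\eta)\xi_\eta(\lambda)+U(\mf{l}_\eta)\ker(\eta|_{\n_\eta})$, and multiplying by $U(\g)$ on the left (using $U(\g)U(\mf{l}_\eta)=U(\g)$) yields $\Ann_{U(\g)}\omega=U(\g)\xi_\eta(\lambda)+U(\g)\n^\eta+U(\g)\ker(\eta|_{\n_\eta})$.

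The last step is where I expect the only genuine work to lie: identifying $U(\g)\n^\eta+U(\g)\ker(\eta|_{\n_\eta})$ with $U(\g)\ker\eta$. The key point is that $\n^\eta$ is an ideal of $\n$ on which $\eta$ vanishes --- each root of $\Sigma^+-\Sigma_\eta^+$ is either non-simple, so its root space lies in $[\n,\n]\subseteq\ker\eta$, or simple and not in $\Pi_\eta$. Hence the surjection $U(\n)\twoheadrightarrow U(\n/\n^\eta)=U(\n_\eta)$ has kernel $U(\n)\n^\eta$ and intertwines $\eta$ with $\eta|_{\n_\eta}$, so $\ker\eta=U(\n)\n^\eta+\ker(\eta|_{\n_\eta})$ once $U(\n_\eta)$ is embedded in $U(\n)$ via the PBW section. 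Multiplying on the left by $U(\g)$ and using $U(\g)U(\n)=U(\g)$ gives $U(\g)\ker\eta=U(\g)\n^\eta+U(\g)\ker(\eta|_{\n_\eta})$, which completes the identification. I would expect the remaining verifications to be routine; the main things to watch are the distinction between left and two-sided ideals in this final manipulation and the check that $\eta$ genuinely vanishes on $\n^\eta$.
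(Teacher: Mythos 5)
Your proof is correct and takes essentially the same route as the paper's: both annihilators are computed via Lemma~\ref{lemma:induction-annihilator}, with Proposition~\ref{prop:kostant-annihilator} applied inside $U(\lev_\eta)$ to handle $\Ann_{U(\lev_\eta)}(1\otimes 1)$ in $Y(\lambda,\eta)$. Your closing identification $U(\g)\n^\eta + U(\g)\ker(\eta\vert_{\n_\eta}) = U(\g)\ker\eta$ is precisely the step the paper asserts without justification, and your argument for it (that $\n^\eta$ is an ideal of $\n$ on which $\eta$ vanishes, so $\ker\eta = U(\n)\n^\eta + \ker(\eta\vert_{\n_\eta})$) is sound.
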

\begin{proof}
For the first equality, we apply Lemma \ref{lemma:induction-annihilator} to the vector $v = 1\otimes 1\in M(w\cdot\lambda) = U(\g)\otimes_\bb\C_{w\cdot\lambda}$. By definition of $\C_{w\cdot\lambda}$, we have $\Ann_{U(\h)} x = \ker w\cdot\lambda$. Therefore, $\Ann_{U(\g)} v = U(\g)\n+U(\g)\ker (w\cdot\lambda)$. 

For the second equality, we apply Lemma \ref{lemma:induction-annihilator} to $\omega = 1\otimes 1\otimes 1 \in M(\lambda,\eta)$. By Lemma \ref{lemma:induction-annihilator} and Proposition \ref{prop:kostant-annihilator}, we have
\[
\Ann_{U(\g)} \omega = U(\g)\n^\eta + U(\g)(U(\lev_\eta)\ker\eta\vert_{\n_\eta}+ U(\lev_\eta)\xi_\eta(\lambda)). 
\]
Clearly, $U(\g)U(\lev_\eta)\ker \eta\vert_{\n_\eta} = U(\g)\ker \eta\vert_{\n_\eta} $ and $U(\g)U(\lev_\eta)\xi_\eta(\lambda) = U(\g)\xi_\eta(\lambda) $. Moreover, $U(\g)\n^\eta+ U(\g)\ker\eta\vert_{\n_\eta}= U(\g)\ker\eta$. Therefore, 
\[
\Ann_{U(\g)}\omega = U(\g)\ker\eta+U(\g)\xi_\eta(\lambda).
\]
\end{proof}

Let $\omega = 1\otimes 1 \otimes 1 \in M(\lambda,\eta)$ and $v = 1\otimes 1 \in M(w\cdot\lambda)$. For $w \in W_\eta$, define a bilinear pairing between $M(\lambda,\eta)$ and $M(w\cdot\lambda)$ by
\begin{equation}
    \label{contravariant pairing}
\langle x,y\rangle_w:= ((w\cdot\lambda)\circ \pi_\eta)(\tau(u')u),
\end{equation}
where $u, u' \in U(\g)$ are such that $x=u \omega$ and $y = u'v$, and $\pi_\eta$ is the projection map defined in Lemma \ref{lemma:twisted-HCproj}. (Note that the choices of $u, u'$ are not necessarily unique.) 

\begin{theorem}\label{thm:shapovalov}
The bilinear pairing $\langle\cdot,\cdot\rangle_w:M(\lambda,\eta)\times M(w\cdot\lambda)\rightarrow\C$ of equation \ref{contravariant pairing} is well-defined, nonzero, and contravariant. When $\eta=0$, $\langle\cdot,\cdot\rangle_w$ is the Shapovalov form. 
\end{theorem}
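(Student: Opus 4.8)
The statement has three assertions — well-definedness, nonvanishing, and contravariance — plus the degeneration to the Shapovalov form, and I would dispatch them in that order, since well-definedness is the real content and the rest follows quickly.

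\emph{Well-definedness.} The pairing $\langle x,y\rangle_w = ((w\cdot\lambda)\circ\pi_\eta)(\tau(u')u)$ depends on the choices of $u,u'$ with $x=u\omega$, $y=u'v$, so I must check that replacing $u$ by $u+a$ with $a\in\Ann_{U(\g)}\omega$, or $u'$ by $u'+a'$ with $a'\in\Ann_{U(\g)}v$, does not change the value. By Corollary \ref{corollary:annihilators}, $\Ann_{U(\g)}\omega = U(\g)\ker\eta + U(\g)\xi_\eta(\lambda)$ and $\Ann_{U(\g)}v = U(\g)\n + U(\g)\ker(w\cdot\lambda)$. So I need two facts: first, $((w\cdot\lambda)\circ\pi_\eta)$ kills $\tau(u')\cdot a$ for $a$ in the first annihilator; second, it kills $\tau(a')\cdot u$ for $a'$ in the second. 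Applying $\tau$ (which swaps $\n\leftrightarrow\bar\n$, fixes $\h$, and reverses products), the second condition becomes: $\pi_\eta$ composed with $(w\cdot\lambda)$ kills $\tau(u)\cdot(\bar{\n}U(\g) + \ker(w\cdot\lambda)U(\g))$ — wait, more carefully, $\tau(u'a')u = \tau(a')\tau(u')u$ and $\tau(U(\g)\n) = \bar{\n}U(\g)$, $\tau(U(\g)\ker(w\cdot\lambda)) = \ker(w\cdot\lambda)U(\g)$. Now Lemma \ref{lemma:twisted-HCproj} gives $U(\g) = U(\h)\oplus(\bar{\n}U(\g) + U(\g)\ker\eta)$ with $\pi_\eta$ the projection; I must verify that $\bar{\n}U(\g)$, $U(\g)\ker\eta$, $\ker(w\cdot\lambda)U(\g)$ (which, modulo the other two summands, should reduce into $\ker(w\cdot\lambda)\cdot U(\h)$), and $U(\g)\xi_\eta(\lambda)$ all lie in the kernel of $(w\cdot\lambda)\circ\pi_\eta$. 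The first two are literally in $\ker\pi_\eta$. For $\ker(w\cdot\lambda)U(\g)$: using $\pi_\eta(zu) \equiv$ (the $U(\h)$-part), and since $z\in\ker(w\cdot\lambda)\subset U(\h)$ acts on the left, one shows $\pi_\eta(zu)\in \ker(w\cdot\lambda)\,U(\h)$, which $(w\cdot\lambda)$ annihilates — this needs a small argument that $\pi_\eta$ is a left $U(\h)$-module map, which follows from the construction via the PBW basis in Lemma \ref{lemma:twisted-HCproj}. For $U(\g)\xi_\eta(\lambda)$: here I use that the Harish-Chandra homomorphism $p_\eta: Z(\lev_\eta)\to U(\h)$ satisfies $z - p_\eta(z)\in \bar{\n}_\eta U(\lev_\eta) + U(\lev_\eta)\n_\eta$, and $\n_\eta$ contributes to $\ker\eta$ (since $\eta$ restricted to $\n_\eta$ is nondegenerate) while $\bar{\n}_\eta\subset\bar{\n}$; so modulo $\ker\pi_\eta$, multiplication by $z\in\xi_\eta(\lambda)$ on the right reduces to multiplication by $p_\eta(z)$, and $\chi^\lambda_\eta(z)=0$ means $(w\cdot\lambda)(p_\eta(z)) = \chi^{w\cdot\lambda}_\eta(z) = 0$ because $w\in W_\eta$ so $w\cdot\lambda\in W_\eta\cdot\lambda$ gives $\chi^{w\cdot\lambda}_\eta = \chi^\lambda_\eta$. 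This last point — tracking $\xi_\eta(\lambda)$ through $\pi_\eta$ and using $W_\eta$-invariance of $\chi^\lambda_\eta$ — is where I expect the main technical obstacle, since it requires carefully relating the two Harish-Chandra projections $\pi_\eta$ and $p_\eta$.

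\emph{Contravariance and nonvanishing.} Contravariance is then formal: for $t\in U(\g)$, $tx = (tu)\omega$, so $\langle tx,y\rangle_w = ((w\cdot\lambda)\circ\pi_\eta)(\tau(u')tu)$, while $\langle x,\tau(t)y\rangle_w = ((w\cdot\lambda)\circ\pi_\eta)(\tau(\tau(t)u')u) = ((w\cdot\lambda)\circ\pi_\eta)(\tau(u')tu)$ since $\tau$ is an involutive antiautomorphism — these agree. For nonvanishing, take $x=\omega$, $y=v$ (so $u=u'=1$): then $\langle\omega,v\rangle_w = ((w\cdot\lambda)\circ\pi_\eta)(1) = (w\cdot\lambda)(1) = 1 \neq 0$, using $\pi_\eta(1) = 1$. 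By Theorem \ref{thm:unique-forms} this pairing, being nonzero, spans the one-dimensional space of contravariant pairings.

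\emph{Degeneration.} When $\eta=0$ we have $\lev_\eta = \h$, $\n^\eta = \n$, so $\ker\eta = \n\,U(\n) = $ the augmentation ideal of $U(\n)$, and the decomposition of Lemma \ref{lemma:twisted-HCproj} becomes $U(\g) = U(\h)\oplus(\bar{\n}U(\g) + U(\g)\n)$, the standard Harish-Chandra/PBW decomposition, with $\pi_0$ the usual projection onto $U(\h)$. Moreover $M(\lambda,0)=M(\lambda)$ and $w$ ranges over $W_0=\{e\}$, so $w\cdot\lambda=\lambda$ and $\langle x,y\rangle_e = (\lambda\circ\pi_0)(\tau(u')u)$ — this is exactly the standard definition of the Shapovalov form on $M(\lambda)$ (e.g. as in \cite{BGGcatO}). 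I would state this identification with a one-line comparison of definitions.
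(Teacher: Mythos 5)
Your overall architecture matches the paper: reduce well-definedness to the vanishing of $((w\cdot\lambda)\circ\pi_\eta)$ on $\tau(\Ann_{U(\g)}v)$ and $\Ann_{U(\g)}\omega$, use Corollary \ref{corollary:annihilators} to split this into the four pieces $\bar{\n}U(\g)$, $U(\g)\ker\eta$, $\ker(w\cdot\lambda)U(\g)$, $U(\g)\xi_\eta(\lambda)$, and your treatment of the first three (the first two lie in $\ker\pi_\eta$; for the third, $\pi_\eta$ is a left $U(\h)$-module map) is essentially the paper's, as are contravariance, nonvanishing, and the $\eta=0$ degeneration. The gap is exactly where you flagged the ``main technical obstacle'': the piece $U(\g)\xi_\eta(\lambda)$. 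Your reduction --- ``modulo $\ker\pi_\eta$, right multiplication by $z\in\xi_\eta(\lambda)$ reduces to right multiplication by $p_\eta(z)$, because $z-p_\eta(z)\in\bar{\n}_\eta U(\lev_\eta)+U(\lev_\eta)\n_\eta$ and $\n_\eta$ contributes to $\ker\eta$'' --- fails on two counts. First, $\n_\eta\not\subset\ker\eta$: nondegeneracy of $\eta|_{\n_\eta}$ means precisely that $\eta$ does \emph{not} vanish there, so for $x\in\n_\eta$ one has $x=(x-\eta(x))+\eta(x)$ and the scalar survives; for arbitrary $a\in U(\g)$ this gives $a\,(z-p_\eta(z))\equiv\sum_j\eta(x_j)\,a v_j \pmod{U(\g)\ker\eta}$, which is not in $\ker\pi_\eta$, and the alternative form with $\bar{\n}_\eta$ on the left does not help either, since $a$ sits to the left of it, so $a\,\bar{\n}_\eta U(\lev_\eta)\not\subset\bar{\n}U(\g)$. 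Second, even granting the replacement, you would be left needing $(w\cdot\lambda)\circ\pi_\eta$ to kill $U(\g)\,p_\eta(\xi_\eta(\lambda))\subset U(\g)\ker(w\cdot\lambda)$, i.e.\ a right $U(\h)$-equivariance of $\pi_\eta$, which is false: in $\mathfrak{sl}_2$ with $\eta$ nondegenerate, set $c=(w\cdot\lambda)(h_\alpha)$; then $x_\alpha(h_\alpha-c)=(h_\alpha-c-2)x_\alpha$, so $\pi_\eta\bigl(x_\alpha(h_\alpha-c)\bigr)=\eta(x_\alpha)(h_\alpha-c-2)$ and $(w\cdot\lambda)\bigl(\pi_\eta(x_\alpha(h_\alpha-c))\bigr)=-2\eta(x_\alpha)\neq0$, even though $h_\alpha-c\in\ker(w\cdot\lambda)$.

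The missing idea is that one must exploit the centrality of $\xi_\eta(\lambda)$ in $U(\lev_\eta)$ \emph{before} passing to $p_\eta$, and this requires first decomposing the left factor. The paper writes $U(\g)=U(\lev_\eta)\oplus(\bar{\n}^\eta U(\g)+U(\g)\n^\eta)$, so that $\pi_\eta(U(\g)\xi_\eta(\lambda))$ splits into $\pi_\eta(U(\lev_\eta)\xi_\eta(\lambda))$ and $\pi_\eta(U(\g)\n^\eta\xi_\eta(\lambda))$; the latter vanishes because $[\lev_\eta,\n^\eta]\subset\n^\eta$ and $\eta|_{\n^\eta}=0$ force $U(\g)\n^\eta\xi_\eta(\lambda)\subset U(\g)\ker\eta$; the former is reduced, using the $\lev_\eta$-version of Lemma \ref{lemma:twisted-HCproj} and the fact that $\xi_\eta(\lambda)$ commutes with $\ker\eta|_{U(\n_\eta)}$, to $\pi_\eta(U(\h)\xi_\eta(\lambda))$, where the extra factor now sits on the \emph{left}; only then do left $U(\h)$-linearity of $\pi_\eta$, the identity $\pi_\eta|_{Z(\lev_\eta)}=p_\eta$, and $p_\eta(\xi_\eta(\lambda))\subset\ker(w\cdot\lambda)$ for $w\in W_\eta$ close the argument. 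Without this ordering your proof of the vanishing of $((w\cdot\lambda)\circ\pi_\eta)(U(\g)\xi_\eta(\lambda))$ has a genuine hole, so the well-definedness claim is not established as written; the remaining parts of your proposal are fine.
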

\begin{proof}
The bilinear pairing $\langle\cdot,\cdot\rangle_w$ is contravariant by construction. Moreover, $\langle \omega, v\rangle_w = 1$ for $\omega=1\otimes 1 \in M(\lambda,\eta)$ and $v=1\otimes1 \in M(w\cdot\lambda)$. Therefore, the pairings are nonzero. When $\eta = 0$, $W_\eta = 1$, and $\langle \cdot, \cdot \rangle_1: M(\lambda) \times M(\lambda) \rightarrow \C$ is the contravariant form defined by $\langle uv, u'v \rangle_1= \lambda \circ p(\tau(u')u)$, where $p$ is the Harish-Chandra homomorphism. This is exactly the Shapovalov form \cite[Eq. (5)]{Shapovalov}.

We now prove the result for $\eta\neq 0$. Recall the map $\pi_\eta:U(\g)\rightarrow U(\h)$ is given by projection onto the first coordinate in the decomposition $U(\g)=U(\h)\oplus (\bar{\n}U(\g)+U(\g)\ker\eta)$. To show that the pairing is well-defined, we must prove that $\langle x,y\rangle_w$ is independent of the choice of $u,u'\in U(\g)$ such that $x=u\omega$ and $y=u'v$. Choose  $\tilde{u},\tilde{u}'\in U(\g)$ so that $y=u'v = \tilde{u}'v$ and $x=u\omega= \tilde{u}\omega$. To establish that $\langle \cdot, \cdot \rangle_w$ is well-defined, we need to check that 
\[
(w \cdot \lambda) \circ  \pi_\eta (\tau (u' - \tilde{u}')u)=0 \text{ and } (w \cdot \lambda) \circ \pi_\eta (\tau(u')(u-\tilde{u})) = 0.
\]
As $\tau(u' - \tilde{u}')u = \tau( \tau(u)(u' - \tilde{u}')) \in \tau(\Ann_{U(\g)} v)$ and $\tau(u')(u - \tilde{u}) \in \Ann_{U(\g)}\omega$, it suffices to show that
\[
    ((w\cdot\lambda)\circ \pi_\eta)(\tau(\Ann_{U(\g)}v))=0,\text{ and }
    ((w\cdot\lambda)\circ \pi_\eta)(\Ann_{U(\g)}\omega)=0.
\]
Using Corollary \ref{corollary:annihilators}, this reduces to showing 
\begin{align}
\label{first}
     ((w\cdot\lambda)\circ \pi_\eta)(\bar{\n}U(\g))&=0;\\
\label{second}
     ((w\cdot\lambda)\circ \pi_\eta)(\ker(w\cdot\lambda)U(\g))&=0;\\
\label{third}
    ((w\cdot\lambda)\circ \pi_\eta)(U(\g)\ker\eta)&=0;\\
\label{fourth}
    ((w\cdot\lambda)\circ \pi_\eta)(U(\g)\xi_\eta(\lambda))&=0.
\end{align}
Equalities (\ref{first}) and (\ref{third}) are obvious. Recall that $p_\eta:U(\mf{l}_\eta)\rightarrow U(\h)$ is the Harish-Chandra homomorphism of $U(\mf{l}_\eta)$; i.e. projection onto the first coordinate in the decomposition $U(\mf{l}_\eta) = U(\h) \oplus (\bar{\mf{n}}_\eta U(\mf{l}_\eta) + U(\mf{l}_\eta) \n_\eta)$. To establish  (\ref{fourth}), we first note that
\[
\pi_\eta(\xi_\eta(\lambda)) = p_\eta(\xi_\eta(\lambda)) \subset \ker w \cdot \lambda
\]
for $w\in W_\eta$.
The first equality follows from the fact that $\xi_\eta(\lambda) \subset Z(\mf{l}_\eta)$ and $\pi_\eta$ and $p_\eta$ agree on $Z(\mf{l}_\eta)$. (Indeed, let $z \in Z(\mf{l}_\eta)$. By the Poincar\'{e}--Birkhoff--Witt theorem, $z \in U(\mf{h}) \oplus \overline{\mf{n}}_\eta U(\mf{l}_\eta)$, so we can express $z$ as $z=h+xu$ for $h \in U(\mf{h}), x \in \bar{\mf{n}}_\eta$, and $u \in U(\mf{l}_\eta)$, and the element $h \in U(\mf{h})$ in this decomposition is uniquely determined by $z$. As $xu \in \bar{\mf{n}} U(\mf{g}) + U(\mf{g}) \ker{\eta}$, we have $\pi_\eta(z) = h = p_\eta(z)$.) The inclusion follows from well-known properties of the Harish-Chandra homomorphism for $Z(\lev_\eta)$. 

Using the decomposition $U(\g) = U(\mf{l}_\eta) \oplus (\bar{\n}^\eta U(\g) + U(\g) \n^\eta)$, 
we have that 
\[
\pi_\eta(U(\g) \xi_\eta(\lambda)) = \pi_\eta(U(\mf{l}_\eta) \xi_\eta(\lambda)) + \pi_\eta(U(\g) \n^\eta \xi_\eta(\lambda)). 
\]
By \cite[Lem. 1.7]{McDowell}, $[\mf{l}_\eta, \n^\eta] \subset \n^\eta$. This implies that $[U(\mf{l}_\eta), \n^\eta] \subset U(\mf{l}_\eta) \n^\eta$, hence $[\xi_\eta(\lambda), \n^\eta] \subset U(\g) \ker \eta$. From this we conclude that $\pi_\eta(U(\mf{g}) \n^\eta \xi_\eta(\lambda))=0$. 

An application of Lemma \ref{lemma:twisted-HCproj} to $\mf{l}_\eta$ yields the decomposition 
\[
U(\mf{l}_\eta) = U(\h) \oplus (\bar{\n}_\eta U(\mf{l}_\eta) + U(\mf{l}_\eta) \ker \eta|_{U(\n_\eta)}).
\]
Because $\xi_\eta(\lambda)$ commutes with $\ker \eta|_{U(\n_\eta)}$, we can use this decomposition to conclude that 
\[
\pi_\eta(U(\mf{l}_\eta) \xi_\eta(\lambda)) = \pi_\eta(U(\h) \xi_\eta(\lambda)). 
\]
Finally, because $Z(\lev_\eta)\subset U(\lev_\eta)_0 = \{u\in U(\lev_\eta):[h,u]=0\text{ for all } h\in \h\}$, we have
\begin{align*}
\pi_\eta(U(\h)\xi_\eta(\lambda))&= U(\h)\pi_\eta(\xi_\eta(\lambda))\subset \ker w\cdot\lambda. 
\end{align*}
This proves (\ref{fourth}).

It remains to show (\ref{second}). Because $\pi_\eta$ is the identity on $\ker w\cdot \lambda$, we have 
\[
\pi_\eta(\ker w\cdot\lambda)\subset\ker w \cdot \lambda. 
\]
Moreover, because $[\h,\bar{\n}]\subset \bar{\n}$, we have $\pi_\eta(\ker(w\cdot\lambda)\bar{\n}U(\g))=0$. Using the decomposition in Lemma \ref{lemma:twisted-HCproj}, we conclude that 
\[
\pi_\eta(\ker (w\cdot \lambda )U(\g)) = \pi_\eta(\ker (w\cdot \lambda )U(\h))=\pi_\eta(U(\h)\ker (w \cdot\lambda)). 
\]
Again, because $U(\h)\subset U(\g)_0$, we have
\begin{align*}
\pi_\eta(U(\h)\ker w\cdot\lambda) &= U(\h)\pi_\eta(\ker w\cdot \lambda)\subset \ker w \cdot \lambda. 
\end{align*}
This proves (\ref{second}), and the proposition.
\end{proof}
Combining Theorem \ref{thm:unique-forms} and Theorem \ref{thm:shapovalov}, we see that if $\langle\cdot,\cdot\rangle$ is a nonzero contravariant pairing between $M(\lambda,\eta)$ and $M(\mu)$, then $\langle\cdot,\cdot\rangle$ is a scalar multiple the pairing $\langle\cdot,\cdot\rangle_w$ for some $w\in W_\eta$.

\begin{corollary}\label{cor:normalized-w-form}
Assume $\lambda+\rho\in\h^\ast$ is regular. Any contravariant pairing $\langle \cdot, \cdot \rangle:M(\lambda, \eta) \times M(\mu) \rightarrow \C$ is uniquely determined by $\langle \omega , v\rangle$, where $\omega$ and $v$ are the generating Whittaker vectors in $M(\lambda,\eta)$ and $M(\mu)$, respectively. 
\end{corollary}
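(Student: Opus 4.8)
The plan is to deduce this at once from the one-dimensionality established in Theorem~\ref{thm:unique-forms} together with the explicitly normalized pairings of Theorem~\ref{thm:shapovalov}. Let $\Psi$ denote the space of contravariant pairings between $M(\lambda,\eta)$ and $M(\mu)$, and consider the linear map $\mathrm{ev}\colon\Psi\to\C$, $\langle\cdot,\cdot\rangle\mapsto\langle\omega,v\rangle$. The assertion of the corollary is precisely that $\mathrm{ev}$ is injective, so it suffices to check this.

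If $\mu\notin W_\eta\cdot\lambda$, then $\Psi=0$ by Theorem~\ref{thm:unique-forms} and there is nothing to prove. If $\mu\in W_\eta\cdot\lambda$, write $\mu=w\cdot\lambda$ for some $w\in W_\eta$ (such $w$ exists, and is in fact unique since $\lambda+\rho$ is regular, though uniqueness of $w$ is not needed here). By Theorem~\ref{thm:shapovalov}, the pairing $\langle\cdot,\cdot\rangle_w$ is a nonzero element of $\Psi$ with $\langle\omega,v\rangle_w=1$, so $\mathrm{ev}$ is surjective. Since $\dim_\C\Psi=1$ by Theorem~\ref{thm:unique-forms}, any $\langle\cdot,\cdot\rangle\in\Psi$ equals $c\,\langle\cdot,\cdot\rangle_w$ for a unique $c\in\C$, and then $\langle\omega,v\rangle=c$; thus $\mathrm{ev}$ is an isomorphism and the pairing is recovered from the scalar $\langle\omega,v\rangle$.

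To spell out why a single scalar determines the pairing on all of $M(\lambda,\eta)\times M(\mu)$, one uses that $\omega$ and $v$ are cyclic generators (Proposition~\ref{whittaker facts}(iii) and the definition of $M(\mu)$): given $x=u\omega$ and $y=u'v$ with $u,u'\in U(\g)$, contravariance yields $\langle x,y\rangle=\langle\omega,\tau(u)u'v\rangle$, so the pairing is determined by the linear functional $\langle\omega,\cdot\rangle$ on $M(\mu)$, which under the identification $\Psi\cong\Hom_\g(M(\mu),M(\lambda,\eta)^\ast)$ from the proof of Theorem~\ref{thm:unique-forms} is pinned down by its value at $v$. I do not expect any genuine obstacle: the corollary is a formal consequence of Theorems~\ref{thm:unique-forms} and~\ref{thm:shapovalov}, and the only point requiring a word of care is the degenerate case $\Psi=0$, handled trivially above.
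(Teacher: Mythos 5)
Your proposal is correct and matches the paper's (implicit) argument: the corollary follows immediately from Theorem~\ref{thm:unique-forms} (the space of pairings is zero or one-dimensional) together with the normalized pairing $\langle\cdot,\cdot\rangle_w$ of Theorem~\ref{thm:shapovalov} satisfying $\langle\omega,v\rangle_w=1$, so any pairing equals $\langle\omega,v\rangle\cdot\langle\cdot,\cdot\rangle_w$. Your extra remark via cyclicity of $\omega$ and $v$ is consistent with this and adds nothing problematic.
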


\begin{proposition}\label{prop:radicals}
Let $\lambda\in\h^\ast$ and $\langle \cdot, \cdot \rangle_w$ be the contravariant pairing between $M(\lambda,\eta)$ and $M(w\cdot\lambda)$ defined by equation \ref{contravariant pairing}, for $w\in W_\eta$. 
\begin{enumerate}
    \item If $\nu,\gamma\in\z^\ast$, $\nu\neq\gamma$, $x\in M(\lambda,\eta)_{\nu}$, and $y\in M(w\cdot \lambda)_{\gamma}$, then $\langle x,y\rangle_w=0$.
    \item The left radical $\text{Rad}^L\langle \cdot, \cdot \rangle_w:=\{v\in M(\lambda,\eta):\langle v, M(w\cdot\lambda)\rangle_w=0\}$ is the maximal proper submodule of $M(\lambda,\eta)$. 

\end{enumerate}
\end{proposition}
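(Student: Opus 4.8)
The plan for part (1) is to use contravariance against the centre $\z$ of $\lev_\eta$. Since $\z\subset\h$ and $\tau$ fixes $\h$ pointwise, for $h\in\z$, $x\in M(\lambda,\eta)_\nu$ and $y\in M(w\cdot\lambda)_\gamma$ one has
\[
\nu(h)\langle x,y\rangle_w=\langle hx,y\rangle_w=\langle x,\tau(h)y\rangle_w=\langle x,hy\rangle_w=\gamma(h)\langle x,y\rangle_w,
\]
where I use that the Verma module $M(w\cdot\lambda)$ is $\h$-semisimple, hence $\z$-semisimple, so its $\z$-weight spaces make sense. As $\nu\neq\gamma$ there is an $h\in\z$ with $\nu(h)\neq\gamma(h)$, forcing $\langle x,y\rangle_w=0$. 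Thus (1) is immediate once contravariance (Theorem \ref{thm:shapovalov}) is in hand.

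For part (2) I would first settle the easy inclusion. Contravariance shows that $\text{Rad}^L\langle\cdot,\cdot\rangle_w$ is a $U(\g)$-submodule, and $\langle\omega,v\rangle_w=1\neq0$ (Theorem \ref{thm:shapovalov}) shows it is proper. Next I would record that $M(\lambda,\eta)$ has a genuine largest proper submodule: by Proposition \ref{whittaker facts}(iv) the top $\z$-weight space $M(\lambda,\eta)_{\lambda_\z}$ is isomorphic, as a $U(\lev_\eta)$-module, to the irreducible module $Y(\lambda,\eta)$, and it is generated by $\omega$; since $\lev_\eta$ preserves $\z$-weight spaces, every proper submodule meets $M(\lambda,\eta)_{\lambda_\z}$ trivially (otherwise it contains $\omega$ and equals $M(\lambda,\eta)$), and because taking a fixed $\z$-weight space commutes with sums, the sum $N(\lambda,\eta)$ of all proper submodules is still proper, hence is the unique maximal proper submodule, with $M(\lambda,\eta)/N(\lambda,\eta)=L(\lambda,\eta)$. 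In particular $\text{Rad}^L\langle\cdot,\cdot\rangle_w\subseteq N(\lambda,\eta)$.

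The substance of (2) is the reverse inclusion, which I would get by showing that $\overline{M}:=M(\lambda,\eta)/\text{Rad}^L\langle\cdot,\cdot\rangle_w$ is irreducible; by Proposition \ref{whittaker facts}(v) this forces $\overline{M}\cong L(\lambda,\eta)$ and hence $\text{Rad}^L\langle\cdot,\cdot\rangle_w=N(\lambda,\eta)$. The pairing descends to a contravariant pairing $\overline{M}\times M(w\cdot\lambda)\to\C$ whose left radical vanishes by construction, so $x\mapsto\langle x,-\rangle_w$ is an injective $\g$-map $\overline{M}\hookrightarrow M(w\cdot\lambda)^\ast$; moreover, whenever the descended pairing is nonzero on a submodule $K\subseteq\overline{M}$, restricting functionals gives a nonzero $\g$-map $\theta\colon M(w\cdot\lambda)\to K^\ast$. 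Now suppose $K$ is a nonzero proper submodule of $\overline{M}$. Because $\text{Rad}^L\langle\cdot,\cdot\rangle_w$ is proper, $\overline{M}_{\lambda_\z}\cong Y(\lambda,\eta)$ is still irreducible and generated by the image $\overline{\omega}$ of $\omega$, so as above $K\cap\overline{M}_{\lambda_\z}=0$; that is, $K$ has no vector of $\z$-weight $\lambda_\z$. On the other hand, vanishing of the left radical forces the pairing to be nonzero on $K\times M(w\cdot\lambda)$, so the map $\theta$ exists and is nonzero; since $M(w\cdot\lambda)$ is cyclically generated by its highest weight vector $v$, we must have $\theta(v)\neq0$. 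But $v$ has $\z$-weight $(w\cdot\lambda)|_\z=\lambda_\z$ (because $w\in W_\eta$ shifts $\lambda+\rho$ by an element of $\Z\Pi_\eta$, which restricts to $0$ on $\z$), so $\theta(v)$ is a $\z$-eigenfunctional of weight $\lambda_\z$ on $K$; by the orthogonality of (1) it annihilates every $\z$-weight space $K_\mu$ with $\mu\neq\lambda_\z$, and since $K_{\lambda_\z}=0$ it annihilates all of $K$, contradicting $\theta(v)\neq0$. Hence $\overline{M}$ is irreducible.

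I expect the reverse inclusion in (2) to be the only real obstacle: one needs a device forcing a nonzero submodule of $\overline{M}$ to reach the top $\z$-weight. The embedding $\overline{M}\hookrightarrow M(w\cdot\lambda)^\ast$ coming from left-nondegeneracy of the descended pairing, together with the elementary fact that a $\z$-eigenfunctional of weight $\lambda_\z$ on $K$ is supported on $K_{\lambda_\z}$, is exactly what supplies it; everything else is bookkeeping with the $\z$-weight decomposition of Proposition \ref{whittaker facts}(iv) and the irreducibility of $Y(\lambda,\eta)$.
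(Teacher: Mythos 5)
Your proof is correct and rests on the same mechanism as the paper's: the orthogonality of $\z$-weight spaces from part (1), the fact that every proper submodule of $M(\lambda,\eta)$ misses the top $\z$-weight $\lambda_{\z}$, and cyclicity of $M(w\cdot\lambda)$ together with contravariance to propagate $\langle x,v\rangle_w=0$ to $\langle x,uv\rangle_w=\langle \tau(u)x,v\rangle_w=0$. The only difference is organizational: the paper directly shows the maximal proper submodule $N$ lies in $\mathrm{Rad}^L\langle\cdot,\cdot\rangle_w$ and concludes by properness of the radical, whereas you prove the easy inclusion first and then show the quotient by the radical is irreducible via the dual map $\theta$, which is a contrapositive repackaging of the same computation.
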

\begin{proof}
The proof of (1) is identical to the standard category $\cO$ proof, which is as follows. Suppose $x\in M(\lambda,\eta)_{\nu}$ and $y\in M(w\cdot\lambda)_{\gamma}$ with $\nu\neq \gamma$ as characters of $\z$. By the contravariance of the bilinear pairing, for each $z\in\z$ (recall that $\tau(z)=z$), we have 
\begin{align*}
    \nu(z)\langle x, y\rangle_w &= \langle z x, y\rangle_w = \langle  x, zy\rangle_w =\gamma(z)\langle  x, y\rangle_w.
\end{align*}
Therefore, $\langle  x, y\rangle_w =0$. 

We will now prove (2). If $\eta$ is nondegenerate, or, more generally, if $M(\lambda, \eta)$ is irreducible, then $\text{Rad}^L\langle \cdot, \cdot \rangle_w = 0$. Assume that $\eta$ is degenerate (i.e. $\eta$ vanishes on at least one simple root space) and $M(\lambda,\eta)$ is reducible. Let $N$ be the maximal proper submodule of $M(\lambda,\eta)$ and $x$ a nonzero $\z$-weight vector in $N$ with weight $\mu$. By \cite[Thm. 2.5]{McDowell}, $\mu \neq \lambda$ as characters of $\z$. The generating Whittaker vector (i.e. the highest weight vector) $v$ of $M(w\cdot\lambda)$ has $\z$-weight $w\cdot\lambda$. Moreover, because $w\in W_\eta$, we have that $w\cdot\lambda = \lambda$ as characters of $\z$ (recall that $M(w\cdot\lambda,\eta)\cong M(\lambda,\eta)$ for each $w\in W_\eta$). Therefore, (1) implies 
\[
\langle x, v\rangle_w = 0.
\]
Because $\mf{z}$ acts semisimply on $N$ and the pairing is bilinear, we have $\langle N, v\rangle_w=0$. 

If $y\in M(w\cdot\lambda)$, then there exists $u\in U(\g)$ such that $y = uv$. Therefore, for any $x\in N$,
\[
\langle x, y\rangle_w = \langle x, uv \rangle_w = \langle \tau(u)x, v\rangle_w = 0,
\]
because $\tau(u) x \in N$. Therefore, $N\subseteq \text{Rad}^L\langle \cdot, \cdot \rangle_w$. The result then follows from the fact that the pairing $\langle\cdot,\cdot\rangle_w$ is nonzero and the left radical of the pairing is a submodule of $M(\lambda,\eta)$.

\end{proof}

\begin{remark}
\label{rmk:contravariant-pairings-via-whittaker-functors}
In \cite[\S 3.2]{Matumoto}, Matumoto uses the Shapovalov form to define a contravariant pairing between an irreducible module in category $\cO$ and its completion. A similar construction applies to Verma modules: because weight spaces of $M(w \cdot \lambda)$ are orthogonal with respect to the Shapovalov form, the Shapovalov form extends to a contravariant pairing between the completed Verma module $\overline{M(w \cdot \lambda)}$ and $M(w \cdot \lambda)$. The Whittaker functors of Section \ref{sec:Whittaker-functors} identify standard Whittaker modules with a subspace of the completion of a Verma module. Therefore, we can restrict the above pairing to define a contravariant pairing between $(\overline{M(w \cdot \lambda)})_\eta=\overline{\Gamma}_\eta(M(w\cdot \lambda))$ and $M(w \cdot \lambda)$. By Proposition \ref{prop:Whittaker-functor}, $\overline{\Gamma}_\eta(M(w\cdot \lambda))=M(\lambda, \eta)$ when $w\in W_\eta$. Therefore, for each $w\in W_\eta$, this construction yields a contravariant pairing between $M(\lambda, \eta)$ and $M(w \cdot \lambda)$. However (in the setting where $\lambda+\rho$ is regular and integral), unless $w$ is the longest element of $W_\eta$, the pairing constructed in this way is trivial (i.e. the zero pairing). Therefore, this construction yields exactly one of the contravariant pairings $\langle \cdot, \cdot \rangle_w$ of Theorem \ref{thm:unique-forms}. It is interesting that the other contravariant pairings are not obtained in this way. 
\end{remark}

\section{Costandard modules}
\label{costandard whittaker modules}

In this section we define costandard modules in the category $\N$ (Definition \ref{costandard whittaker modules}) and show that each contravariant pairing induces a $\g$-morphism from standard to costandard modules (Lemma \ref{lemma:costandard-objects}). These $\mf{g}$-morphisms are the algebraic analogues to the canonical maps between standard and costandard twisted Harish-Chandra sheaves introduced in Section \ref{sec:standard-and-simple-HC-sheaves}. Our costandard modules share many of the fundamental properties of dual Verma modules: each has a unique irreducible submodule and the same set of composition factors as the corresponding standard module (Theorem \ref{thm:alg-def-of-costandard-satisfies-universal-properties}). In fact, these two conditions provide a set of universal properties for costandard modules (Theorem \ref{thm:universal-properties}). 

Given a $U(\n)$-module $V$, recall from equation (\ref{eta twisted n-finite vectors}) that $(V)_\eta$ denotes the space of $\eta$-twisted $U(\n)$-finite vectors in $V$:
\begin{equation*}
    (V)_\eta := \{ v\in V:\text{$\forall u\in U(\n)$, $\exists$ $k$ s.t. }(u-\eta(u))^kv=0\}. 
\end{equation*}
\begin{lemma}[{\cite[Lem. 4.2.1]{Kostant}}]
\label{lem: twisted U(n)-finite vectors are g-stable}
For any $U(\g)$-module $V$, the subspace $(V)_\eta$ is a $U(\g)$-submodule.
\end{lemma}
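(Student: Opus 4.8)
The plan is to show that $(V)_\eta$ is stable under the action of $U(\mf{g})$ by checking it is stable under $\mf{g}$, and in fact it suffices to handle the root space generators of $\mf{g}$. So let $v \in (V)_\eta$ and let $X \in \mf{g}$; I want to show $X \cdot v \in (V)_\eta$, i.e.\ that for every $u \in \mf{n}$ there is a $k$ with $(u - \eta(u))^k (X \cdot v) = 0$. The key computational device is the standard commutator identity in $U(\mf{g})$: for $u \in \mf{n}$ and $X \in \mf{g}$,
\[
(u - \eta(u))^k X = \sum_{j=0}^{k} \binom{k}{j} (\ad(u - \eta(u)))^j(X)\,(u - \eta(u))^{k-j},
\]
which holds because $\eta(u)$ is a scalar and so $\ad(u-\eta(u)) = \ad(u)$; here $(\ad(u-\eta(u)))^j(X) \in U(\mf{g})$. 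This reduces the problem to controlling how $\ad(u)$ acts: since $u \in \mf{n}$ and $\mf{n}$ is a nilpotent Lie algebra, $\ad(u)$ acting on $\mf{g}$ is \emph{not} nilpotent in general, but $\ad(\mf{n})$ does act locally nilpotently on $U(\mf{n})$ and raises weights, so iterated brackets $(\ad u)^j(X)$ eventually land in $U(\mf{n})$ and then in higher and higher powers of the augmentation-type ideal.

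The cleanest way to organize this: first reduce to the case $X = y_\alpha \in \bar{\mf{n}}$ for a simple root $\alpha$ (the cases $X \in \mf{h}$ and $X \in \mf{n}$ being easier, since $[\mf{n},\mf{h}] \subseteq \mf{n}$ and $[\mf{n},\mf{n}] \subseteq \mf{n}$, so those $\ad$ orbits stay inside $U(\mf{n})$ from the start). For $X = y_\alpha$, repeatedly bracketing with elements of $\mf{n}$ increases the root-height, so after finitely many brackets $(\ad u)^j(y_\alpha)$ lands in $\mf{n}$, and thereafter further brackets keep it in $\mf{n}$ while continuing to increase height, hence land in $U(\mf{n})\cdot\mf{n}^m$ for growing $m$. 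Concretely, I would prove a sublemma: for fixed $u \in \mf{n}$ and $X \in \mf{g}$ there exist $N$ and, for each $j \geq N$, an expression of $(\ad u)^j(X)$ lying in $U(\mf{n})$, and moreover one can find $M$ so that $(\ad u)^j(X) \in U(\mf{g})\cdot (u-\eta(u))$ once $j$ is large — more precisely, combining with the local finiteness of $v$ itself. Then choosing $k$ large enough in the binomial expansion above forces every term $(\ad(u-\eta(u)))^j(X)\,(u-\eta(u))^{k-j} \cdot v$ to vanish: for small $j$ the factor $(u-\eta(u))^{k-j}$ has large exponent and kills $v$ by the hypothesis $v \in (V)_\eta$ (after first commuting $\ad^j(X)$ past, but that term is then in $U(\mf{n})$ so we can iterate/bound), and for large $j$ the factor $(\ad(u-\eta(u)))^j(X)$ itself becomes divisible by $(u - \eta(u))$ and one commutes it onto $v$.

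I expect the main obstacle to be bookkeeping: making the two competing bounds (the exponent $k-j$ needed to annihilate $v$, versus the number of brackets $j$ needed to push $(\ad u)^j(X)$ into a suitable ideal of $U(\mf{n})$) fit together uniformly. The clean resolution is to argue by induction on the number of root spaces involved, or equivalently to note that $(V)_\eta = \varinjlim$ of finite-dimensional $\ker\eta$-local subspaces and that each generator $X$ only shifts weights by a bounded amount; a slicker alternative is to observe that $(V)_\eta$ is precisely the set of vectors killed by a power of $U(\mf{g})\ker\eta$ intersected appropriately, but the direct commutator computation above is the most self-contained. Since the statement is quoted from \cite[Lem. 4.2.1]{Kostant}, in the write-up I would likely give the short commutator argument for the reader's convenience and cite Kostant for the details of the uniform bound.
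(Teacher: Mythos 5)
Your opening move is the same as the paper's: the binomial identity
\[
(X-\eta(X))^{n}\, Y\cdot v \;=\; \sum_{k=0}^{n}\binom{n}{k}\,(\ad X)^{n-k}(Y)\,(X-\eta(X))^{k}\cdot v ,
\]
valid because $\ad(X-\eta(X))=\ad X$. But you then assert that for $u\in\mf{n}$ the operator $\ad u$ on $\mf{g}$ is ``not nilpotent in general,'' and that is exactly where the argument goes off the rails: it \emph{is} nilpotent. Grade $\mf{g}$ by root height, with $\mf{h}$ in degree $0$; every element of $\mf{n}$ is a sum of positive root vectors, so $\ad u$ strictly raises the degree, and hence $(\ad u)^{\ell}=0$ on the finite-dimensional space $\mf{g}$ for some $\ell$. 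This single fact is the paper's only substantive input, and it finishes the proof immediately: choose $m$ with $(X-\eta(X))^{m}\cdot v=0$ and take $n\ge \ell+m$; then in every summand either $n-k\ge\ell$, so $(\ad X)^{n-k}(Y)=0$, or $k\ge m$, so $(X-\eta(X))^{k}\cdot v=0$. No reduction to simple root vectors, no tracking of where iterated brackets land inside $U(\mf{n})$, and no uniform bound is needed.

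Because you deny the nilpotency, the substitute machinery you sketch is both heavier and incorrect in places. Since $X\in\mf{g}$ and $u\in\mf{n}\subset\mf{g}$, the iterated brackets $(\ad u)^{j}(X)$ always lie in $\mf{g}$ itself --- never in higher-degree parts of $U(\mf{n})$ or in ``powers of an augmentation-type ideal'' --- and the claim that for large $j$ the element $(\ad u)^{j}(X)$ becomes divisible by $(u-\eta(u))$ in $U(\mf{g})$ is false (a PBW degree count shows a nonzero element of $\mf{g}$ essentially never lies in $U(\mf{g})(u-\eta(u))$); the correct statement is simply that it vanishes. Moreover, the ``uniform bound'' that is supposed to reconcile the two regimes of your sum is precisely the step you defer to the citation of Kostant, so as written the proposal has a genuine gap at the one point where an actual argument is required.
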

\begin{proof}
Because the action map $\mf{g} \otimes_\C V \rightarrow V$ is a $\mf{g}$-module morphism, for any $X \in \mf{n}$, $Y \in \mf{g}$, $v \in (V)_\eta$, and $n \in \Z_{\geq 0}$ we have
\[
(X - \eta(X))^n \cdot Y \cdot v = \sum_{k=0}^n \bp n \\ k \ep (\ad X)^{n-k} Y \cdot (X - \eta(X))^k \cdot v.
\]
The adjoint action of $\mf{n}$ on $\mf{g}$ is nilpotent, so there exists $\ell \in \mathbb{Z}$ such that $(\ad X)^\ell Y = 0$. Because $v \in (V)_\eta$, there exists $m \in \mathbb{Z}$ such that $(X - \eta(X))^m \cdot v = 0$. Hence for any $X \in \mf{n}$, $Y \in \mf{g}$, $v \in (V)_\eta$, we can choose $n$ large enough so that every term in the sum is zero. For such an $n$, $(X - \eta(X))^n \cdot Y \cdot v = 0$, and thus $Y \cdot v \in (V)_\eta$. 
\end{proof}



For a $U(\g)$-module $V$, denote by $V^* := \Hom_\C(V, \C)$ the full linear dual of $V$. The space $V^*$ becomes a $U(\g)$-module via the action $u \cdot f( - ) = f (\tau(u) \cdot - )$ for $u \in U(\g)$, $f \in V^*$.

\begin{definition}\label{def:costandard}
For $\lambda\in\h^\ast$, $\eta\in\ch\n$, and $w\in W_\eta$, define the $U(\g)$-module
\[
M^\vee_w(\lambda,\eta):=\left( M(w\cdot\lambda)^\ast\right)_\eta.
\]
We call $M^\vee_w(\lambda, \eta)$ the \emph{$w$-costandard module} corresponding to the standard module $M(\lambda,\eta)$.  
\end{definition}
We will later show, in Corollary \ref{cor:costandard-independence-of-w}, that $M^\vee_w(\lambda,\eta)\cong M^\vee_y(\lambda,\eta)$ for each $w,y\in W_\eta$. Hence, up to isomorphism, the definition of $w$-costandard modules does not depend on the choice of $w\in W_\eta$. However, we will retain notation which reflects the choice of $w\in W_\eta$ until Corollary \ref{cor:costandard-independence-of-w}.

\begin{remark}
We can describe the construction in Definition \ref{def:costandard} in terms of coinduction, similarly to a dual Verma module. Indeed, $M^\vee_w(\lambda, \eta)$ is the $\mf{g}$-submodule of $\eta$-twisted $U(\mf{n})$-finite vectors in
\[
\Hom_\C(U(\mf{g}) \otimes_{U(\mf{b})} \C_\lambda, \C) = \Hom_{U(\mf{b})} (U(\mf{g}), \C_\lambda) = \mathrm{coind}_{U(\mf{b})}^{U(\mf{g})} \C_\lambda.
\]
\end{remark}

\begin{lemma}\label{lemma:costandard-objects}
For any $\lambda\in\h^\ast$, $\eta\in\ch\n$, and $w\in W_\eta$, the $w$-costandard module $M_w^\vee(\lambda,\eta)$ is an object in the category $\N(\xi(\lambda),\eta)$. Moreover, the contravariant pairing  $\langle\cdot,\cdot\rangle_w: M(\lambda,\eta)\times M(w \cdot \lambda) \rightarrow \C$ defined in (\ref{contravariant pairing}) induces a $\g$-morphism
\[
\varphi_w:M(\lambda,\eta)\rightarrow M^\vee_w(\lambda,\eta). 
\]
\end{lemma}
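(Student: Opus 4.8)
The plan is to establish the two assertions in turn. For the first, that $M_w^\vee(\lambda,\eta)$ lies in $\mathcal{N}(\xi(\lambda),\eta)$, I would verify the three defining conditions of category $\mathcal{N}$ together with the infinitesimal character condition. Local $U(\mathfrak{n})$-finiteness (condition (2)) is automatic, since by definition $M_w^\vee(\lambda,\eta)=(M(w\cdot\lambda)^\ast)_\eta$ consists precisely of the $\eta$-twisted $U(\mathfrak{n})$-finite vectors, and Lemma \ref{lem: twisted U(n)-finite vectors are g-stable} guarantees this is a genuine $\mathfrak{g}$-submodule. For local $Z(\mathfrak{g})$-finiteness and the fact that it is annihilated by a power of $\xi(\lambda)$: the full dual $M(w\cdot\lambda)^\ast$ carries the infinitesimal character $\chi^\lambda$ dual to that of $M(w\cdot\lambda)$ (indeed $z$ acts on $M(w\cdot\lambda)$ by $\chi^{w\cdot\lambda}(z)=\chi^\lambda(z)$, and $\tau$ fixes $Z(\mathfrak{g})$, so $z$ acts on the dual by the same scalar), hence $M_w^\vee(\lambda,\eta)$ is annihilated by $\xi(\lambda)$ and in particular locally $Z(\mathfrak{g})$-finite. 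The remaining point is finite generation (condition (1)). Here I would use the $\mathfrak{z}$-weight-space decomposition: the finite-dimensional pieces of $M(w\cdot\lambda)$ dualize to finite-dimensional $\mathfrak{z}$-weight spaces of $M_w^\vee(\lambda,\eta)$ (after intersecting with the $\eta$-finite part), and one can argue as in the standard module case — or, most efficiently, invoke that any module satisfying conditions (2) and (3) with a single generalized infinitesimal character which is moreover "bounded above" in the $\mathfrak{z}$-grading lies in $\mathcal{N}$; alternatively one shows directly that $\varphi_w$ (constructed next) has image a standard-module quotient that is finitely generated, and that $M_w^\vee(\lambda,\eta)$ is itself finitely generated because it embeds, via the contravariant pairing applied to $M(\lambda,\eta)$, as a module whose $\mathfrak{z}$-graded pieces match those of $M(\lambda,\eta)^\ast$ truncated appropriately.

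For the second assertion, the construction of $\varphi_w$ is the expected adjunction: a bilinear pairing $\langle\cdot,\cdot\rangle_w\colon M(\lambda,\eta)\times M(w\cdot\lambda)\to\mathbb{C}$ is the same data as a linear map $M(\lambda,\eta)\to M(w\cdot\lambda)^\ast$, $x\mapsto\langle x,-\rangle_w$, and contravariance of the pairing (Theorem \ref{thm:shapovalov}) says precisely that this map intertwines the $\mathfrak{g}$-actions, where $M(w\cdot\lambda)^\ast$ carries the $\tau$-twisted dual action — indeed $\langle ux,-\rangle_w=\langle x,\tau(u)-\rangle_w=(u\cdot\langle x,-\rangle_w)$. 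So we get a $\mathfrak{g}$-morphism $M(\lambda,\eta)\to M(w\cdot\lambda)^\ast$. The one thing to check is that its image lands inside the $\eta$-twisted $U(\mathfrak{n})$-finite part $(M(w\cdot\lambda)^\ast)_\eta=M_w^\vee(\lambda,\eta)$: but the image of a $\mathfrak{g}$-morphism from $M(\lambda,\eta)$, which is a (twisted) $U(\mathfrak{n})$-locally finite module of type $\eta$ (Proposition \ref{whittaker facts}(iii) makes it $\eta$-Whittaker-generated, hence locally $U(\mathfrak{n})$-finite as an object of $\mathcal{N}$), must consist of $\eta$-twisted $U(\mathfrak{n})$-finite vectors — a morphic image of $\eta$-finite vectors is $\eta$-finite. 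This places $\varphi_w$ as a map $M(\lambda,\eta)\to M_w^\vee(\lambda,\eta)$, as required.

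The main obstacle I anticipate is the finite generation of $M_w^\vee(\lambda,\eta)$ — it is the only nonformal input. The full dual $M(w\cdot\lambda)^\ast$ is enormous and not finitely generated, so one genuinely needs the twist: passing to $\eta$-twisted $U(\mathfrak{n})$-finite vectors cuts it down. The cleanest route is to observe that $M(w\cdot\lambda)^\ast$ decomposes as a product over $\mathfrak{z}$-weights $\prod_{\nu_{\mathfrak{z}}}(M(w\cdot\lambda)_{\nu_{\mathfrak{z}}})^\ast$, that $(\phantom{X})_\eta$ commutes with this decomposition turning the product into the corresponding direct sum of $(M(w\cdot\lambda)_{\nu_{\mathfrak{z}}}^\ast)_\eta$, and that each summand is finite-dimensional — this mirrors Proposition \ref{whittaker facts}(iv) for standard modules and uses that each $\mathfrak{z}$-graded piece of $M(w\cdot\lambda)$ has finite-dimensional $\eta$-finite dual because its $U(\mathfrak{n}_\eta)$-structure is that of (a sum of) the irreducible nondegenerate Whittaker module $Y_{\mathfrak{l}_\eta}$, whose $\eta$-finite dual is one-dimensional. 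With the $\mathfrak{z}$-grading bounded above, a top $\mathfrak{z}$-weight vector generates, giving finite generation. I would structure the proof to isolate this computation, perhaps citing Proposition \ref{whittaker facts}(iv) and the structure of $Y(\lambda,\eta)$ for the finite-dimensionality of each graded piece, and keep the rest of the argument at the level of formal adjunction and the two lemmas already available.
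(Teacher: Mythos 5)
Your second assertion (the construction of $\varphi_w$) is handled exactly as in the paper: contravariance is precisely the statement that $v\mapsto\langle v,\cdot\rangle_w$ intertwines the actions, and since $\varphi_w$ is a $\g$-morphism out of a module whose vectors are $\eta$-twisted $U(\n)$-finite, its image lies in $(M(w\cdot\lambda)^\ast)_\eta$. The infinitesimal character observation (using that $\tau$ fixes $Z(\g)$) is also fine. The genuine gap is in your argument for finite generation, which you correctly identify as the only nonformal input but then handle with claims that are false for $\eta\neq 0$. The $\z$-weight spaces of $M(w\cdot\lambda)$ and of $M^\vee_w(\lambda,\eta)$ are \emph{not} finite-dimensional: by Proposition \ref{whittaker facts}(iv) the graded pieces of a standard module are of the form $U(\bar{\n}^\eta)_{\mu_\z}\otimes Y(\lambda,\eta)$, and $Y(\lambda,\eta)$ is infinite-dimensional whenever $\eta\neq 0$; correspondingly, the top $\z$-piece of $M^\vee_w(\lambda,\eta)$ is an infinite-dimensional nondegenerate Whittaker module for $\lev_\eta$, not a finite-dimensional space. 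Your claim that the ``$\eta$-finite dual'' of $Y_{\lev_\eta}$ is one-dimensional conflates $(Y^\ast)_\eta$ with the space of Whittaker vectors $H^0_\eta(\n,Y^\ast)$, and even the latter has dimension $|W_\eta|$ by Theorem \ref{thm:homology}, not $1$. Finally, the step ``a top $\z$-weight vector generates'' fails already at $\eta=0$: a dual Verma module $M^\vee(\mu)$ is not generated by its highest weight vector when $M(\mu)$ is reducible — the submodule that vector generates is the image of $M(\mu)\to M^\vee(\mu)$, namely $L(\mu)$.

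The paper avoids all of this by a different route: using the canonical isomorphism $\overline{M^\vee(w\cdot\lambda)}\cong M(w\cdot\lambda)^\ast$, one identifies $M^\vee_w(\lambda,\eta)=(M(w\cdot\lambda)^\ast)_\eta$ with $\overline{\Gamma}_\eta(M^\vee(w\cdot\lambda))$, and then membership in $\N(\xi(\lambda),\eta)$ — in particular finite generation — is exactly the content of Backelin's theorem that $\overline{\Gamma}_\eta$ is a (well-defined, exact) functor $\cO_\lambda\rightarrow\N(\xi(\lambda),\eta)$ \cite[\S3]{Backelin}. If you want to keep a direct verification, you would essentially have to reprove that nontrivial result; as written, your finite-generation argument does not go through.
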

\begin{proof}
Recalling the definitions of Section~\ref{sec:Whittaker-functors}, the formal completion of the dual Verma module $M^\vee(w \cdot \lambda)$ is canonically isomorphic to the linear dual of a Verma module (see \cite[\S3]{Backelin}):
\begin{align*}
\overline{M^\vee(w\cdot\lambda)} &\cong ((M^\vee(w\cdot\lambda)^\ast)_{\eta=0})^\ast \\
&\cong M(w\cdot\lambda)^\ast.
\end{align*}
Therefore, the Whittaker functor $\overline{\Gamma}$ applied to a dual Verma module is isomorphic to a costandard module:
\begin{align*}
\overline{\Gamma}_\eta(M^\vee(w\cdot\lambda))&= (M(w\cdot\lambda)^\ast)_\eta\\
&=M^\vee_w(\lambda,\eta). 
\end{align*}
Because $\overline{\Gamma}_\eta$ is a functor from $\cO_\lambda$ to $\N(\xi(\lambda),\eta)$, the $w$-costandard module $M_w^\vee(\lambda,\eta)$ is an object in $\N(\xi(\lambda),\eta)$. 

Each contravariant pairing $\langle \cdot, \cdot \rangle_w$ induces a $\g$-morphism
\[
\varphi_w:M(\lambda,\eta)\rightarrow M(w\cdot\lambda)^\ast, \hspace{2mm} v \mapsto \langle v, \cdot \rangle_w.
\]
Because $\varphi$ is a $\g$-morphism, the image of $\varphi$ is contained in $(M(w\cdot\lambda)^\ast)_\eta$. Therefore each contravariant pairing $\langle\cdot,\cdot\rangle_w$ defines a $\g$-morphism 
\[
\varphi_w:M(\lambda,\eta)\rightarrow M_w^\vee(\lambda, \eta), \hspace{2mm} v \mapsto \langle v, \cdot \rangle_w.
\]
\end{proof}
\begin{theorem}\label{thm:alg-def-of-costandard-satisfies-universal-properties}
Let $\lambda\in\h^\ast$, $\eta\in\ch\n$, and $w\in W_\eta$. Then
\begin{enumerate}
    \item $[M(\lambda,\eta)]=[M^\vee_w(\lambda,\eta)]$ in $\mathcal{K}\N(\xi(\lambda),\eta)$, and
    \item $M_w^\vee(\lambda,\eta)$ contains a unique irreducible submodule, which is isomorphic to $L(\lambda,\eta)$. 
\end{enumerate}
\end{theorem}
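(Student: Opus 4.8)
The plan is to establish both statements by transferring facts about dual Verma modules through the Whittaker functor $\overline{\Gamma}_\eta$, exactly as was done in the proof of Lemma \ref{lemma:costandard-objects}. For statement (1), I would start from the identity $\overline{\Gamma}_\eta(M^\vee(w\cdot\lambda)) = M^\vee_w(\lambda,\eta)$ already recorded in the proof of Lemma \ref{lemma:costandard-objects}, together with $\overline{\Gamma}_\eta(M(w\cdot\lambda)) = M(\lambda,\eta)$ from Proposition \ref{prop:Whittaker-functor}. Since $M(w\cdot\lambda)$ and its dual $M^\vee(w\cdot\lambda)$ have the same composition factors in $\mc{O}_\lambda$ (a standard fact: duality fixes each simple $L(\nu)$ and is exact), and since $\overline{\Gamma}_\eta$ is exact, applying $\overline{\Gamma}_\eta$ to a composition series of each module gives filtrations whose subquotients are the $\overline{\Gamma}_\eta$-images of the simple modules $L(\nu)$, $\nu \in W_\lambda\cdot\lambda$. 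By Backelin's results (\cite[Prop. 6.9]{Backelin}), $\overline{\Gamma}_\eta(L(\nu))$ is either $0$ or a simple module $L(\nu',\eta)$, and the same bookkeeping applies to both $M(w\cdot\lambda)$ and $M^\vee(w\cdot\lambda)$; hence $[M(\lambda,\eta)] = [M^\vee_w(\lambda,\eta)]$ in $\mathcal{K}\N(\xi(\lambda),\eta)$.

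For statement (2), the dual Verma module $M^\vee(w\cdot\lambda)$ has a unique irreducible submodule $L(w\cdot\lambda)$ (dualize the fact that $M(w\cdot\lambda)$ has unique irreducible quotient $L(w\cdot\lambda)$). I would choose $w$ to be the longest element of $W_\eta$, so that $w\cdot\lambda$ is $\Sigma_\eta^+$-antidominant; then by \cite[Prop. 6.9]{Backelin} we have $\overline{\Gamma}_\eta(L(w\cdot\lambda)) = L(\lambda,\eta) \neq 0$, whereas for the maximal submodule $N \subsetneq M^\vee(w\cdot\lambda)$ with simple socle, one checks $N$ does not contain $L(w\cdot\lambda)$ in its socle — more precisely, the socle of $M^\vee(w\cdot\lambda)$ is exactly $L(w\cdot\lambda)$, so any proper submodule either is zero or still has socle $L(w\cdot\lambda)$; combined with exactness of $\overline{\Gamma}_\eta$ and the fact that $L(\lambda,\eta)$ is the unique simple quotient (hence unique simple constituent "at the top" sense) of $M(\lambda,\eta)$, one concludes $L(\lambda,\eta) \hookrightarrow M^\vee_w(\lambda,\eta)$ and this copy is the whole socle. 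Since the definition of $M^\vee_w$ is independent of $w$ up to isomorphism (to be proven in Corollary \ref{cor:costandard-independence-of-w}, though for the socle statement one can also argue directly: the socle of $(M(w\cdot\lambda)^*)_\eta$ equals the socle of $M(w\cdot\lambda)^*$ intersected with the $\eta$-finite vectors, and $\mathrm{soc}(M(w\cdot\lambda)^*) = L(w\cdot\lambda)^* \cong L(w\cdot\lambda)$ is itself $\eta$-finite when... ), the conclusion holds for all $w \in W_\eta$.

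The main obstacle I anticipate is statement (2): while the existence of an irreducible submodule isomorphic to $L(\lambda,\eta)$ follows cleanly from $\overline{\Gamma}_\eta$ applied to the socle inclusion $L(w\cdot\lambda) \hookrightarrow M^\vee(w\cdot\lambda)$ in the antidominant case, proving \emph{uniqueness} of the irreducible submodule requires controlling the socle of $(M(w\cdot\lambda)^*)_\eta$ rather than of $M(w\cdot\lambda)^*$, and the functor $\overline{\Gamma}_\eta$ is only guaranteed exact, not fully faithful, so socles need not be preserved in general. I expect the resolution is to observe that any simple submodule of $M^\vee_w(\lambda,\eta) = (M(w\cdot\lambda)^*)_\eta \subseteq M(w\cdot\lambda)^*$ is in particular a simple submodule of $M(w\cdot\lambda)^*$, and $\mathrm{soc}(M(w\cdot\lambda)^*) = L(w\cdot\lambda)$ is one-dimensional-socle (dualizing $\mathrm{top}\,M(w\cdot\lambda) = L(w\cdot\lambda)$); then one only needs that this unique simple submodule $L(w\cdot\lambda)$ actually lies in the $\eta$-twisted $U(\mf{n})$-finite part, which holds because (again by Backelin, or by a direct weight argument) $L(w\cdot\lambda)$ when $w$ is the longest element of $W_\eta$ has $\overline{\Gamma}_\eta(L(w\cdot\lambda)) = L(\lambda,\eta) \neq 0$, forcing $L(w\cdot\lambda)^* \cap (M(w\cdot\lambda)^*)_\eta \neq 0$ and hence (by simplicity) $L(w\cdot\lambda)^* \subseteq (M(w\cdot\lambda)^*)_\eta$. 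This pins down the socle of the costandard module as the single copy of $L(\lambda,\eta)$, completing (2).
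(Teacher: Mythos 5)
Part (1) of your plan is essentially the paper's argument: exactness of $\overline{\Gamma}_\eta$ gives a map on Grothendieck groups, $\overline{\Gamma}_\eta(M(w\cdot\lambda))=M(\lambda,\eta)$, $\overline{\Gamma}_\eta(M^\vee(w\cdot\lambda))=M^\vee_w(\lambda,\eta)$, and $[M(w\cdot\lambda)]=[M^\vee(w\cdot\lambda)]$ in $\mathcal{K}\mc{O}$; the extra bookkeeping with images of simples is unnecessary but harmless.

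Part (2) has a genuine gap. Your uniqueness argument rests on the claim that $\mathrm{soc}\,(M(w\cdot\lambda)^\ast)=L(w\cdot\lambda)$ for the \emph{full} linear dual. That statement is true for the category-$\mc{O}$ dual $M^\vee(w\cdot\lambda)$ (the $\h$-finite dual), obtained by dualizing $\mathrm{top}\,M(w\cdot\lambda)=L(w\cdot\lambda)$, but $M^\vee_w(\lambda,\eta)=(M(w\cdot\lambda)^\ast)_\eta$ lives inside the full dual, whose socle is very far from simple: by Theorems \ref{thm:unique-forms} and \ref{thm:shapovalov} of this paper, any contravariant pairing yields a nonzero $\g$-map into $M(w\cdot\lambda)^\ast$, so for instance for a nondegenerate character $\eta'$ the irreducible Whittaker module $Y(w\cdot\lambda,\eta')$ embeds into $M(w\cdot\lambda)^\ast$ and is not isomorphic to $L(w\cdot\lambda)$. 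Hence ``a simple submodule of the $\eta$-finite part is a simple submodule of the full dual'' does not bound the socle, and your fallback remark about intersecting $\mathrm{soc}(M(w\cdot\lambda)^\ast)$ with the $\eta$-finite vectors trails off without resolving this. The ingredient that actually does the work in the paper is a count of Whittaker vectors: because $M(w\cdot\lambda)$ is free of rank one over $U(\bar{\n})$, the space $H^0_\eta(\n,M(w\cdot\lambda)^\ast)\cong H_0(\bar{\n},M(w\cdot\lambda))^\ast$ is one-dimensional, and every simple object of $\N$ contains a nonzero $\eta$-Whittaker vector, so any two simple submodules of $M^\vee_w(\lambda,\eta)$ meet nontrivially and there is at most one. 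Moreover, existence in the paper does not come from Backelin applied to $L(w\cdot\lambda)$ (which only applies when $w$ is the longest element of $W_\eta$, and your appeal to Corollary \ref{cor:costandard-independence-of-w} to remove that restriction is circular, since that corollary is deduced from this theorem), but from the morphism $\varphi_w$ of Lemma \ref{lemma:costandard-objects}: its kernel is $\mathrm{Rad}^L\langle\cdot,\cdot\rangle_w$, which by Proposition \ref{prop:radicals} is the maximal proper submodule of $M(\lambda,\eta)$, so $\im\varphi_w\cong L(\lambda,\eta)$. This route works for every $w\in W_\eta$ and without the regularity or integrality hypotheses that the Backelin-based argument for simples would require but which the theorem does not assume.
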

\begin{proof} 
In the proof of Lemma \ref{lemma:costandard-objects} it was shown that 
\begin{align*}
\overline{\Gamma}_\eta(M^\vee(w\cdot\lambda))&
=M^\vee_w(\lambda,\eta). 
\end{align*}
Because $\overline{\Gamma}_\eta$ is exact, we have a homomorphism of Grothendieck groups: $\overline{\Gamma}_\eta:\mathcal{K}\cO\rightarrow\mathcal{K}\N$. Because $M(\lambda,\eta)=\overline{\Gamma}_\eta(M(w\cdot\lambda))$ \cite[Prop. 6.9]{Backelin} and $[M(w\cdot\lambda)] = [M^\vee(w\cdot\lambda)]$ in $\mathcal{K}\cO$, we conclude that 
\begin{align*}
[M(\lambda,\eta)]
& = \overline{\Gamma}_\eta ([M(w\cdot\lambda)]) \\
&= \overline{\Gamma}_\eta([M^\vee(w\cdot\lambda)])\\
&= [M^\vee_w(\lambda,\eta)]. 
\end{align*}
This proves (1). 

Again, by Lemma \ref{lemma:costandard-objects}, each contravariant pairing $\langle\cdot,\cdot\rangle_w$ defines a $\g$-morphism 
\[
\varphi_w:M(\lambda,\eta)\rightarrow M_w^\vee(\lambda,\eta), \hspace{2mm} v\mapsto \langle v ,\cdot\rangle_w.
\]
The kernel of this morphism is the left radical of the form, $\text{Rad}^L\langle \cdot, \cdot \rangle_w$. By Proposition \ref{prop:radicals}, $\text{Rad}^L\langle \cdot, \cdot \rangle_w$ is the unique maximal proper submodule of $M(\lambda,\eta)$. Hence the image of $\varphi_w$ is isomorphic to $L(\lambda,\eta)$. 

We will now show that $\im\varphi_w$ is the unique irreducible submodule of $M^\vee_w(\lambda,\eta)$. Because $M(w\cdot\lambda)$ is a free rank 1 left $U(\bar{\n})$-module, we have that
\begin{align*}
H^0_\eta(\n,M_w^\vee(\lambda,\eta))& = H^0_\eta(\n, M(w\cdot\lambda)^\ast)\\
& \cong H_0(\bar{\n},M(w\cdot\lambda))^\ast\\
& \cong \C_\lambda,
\end{align*}
where the isomorphism are as $\z$-modules. 

Suppose that $X$ is an irreducible submodule of $M^\vee_w(\lambda,\eta)$. Then 
\[
H^0_\eta(\n,X)\subset H^0_\eta(\n,M^\vee_w(\lambda,\eta))\cong \C.
\]
Any irreducible object in $\mc{N}$ must contain a Whittaker vector (Proposition \ref{whittaker facts}(v)), so $H_\eta^0(\n, X) \neq 0$. Therefore, $H^0_\eta(\n,X)= H^0_\eta(\n,M^\vee_w(\lambda,\eta))$ and $X\cap \im\langle \cdot, \cdot \rangle_w\neq 0$. By irreducibility, $X = \im\langle \cdot, \cdot \rangle_w$. 
\end{proof}

The remaining results of this section, concluding with Theorem \ref{thm:universal-properties}, show that costandard Whittaker modules satisfy a universal property. 

\begin{lemma}\label{lemma:nonzero-homology-implies-gmorphism}
Assume $\lambda+\rho\in\h^\ast$ is regular. Let $V\in\N(\xi(\lambda),\eta)$ be a module such that 
\[H_0(\bar{\n},V)_{w\cdot\lambda}\neq 0,\]
for some $w\in W_\eta$. Then there exists a nonzero $\g$-morphism from $V$ to $M_w^\vee(\lambda,\eta)$.
\end{lemma}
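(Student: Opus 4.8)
The plan is to dualize the non-vanishing homology hypothesis into a non-vanishing Hom space, using exactly the adjunction chain that appeared in the proof of Theorem \ref{thm:unique-forms}. Concretely, I would first observe that a nonzero $\g$-morphism $V \to M_w^\vee(\lambda,\eta) = (M(w\cdot\lambda)^\ast)_\eta$ is the same data as a nonzero $\g$-morphism $V \to M(w\cdot\lambda)^\ast$, since by Lemma \ref{lem: twisted U(n)-finite vectors are g-stable} the image of any $\g$-morphism into $M(w\cdot\lambda)^\ast$ automatically lands in the $\eta$-twisted $U(\n)$-finite part (the source $V$ lies in $\N(\xi(\lambda),\eta)$, hence is locally $\eta$-twisted $\n$-finite, and this property is preserved by $\g$-morphisms). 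So it suffices to produce a nonzero element of $\Hom_\g(V, M(w\cdot\lambda)^\ast)$.

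Next I would run the adjunction computation: by tensor–hom adjunction and the $\g$-module structure on the linear dual (which swaps the roles of $\n$ and $\bar{\n}$, exactly as in the proof of Theorem \ref{thm:unique-forms}),
\[
\Hom_\g(V, M(w\cdot\lambda)^\ast) = \Hom_\g(V, \Hom_\C(M(w\cdot\lambda), \C)) = \Hom_\C(M(w\cdot\lambda) \otimes_{U(\g)} V, \C),
\]
and then $\Hom_\bb(\C_{w\cdot\lambda}, V^\ast) = H^0(\n, V^\ast)_{w\cdot\lambda} = (H_0(\bar{\n}, V)_{w\cdot\lambda})^\ast$, using that $M(w\cdot\lambda) = U(\g)\otimes_{U(\bb)} \C_{w\cdot\lambda}$ and Frobenius reciprocity. (Some care is needed here because $V$ is merely locally $Z(\g)$-finite, not finite-dimensional, so one should phrase the identification $\Hom_\g(V, M(\mu)^\ast) \cong (H_0(\bar{\n},V)_\mu)^\ast$ for a Verma module $M(\mu)$ as the statement that a $\g$-map $V \to M(\mu)^\ast$ is the same as a $\C$-linear functional on $V$ killing $\bar{\n}V$ and on which $\h$ acts by $\mu$ — this is a direct unwinding of the definitions and does not require finite-dimensionality of $V$.) Since $H_0(\bar{\n},V)_{w\cdot\lambda} \neq 0$ by hypothesis, its dual is nonzero, so $\Hom_\g(V, M(w\cdot\lambda)^\ast) \neq 0$, and combining with the first paragraph we get a nonzero $\g$-morphism $V \to M_w^\vee(\lambda,\eta)$.

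The main obstacle is bookkeeping rather than anything deep: one must be careful that the duality identification $\Hom_\g(V,M(\mu)^\ast) \cong (H_0(\bar{\n},V)_\mu)^\ast$ is valid for an arbitrary object $V$ of $\N(\xi(\lambda),\eta)$ and not just for finitely generated or finite-dimensional modules, and one must confirm that the $w\cdot\lambda$ weight space is the relevant one (this is where regularity of $\lambda+\rho$ is used, to ensure $w\cdot\lambda$ is an isolated $\h$-weight and that the weight-space decomposition of $H_0(\bar{\n},V)$ behaves well, cf. Lemma \ref{lemma:weight-space} and Theorem \ref{thm:homology}). Once those points are checked, the argument is a one-line adjunction together with the automatic $\eta$-finiteness of the image, and it specializes at $\eta = 0$ to the classical fact that nonzero $\bar{\n}$-homology in the right weight produces a map to a dual Verma module.
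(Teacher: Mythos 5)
Your overall route is the same as the paper's: dualize the nonvanishing of $H_0(\bar{\n},V)_{w\cdot\lambda}$ into a weight-$(w\cdot\lambda)$ vector of $H^0(\n,V^\ast)$, obtain a nonzero morphism $M(w\cdot\lambda)\to V^\ast$ by Frobenius reciprocity, transpose it to a $\g$-morphism $V\to M(w\cdot\lambda)^\ast$, and note that its image automatically lies in $(M(w\cdot\lambda)^\ast)_\eta=M_w^\vee(\lambda,\eta)$ because every vector of $V\in\N(\xi(\lambda),\eta)$ is $\eta$-twisted $U(\n)$-finite. The transposition and the final observation are exactly as in the paper's proof.

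The gap is in the step you declare to be ``a direct unwinding of the definitions.'' Unwinding definitions, a $\g$-map $V\to M(w\cdot\lambda)^\ast$ is a linear functional on $V$ killing $\bar{\n}V$ and $(h-(w\cdot\lambda)(h))V$ for all $h\in\h$; that is, $\Hom_\g(V,M(w\cdot\lambda)^\ast)$ is the dual of the $(w\cdot\lambda)$-\emph{coinvariants} of $H_0(\bar{\n},V)$, not the dual of its $(w\cdot\lambda)$-weight space. For an infinite-dimensional, non-$\h$-semisimple module these can differ: take $\h=\C h$ and the module with basis $m_0,m_1,\dots$, $hm_0=0$, $hm_i=m_{i-1}$; the $0$-weight space is nonzero, yet $hM=M$, so there is no nonzero weight-$0$ functional. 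Hence $H_0(\bar{\n},V)_{w\cdot\lambda}\neq 0$ does not formally produce a nonzero element of $(H_0(\bar{\n},V)^\ast)_{w\cdot\lambda}$, and your explicit claim that the identification ``does not require finite-dimensionality'' is where the argument as written fails. The paper closes precisely this hole in the first paragraph of its proof: since $V$ has finite length, $H_0(\bar{\n},-)$ is right exact, and Theorem \ref{thm:homology} makes the homology of standard (hence of simple) modules finite dimensional, $H_0(\bar{\n},V)$ is finite dimensional, and only then does a nonzero weight space dualize to a nonzero weight functional. Adding that finiteness argument (which is also where regularity and the hypothesis $V\in\N(\xi(\lambda),\eta)$ genuinely enter) repairs your proof and makes it coincide with the paper's. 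A smaller issue: your adjunction chain jumps from $\Hom_\C(M(w\cdot\lambda)\otimes_{U(\g)}V,\C)$ directly to $\Hom_\bb(\C_{w\cdot\lambda},V^\ast)$ without the intermediate identification with $\Hom_\g(M(w\cdot\lambda),V^\ast)$, but that is cosmetic.
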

\begin{proof}
We begin the proof by showing that $\dim H_0(\bar{\n},V)<\infty$. By Theorem \ref{thm:homology}, $H_0(\bar{\n},M(\mu,\eta))$ is finite-dimensional for each $\mu\in W_\eta\cdot\lambda$. Because each simple module $L(\mu,\eta)\in\N(\xi(\lambda),\eta)$ is the quotient of a standard module and $\C\otimes_{\bar{\n}}- = H_0(\bar{\n},-)$ is right exact,  $H_0(\bar{\n},L(\mu,\eta))$ is finite dimensional for each simple module $L(\mu,\eta)\in\N(\xi(\lambda),\eta)$. Because $V$ is finite length, $H_0(\bar{\n},V)$ is finite dimensional. 

Therefore, if $H_0(\bar{\n},V)_{w\cdot\lambda}\neq 0$, then $(H_0(\bar{\n},V)^\ast)_{w\cdot\lambda}\neq 0$. 
By tensor-hom adjuction, 
\[
H_0(\bar{\n},V)^\ast\cong \Hom_\C(\C\otimes_{\bar{\n}}V,\C) \cong \Hom_\n(\C,\Hom_\C(V,\C))\cong H^0(\n,V^\ast).
\]
We have shown that if $H_0(\bar{\n},V)_{w\cdot\lambda}\neq 0$, then $H^0(\n,V^\ast)_{w\cdot\lambda}\neq 0$. Therefore, there exists a nonzero $\g$-morphism 
\[
\varphi:M(w\cdot\lambda)\rightarrow V^\ast. 
\]
The morphism $\varphi$ determines a $\g$-morphism
\begin{align*}
    \hat{\varphi}:V&\rightarrow M(w\cdot\lambda)^\ast
\end{align*}
given by $\hat{\varphi}(v)(x):=\varphi(x)(v)$ for  $v\in V$ and $x\in M(w\cdot\lambda)$. We confirm that $\hat{\varphi}$ is a $\g$-morphism with the following simple calculation. For $u\in\g$, $v\in V$, and $x\in M(w\cdot\lambda)$, we have
\begin{align*}
    \hat{\varphi}(uv)(x) &= \varphi(x)(uv)\\
    &= (\tau(u)\varphi(x))(v)\\
    &= \varphi(\tau(u)x)(v)\\
    &= \hat{\varphi}(v)(\tau(u)x)\\
    &= (u\hat{\varphi}(v))(x).
\end{align*}
Here $\tau:U(\g) \rightarrow U(\g)$ is the transpose antiautomorphism (Section \ref{contravariant pairings of standard whittaker modules}). Because $\varphi$ is nonzero, there exists $x\in M(w\cdot\lambda)$ and $v\in V$ such that $\varphi(x)(v)\neq 0$. Therefore, $\hat{\varphi}(v)\neq 0$ and $\hat{\varphi}$ is nonzero. Because $V\in \N(\xi(\lambda),\eta)$, the image of $\hat{\varphi}$ must be contained in $M(w\cdot\lambda)^\ast_\eta =  M_w^\vee(\lambda,\eta)$.  
\end{proof}

The following proposition involves categories of $\lev_\eta$-modules as well as categories of $\g$-modules, and will require some additional notation. For $\lambda\in\h^\ast$ and $\eta\in\ch\n$, let $\N_\g \left(\widehat{\xi(\lambda)},\eta\right)$ be the category of $\g$-modules which are finitely generated, locally $U(\n)$-finite, locally $Z(\g)$-finite, annihilated by a power of $\xi(\lambda)$, and locally annihilated by a power of $\ker\eta$ (see Proposition \ref{prop:Ndefinition}). Recall that, for $\mu\in\h^\ast$, $\xi_\eta(\mu)$ is a maximal ideal in $Z(\lev_\eta)$ (see Section \ref{sec:standard-whittaker-modules}). Let $\N_{\lev_\eta}\left(\widehat{\xi_\eta(\mu)},\eta\right) $ be the category of $\lev_\eta$-modules which are finitely generated, locally $U(\n_\eta)$-finite, locally $Z(\lev_\eta)$-finite, annihilated by a power of $\xi_\eta(\mu)$, and locally annihilated by a power of $\ker\eta\vert_{U(\n_\eta)}$. For a module $X\in \N_\g \left(\widehat{\xi(\lambda)},\eta\right)$, let $(X)_{\mu_\z}$ denote the generalized $\z$-weight space corresponding to the weight $\mu\in\h^\ast\subset \z^\ast$ (where $\z$ is the center of $\lev_\eta$, see Proposition \ref{whittaker facts}). 

\begin{proposition}[{\cite[Prop. 2.2.2.]{Brown2019}}]\label{prop:exact-weight-space-functor}

The projection from a module $X\in \N_\g \left(\widehat{\xi(\lambda)},\eta\right)$ onto a generalized $\z$-weight space $(X)_{\mu_\z}$ defines an exact functor
\begin{align*}
    (\cdot)_{\mu_\z}:\N_\g \left(\widehat{\xi(\lambda)},\eta\right)\rightarrow\N_{\lev_\eta}\left(\widehat{\xi_\eta(\mu)},\eta\right).
\end{align*}
\end{proposition}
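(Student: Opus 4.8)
The plan is to verify the three requirements defining an object of $\N_{\lev_\eta}\left(\widehat{\xi_\eta(\mu)},\eta\right)$ for the module $(X)_{\mu_\z}$, and then to check exactness of the functor. First I would observe that since $\z$ is central in $\lev_\eta$ and acts semisimply (respectively locally finitely) on objects of $\N_\g$, the generalized $\z$-weight space $(X)_{\mu_\z}$ is a $\lev_\eta$-submodule of $X$ (it is $\z$-stable by definition, and stable under the remaining generators of $\lev_\eta$ since they commute with $\z$); the projection onto it along the other generalized weight spaces is a $\lev_\eta$-module map. The decomposition $X = \bigoplus_{\nu_\z} (X)_{\nu_\z}$ holds because $\z$ acts locally finitely with weights bounded above (as in Proposition \ref{whittaker facts}(iv)), so only finitely many weights appear on any finitely generated submodule; this also shows $(X)_{\mu_\z}$ is finitely generated over $U(\lev_\eta)$, since $X$ is finitely generated over $U(\g) = U(\bar\n^\eta)\otimes U(\lev_\eta) \otimes U(\n^\eta)$ and the $\z$-grading is compatible with this PBW factorization.

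Next I would check the finiteness conditions. Local $U(\n_\eta)$-finiteness of $(X)_{\mu_\z}$ is inherited from local $U(\n)$-finiteness of $X$, since $\n_\eta \subset \n$ and $(X)_{\mu_\z}$ is a subspace. For local $Z(\lev_\eta)$-finiteness: the Harish-Chandra homomorphism $p_\eta$ and the fact that $Z(\lev_\eta)$ acts on $X$ through its image in $U(\g)$, together with local $Z(\g)$-finiteness of $X$, give that $Z(\lev_\eta)$ acts locally finitely on $X$, hence on the submodule $(X)_{\mu_\z}$. For the "hat" conditions: $X$ is locally annihilated by a power of $\ker\eta$, so the same holds for the submodule $(X)_{\mu_\z}$ with $\ker\eta\vert_{U(\n_\eta)}$, using $\n_\eta\subset\n$. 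The remaining point is that $(X)_{\mu_\z}$ is annihilated by a power of $\xi_\eta(\mu)$: this is where one must combine the $\z$-weight information with the central character. Since $X$ is annihilated by a power of $\xi(\lambda) \subset Z(\g)$, and $\z$-weight $\mu$ is fixed on $(X)_{\mu_\z}$, one argues that the only possible generalized $Z(\lev_\eta)$-infinitesimal characters appearing on $(X)_{\mu_\z}$ must be compatible both with $\xi(\lambda)$ and with the fixed $\z$-weight $\mu_\z$, forcing the character to be $\chi^\mu_\eta$; here one uses the relationship between $p$, $p_\eta$, and the $\z$-grading. I expect this compatibility argument — pinning down that the central character of $Z(\lev_\eta)$ on $(X)_{\mu_\z}$ is exactly $\chi^\mu_\eta$ — to be the main obstacle, as it requires care with how the two Harish-Chandra homomorphisms interact.

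Finally, exactness of $(\cdot)_{\mu_\z}$ is immediate once the decomposition $X = \bigoplus_{\nu_\z}(X)_{\nu_\z}$ is established functorially: taking a generalized weight space is an exact operation because a short exact sequence $0 \to X' \to X \to X'' \to 0$ in $\N_\g$ decomposes compatibly with the $\z$-grading (any $\lev_\eta$-morphism preserves generalized $\z$-weight spaces, since it commutes with the central $\z$-action), so $0 \to (X')_{\mu_\z} \to (X)_{\mu_\z} \to (X'')_{\mu_\z} \to 0$ is exact. Since this proposition is cited from \cite[Prop. 2.2.2.]{Brown2019}, in the paper it suffices to invoke that reference, and the sketch above records the reason the statement holds.
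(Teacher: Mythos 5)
The paper itself contains no proof of this proposition: it is quoted verbatim from \cite[Prop.\ 2.2.2]{Brown2019}, so your closing remark that one may simply invoke the reference is exactly what the paper does, and there is no internal argument to compare against. The routine parts of your sketch are fine as far as they go: $(X)_{\mu_\z}$ is $\lev_\eta$-stable because $\lev_\eta$ commutes with $\z$, local $U(\n_\eta)$-finiteness and local nilpotence of $\ker\eta\vert_{U(\n_\eta)}$ are inherited from $X$, exactness is immediate once the generalized $\z$-weight decomposition is established functorially, and finite generation can indeed be pushed through using $U(\g)=U(\lev_\eta)U(\bar{\n}^\eta)U(\n^\eta)$ together with the finite-dimensionality of the $\z$-graded pieces of $U(\bar{\n}^\eta)$ --- though you only gesture at this last point, and it is the reason the argument works.

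The genuine gap is precisely the step you flag as ``the main obstacle,'' and the route you propose for it would fail. It is not true that compatibility with $\xi(\lambda)$ together with the fixed $\z$-weight $\mu_\z$ forces the generalized $Z(\lev_\eta)$-character on $(X)_{\mu_\z}$ to be $\chi_\eta^\mu$: distinct characters $\chi_\eta^\nu$ with $\nu\vert_\z=\mu\vert_\z$ genuinely coexist inside a single generalized $\z$-weight space of a module in $\N_\g\bigl(\widehat{\xi(\lambda)},\eta\bigr)$. Already a standard module exhibits this. Take $\g=\mathfrak{sl}_3$ and $\Pi_\eta=\{\alpha_1\}$; by Proposition \ref{whittaker facts}(iv), $M(\lambda,\eta)_{(\lambda-\alpha_2)_\z}\cong\bar{\n}^\eta\otimes_\C Y(\lambda,\eta)$ as $\lev_\eta$-modules, and tensoring the irreducible Whittaker module $Y(\lambda,\eta)$ with the two-dimensional $\lev_\eta$-module $\bar{\n}^\eta$ produces the generalized $Z(\lev_\eta)$-characters $\chi_\eta^{\lambda-\alpha_2}$ and $\chi_\eta^{\lambda-\alpha_1-\alpha_2}$ (Kostant's theorem on central characters of tensor products with finite-dimensional modules), which are distinct whenever $\lambda+\rho$ is regular, even though $(\lambda-\alpha_2)\vert_\z=(\lambda-\alpha_1-\alpha_2)\vert_\z$. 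So annihilation by a power of the single maximal ideal $\xi_\eta(\mu)$ cannot be extracted from your compatibility argument; one must either incorporate a further projection onto the generalized $\xi_\eta(\mu)$-eigenspace of $Z(\lev_\eta)$ (note that the present paper only ever uses the proposition for the extreme weight $\mu=\lambda$, where the weight space is $Y(\lambda,\eta)$ itself, and for vanishing statements) or follow the precise formulation and proof in \cite{Brown2019}. A related soft spot: local $Z(\lev_\eta)$-finiteness does not follow from local $Z(\g)$-finiteness ``through $p_\eta$,'' since $Z(\lev_\eta)$ is not contained in the image of $Z(\g)$ in $U(\g)$; the standard argument instead uses finite length in $\N$ together with the description of the $\z$-weight spaces of standard modules as $U(\bar{\n}^\eta)_{\kappa}\otimes_\C Y(\nu,\eta)$ and the same tensor-product theorem quoted above.
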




\begin{lemma}\label{lemma:char-implies-nonzero-homology}
Assume $\lambda+\rho\in\h^\ast$ is regular. Let $V\in\N(\xi(\lambda),\eta)$. If $[V]=[M(\lambda,\eta)]$ in the Grothendieck group $\mathcal{K}\N$, then $H_0(\bar{\n},V)_{w\cdot\lambda}\neq 0$ for each $w\in W_\eta$. 
\end{lemma}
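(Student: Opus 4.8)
We want to show that if $V \in \N(\xi(\lambda),\eta)$ has the same class as $M(\lambda,\eta)$ in the Grothendieck group, then $H_0(\bar{\n},V)_{w\cdot\lambda} \neq 0$ for each $w \in W_\eta$. Since $H_0(\bar{\n},-) = \C \otimes_{\bar{\n}} -$ is right exact but not exact, we cannot simply read $\dim H_0(\bar{\n},V)_{w\cdot\lambda}$ off the composition series; instead the idea is to isolate the relevant $\z$-weight space and then reduce to a statement that \emph{is} exact on the nose. The key structural input is Proposition~\ref{whittaker facts}(iv): the $\z$-weight space $M(\lambda,\eta)_{\lambda_\z}$ is isomorphic, as a $\lev_\eta$-module, to the irreducible $Y(\lambda,\eta)$, and $\lambda_\z = (w\cdot\lambda)_\z$ for all $w \in W_\eta$ since $w\cdot\lambda = \lambda$ as a character of $\z$. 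Meanwhile, for any $w \in W_\eta$, $H_0(\bar{\n},M(\lambda,\eta))_{w\cdot\lambda}$ is one-dimensional by Theorem~\ref{thm:homology}; the contribution to this one-dimensional space comes entirely from the $\z$-weight space $M(\lambda,\eta)_{\lambda_\z}$, because $H_0(\bar{\n},-) = H_0(\bar{\n}^\eta,-)$ composed with $H_0(\bar{\n}_\eta,-)$ and the outer functor strictly lowers $\z$-weights off the top (cf. the algebraic proof of Theorem~\ref{thm:homology}).

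\textbf{Key steps.} First, I would apply the exact weight-space functor $(\cdot)_{\lambda_\z} \colon \N_\g(\widehat{\xi(\lambda)},\eta) \to \N_{\lev_\eta}(\widehat{\xi_\eta(\lambda)},\eta)$ of Proposition~\ref{prop:exact-weight-space-functor} to the hypothesis $[V] = [M(\lambda,\eta)]$. Exactness of this functor gives $[(V)_{\lambda_\z}] = [M(\lambda,\eta)_{\lambda_\z}] = [Y(\lambda,\eta)]$ in $\mathcal{K}\N_{\lev_\eta}(\widehat{\xi_\eta(\lambda)},\eta)$. Since $Y(\lambda,\eta)$ is irreducible, this forces $(V)_{\lambda_\z}$ to be nonzero and to have $Y(\lambda,\eta)$ among its composition factors. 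Second, I would argue that $H_0(\bar{\n}_\eta, Y(\lambda,\eta)) \neq 0$ in $\z$-weight $w\cdot\lambda$: this is exactly the nondegenerate case already established inside the proof of Theorem~\ref{thm:homology}, where $H_0(\bar{\n}_\eta, Y(\lambda,\eta)) = \bigoplus_{y \in W_\eta} \C_{y\cdot\lambda}$, so the $(w\cdot\lambda)$-component is one-dimensional. Third — the crux — I need to lift nonvanishing of $\bar{\n}_\eta$-homology of the \emph{subquotient} $Y(\lambda,\eta)$ of $(V)_{\lambda_\z}$ to nonvanishing of $H_0(\bar{\n},V)_{w\cdot\lambda}$. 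Here I would use that $H_0(\bar{\n},V) = H_0(\bar{\n}_\eta, H_0(\bar{\n}^\eta, V))$ and that passing to the top $\z$-weight commutes with $H_0(\bar{\n}^\eta,-)$ in the sense that $H_0(\bar{\n}^\eta,V)_{\lambda_\z} = (V)_{\lambda_\z}$ (because $\bar{\n}^\eta$ strictly lowers $\z$-weight, so nothing above $\lambda_\z$ survives and the top weight space is untouched by the quotient). Thus $H_0(\bar{\n},V)_{w\cdot\lambda} = H_0(\bar{\n}_\eta, (V)_{\lambda_\z})_{w\cdot\lambda}$ as $\lev_\eta$-modules (restricting the $\z$-weight to $w\cdot\lambda$ makes sense since $(w\cdot\lambda)_\z = \lambda_\z$). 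Finally, I would combine the composition-series information with the precise knowledge of which subquotients contribute: among all composition factors of $(V)_{\lambda_\z}$, only $Y(\lambda,\eta)$ can contribute to the $(w\cdot\lambda)$-weight of $H_0(\bar{\n}_\eta,-)$, because any other composition factor is $L(\nu,\eta)_{\nu_\z}$-type with $\nu_\z \neq \lambda_\z$ or is supported at lower $\h$-weights — and a dimension count using $[(V)_{\lambda_\z}] = [Y(\lambda,\eta)]$ plus right-exactness of $H_0(\bar{\n}_\eta,-)$ pins $H_0(\bar{\n}_\eta,(V)_{\lambda_\z})_{w\cdot\lambda}$ to be exactly one-dimensional, hence nonzero.

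\textbf{Main obstacle.} The delicate point is the third step: right-exactness of $H_0(\bar{\n}_\eta,-)$ only gives an \emph{upper} bound on dimensions of homology of subquotients relative to the total, so I must rule out cancellation — i.e. that the $(w\cdot\lambda)$-weight contribution of $Y(\lambda,\eta)$ could be ``killed'' by an extension in $(V)_{\lambda_\z}$ or fail to appear because $Y(\lambda,\eta)$ sits as a subquotient rather than a quotient. The clean way around this is to observe that $\lambda_\z$ is the \emph{maximal} $\z$-weight occurring in any module of $\N(\xi(\lambda),\eta)$ (by Proposition~\ref{whittaker facts}(iv) applied to the composition factors, all of which are quotients of standard modules $M(\mu,\eta)$ with $\mu \in W_\eta\cdot\lambda$, hence have top $\z$-weight $\lambda_\z$), so $(V)_{\lambda_\z}$ is the top $\z$-graded piece of $V$ and is therefore a \emph{quotient} of $V$ under $H_0(\bar{\n}^\eta,-)$; and inside the $\lev_\eta$-category, $(V)_{\lambda_\z}$ maps onto a nonzero module with $Y(\lambda,\eta)$ as a subquotient — but since every composition factor of $(V)_{\lambda_\z}$ is a nondegenerate irreducible $\lev_\eta$-Whittaker module and these all have one-dimensional $\bar{\n}_\eta$-homology in \emph{each} $\z$-weight of $W_\eta\cdot\lambda$ (again from the computation in Theorem~\ref{thm:homology} applied over $\lev_\eta$), right-exactness forces $\dim H_0(\bar{\n}_\eta,(V)_{\lambda_\z})_{w\cdot\lambda} \geq 1$ with no room for cancellation. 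This reduces everything to the already-proven nondegenerate case of Theorem~\ref{thm:homology}, now applied to the Levi $\lev_\eta$.
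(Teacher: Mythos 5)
Your proposal follows essentially the same route as the paper: apply the exact $\z$-weight-space functor of Proposition \ref{prop:exact-weight-space-functor} to the hypothesis $[V]=[M(\lambda,\eta)]$, identify the top piece $V_{\lambda_\z}$ with $Y_{\lev_\eta}(\lambda,\eta)$, show that the $\bar{\n}^\eta$-part of $\bar{\n}V$ does not meet the $\lambda_\z$-weight space so that $H_0(\bar{\n},V)$ at weight $w\cdot\lambda$ is controlled by $H_0(\bar{\n}_\eta,V_{\lambda_\z})$, and finish with Theorem \ref{thm:homology} applied over the Levi $\lev_\eta$ (where $\eta$ is nondegenerate). The paper phrases the middle step as injectivity of the induced map $H_0(\bar{\n}_\eta,V_{\lambda_\z})\rightarrow H_0(\bar{\n},V)$, while you phrase it as an equality of the relevant weight spaces via $H_0(\bar{\n},V)=H_0(\bar{\n}_\eta,H_0(\bar{\n}^\eta,V))$; both are fine and rest on the same vanishing $(\bar{\n}^\eta V)_{\lambda_\z}=0$.

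There is, however, one incorrectly justified step. To get $(\bar{\n}^\eta V)_{\lambda_\z}=0$ you appeal to the claim that $\lambda_\z$ is the maximal $\z$-weight occurring in \emph{any} module of $\N(\xi(\lambda),\eta)$, on the grounds that all composition factors are quotients of standard modules $M(\mu,\eta)$ with $\mu\in W_\eta\cdot\lambda$. That is false: the simple objects of the block are $L(w^C\cdot\lambda,\eta)$ for \emph{all} cosets $C\in W_\eta\backslash W$, with top $\z$-weight $(w^C\cdot\lambda)_\z$, which need not lie below $\lambda_\z$ (already for $\eta=0$ a block of $\cO$ contains $L(\mu)$ for every $\mu\in W\cdot\lambda$). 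What you actually need is only that $V$ itself has no $\z$-weights above $\lambda_\z$, and this follows from the hypothesis by the tool you already use: apply the exact functor $(\cdot)_{\mu_\z}$ to $[V]=[M(\lambda,\eta)]$ for $\mu>\lambda$ and invoke $M(\lambda,\eta)_{\mu_\z}=0$ (Proposition \ref{whittaker facts}(iv)); this is exactly how the paper argues, so the gap is easily repaired. Relatedly, your ``main obstacle'' about cancellation is vacuous: since $Y_{\lev_\eta}(\lambda,\eta)$ is irreducible, $[(V)_{\lambda_\z}]=[Y_{\lev_\eta}(\lambda,\eta)]$ already forces $(V)_{\lambda_\z}\cong Y_{\lev_\eta}(\lambda,\eta)$, so no subquotient or extension issues arise and the final dimension count collapses to a direct application of Theorem \ref{thm:homology} for $\lev_\eta$.
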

\begin{proof}
By Proposition~\ref{whittaker facts}, $M(\lambda,\eta)_{\mu_\z}=0$ for each $\mu>\lambda$, where $>$ denotes the partial order on $\z^\ast$ induced by $\Sigma^+-\Sigma^+_\eta$ (see \cite[\S1]{McDowell}). Hence by Proposition~\ref{prop:exact-weight-space-functor}, $V_{\mu_\z}=0$ for each $\mu>\lambda$. 
By \cite[Prop. 5]{McDowell},
\[
\bar{\n}^\eta V_{\mu_\z}\subset \bigoplus_{\nu<\mu} V_{\nu_\z}.
\]
Therefore, $(\bar{\n}^\eta V)_{\lambda_\z}=0$. (If not, then there must exist $\mu>\lambda$ such that $V_{\mu_\z}\neq 0$, a contradiction). This implies that $ (\bar{\n}V)_{\lambda_\z}\subset (\bar{\n}_\eta V)_{\lambda_\z}$ because projection onto a generalized weight space is linear and $\bar{\n}V= \bar{\n}^\eta V+\bar{\n}_\eta V$. Moreover, because $\z$ commutes with $\bar{\n}_\eta$, the action of $\bar{\n}_\eta$ preserves $\z$-weights, and $(\bar{\n}_\eta V)_{\lambda_\z} = \bar{\n}_\eta V_{\lambda_\z}$. This shows that if $v\in V_{\lambda_\z}$ and $v\not\in \bar{\n}_\eta V_{\lambda_\z}$ then $v\not\in\bar{\n} V$. In other words, the map $H_0(\bar{\n}_\eta, V_{\lambda_\z})\rightarrow H_0(\bar{\n},V)$ induced by inclusion is injective. 

By Proposition \ref{whittaker facts}, $M(\lambda,\eta)_{\lambda_\z}\cong Y_{\lev_\eta}(\lambda,\eta)$ as $\lev_\eta$-modules. Because $[V]=[M(\lambda,\eta)]$, Proposition \ref{prop:exact-weight-space-functor} then implies that $V_{\lambda_\z}\cong Y_{\lev_\eta}(\lambda,\eta)$ as well. An application of Theorem \ref{thm:homology} to the $\lev_\eta$-module $Y_{\lev_\eta}(\lambda,\eta)$ lets us conclude that $H_0(\bar{\n}_\eta, V_{\lambda_\z})_{w\cdot\lambda}\neq 0$ for each $w\in W_\eta$. Therefore, $H_0(\bar{\n}, V)_{w\cdot\lambda}\neq 0$ for each $w\in W_\eta$ by the injectivity of $H_0(\bar{\n}_\eta, V_{\lambda_\z})\rightarrow H_0(\bar{\n},V)$.

\end{proof}
\begin{theorem}\label{thm:universal-properties}
Assume $\lambda+\rho\in\h^\ast$ is regular. Suppose $V\in\N$ is a module such that 
\begin{enumerate}
    \item $[V]=[M(\lambda,\eta)]$ in $\mathcal{K}\N$, and
    \item $V$ contains a unique irreducible submodule which is isomorphic to $L(\lambda,\eta)$. 
\end{enumerate}
Then $V\cong M_w^\vee(\lambda,\eta)$ for each $w\in W_\eta$. 
\end{theorem}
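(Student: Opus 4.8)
The plan is to produce a single nonzero $\g$-morphism $\psi\colon V\to M_w^\vee(\lambda,\eta)$ and then promote it to an isomorphism by counting composition factors in the Grothendieck group. First I would observe that, since $[V]=[M(\lambda,\eta)]$ in $\mathcal{K}\N$ and $M(\lambda,\eta)\in\N(\xi(\lambda),\eta)$, the block decomposition of Proposition~\ref{prop:Ndefinition} forces every composition factor of $V$ to lie in $\N(\xi(\lambda),\eta)$; as $V$ has finite length (its class in $\mathcal{K}\N$ being defined), $V\in\N(\xi(\lambda),\eta)$ and Lemma~\ref{lemma:char-implies-nonzero-homology} applies to hypothesis~(1), giving $H_0(\bar{\n},V)_{w\cdot\lambda}\neq 0$ for every $w\in W_\eta$. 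Lemma~\ref{lemma:nonzero-homology-implies-gmorphism} then produces the desired nonzero $\g$-morphism $\psi\colon V\to M_w^\vee(\lambda,\eta)$.

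The next step is to show that $\psi$ is injective. The key input is that $L(\lambda,\eta)$ occurs in $M(\lambda,\eta)$ with multiplicity exactly $1$: by Proposition~\ref{whittaker facts}(iv) the top $\z$-weight space $M(\lambda,\eta)_{\lambda_\z}$ is the irreducible $\lev_\eta$-module $Y(\lambda,\eta)$, while every other composition factor $L(\mu,\eta)$ has top $\z$-weight $\mu_\z<\lambda_\z$ and hence vanishing $\lambda_\z$-weight space, so the multiplicity is detected by this single irreducible weight space. By hypothesis~(1) it follows that $[V:L(\lambda,\eta)]=1$. Now suppose $\ker\psi\neq 0$. Being a nonzero submodule of the finite-length module $V$, it contains an irreducible submodule, which by hypothesis~(2) is isomorphic to $L(\lambda,\eta)$, so $[\ker\psi:L(\lambda,\eta)]\geq 1$. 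On the other hand $\im\psi$ is a nonzero submodule of $M_w^\vee(\lambda,\eta)$, so by Theorem~\ref{thm:alg-def-of-costandard-satisfies-universal-properties}(2) it contains the unique irreducible submodule $L(\lambda,\eta)$ of $M_w^\vee(\lambda,\eta)$, giving $[\im\psi:L(\lambda,\eta)]\geq 1$. Additivity of multiplicities along $0\to\ker\psi\to V\to\im\psi\to 0$ then yields $[V:L(\lambda,\eta)]\geq 2$, a contradiction. Hence $\ker\psi=0$.

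Finally, injectivity gives $[\im\psi]=[V]=[M(\lambda,\eta)]=[M_w^\vee(\lambda,\eta)]$ in $\mathcal{K}\N$, the last equality by Theorem~\ref{thm:alg-def-of-costandard-satisfies-universal-properties}(1); so the cokernel of $\psi$ has zero class and, being finite length, is zero. Thus $\psi$ is an isomorphism $V\cong M_w^\vee(\lambda,\eta)$, and since $w\in W_\eta$ was arbitrary this holds for all $w\in W_\eta$ (in particular, taking $V=M_{w'}^\vee(\lambda,\eta)$, which satisfies (1) and (2) by Theorem~\ref{thm:alg-def-of-costandard-satisfies-universal-properties}, recovers Corollary~\ref{cor:costandard-independence-of-w}). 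The only step that demands genuine care is the multiplicity-one fact $[M(\lambda,\eta):L(\lambda,\eta)]=1$ together with the use of hypothesis~(2) to locate a copy of $L(\lambda,\eta)$ inside $\ker\psi$; the remainder is a formal assembly of the two preceding lemmas with exact-sequence bookkeeping in the Grothendieck group.
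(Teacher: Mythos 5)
Your proof is correct, and its overall skeleton is the paper's: produce a nonzero map $V\to M_w^\vee(\lambda,\eta)$ via Lemma~\ref{lemma:char-implies-nonzero-homology} and Lemma~\ref{lemma:nonzero-homology-implies-gmorphism}, prove injectivity, and then get surjectivity from $[V]=[M(\lambda,\eta)]=[M_w^\vee(\lambda,\eta)]$ in the Grothendieck group. Where you genuinely diverge is the injectivity step. The paper restricts $\phi_w$ to the generalized $\z$-weight space of weight $\lambda_\z$, uses Proposition~\ref{whittaker facts}(iv) and Proposition~\ref{prop:exact-weight-space-functor} to see that $V_{\lambda_\z}$ and $M_w^\vee(\lambda,\eta)_{\lambda_\z}$ are irreducible $\lev_\eta$-modules, invokes Dixmier's lemma to conclude $(\phi_w)_{\lambda_\z}$ is an isomorphism (it cannot be zero, since $\im\phi_w$ contains the simple socle, which meets the $\lambda_\z$-weight space), and hence $\ker\phi_w$ has zero $\lambda_\z$-weight space; any simple submodule of the kernel would then fail to be $L(\lambda,\eta)$, contradicting hypothesis~(2). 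You instead run a composition-factor count: $[M(\lambda,\eta):L(\lambda,\eta)]=1$ (justified by the same structural inputs --- the top $\z$-weight space is the irreducible $\lev_\eta$-module $Y(\lambda,\eta)$ and the weight-space functor of Proposition~\ref{prop:exact-weight-space-functor} is exact), so a nonzero kernel would contribute one copy of $L(\lambda,\eta)$ via hypothesis~(2) while the image contributes another via the simple socle of $M_w^\vee(\lambda,\eta)$ from Theorem~\ref{thm:alg-def-of-costandard-satisfies-universal-properties}(2), exceeding the multiplicity. Both arguments rest on the same facts; yours avoids Dixmier's lemma at the price of establishing the multiplicity-one statement, which is a mild but genuine extra lemma. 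Two small remarks: your opening reduction via Proposition~\ref{prop:Ndefinition} strictly places $V$ only in $\N\bigl(\widehat{\xi(\lambda)},\eta\bigr)$ (annihilation by a power of $\xi(\lambda)$), not literally in $\N(\xi(\lambda),\eta)$; this is harmless, since what the two lemmas actually use is finite length, the $\eta$-twisted local $\n$-finiteness, and the $\z$-weight structure, and the paper's own statement carries the same implicit assumption. Also, your claim that every composition factor other than $L(\lambda,\eta)$ has top $\z$-weight strictly below $\lambda_\z$ should be phrased through the exactness of $(\cdot)_{\lambda_\z}$ together with the irreducibility of $M(\lambda,\eta)_{\lambda_\z}$, exactly as you indicate, rather than taken as a bare fact.
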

\begin{proof}

By Lemma \ref{lemma:char-implies-nonzero-homology}, $[V]=[M(\lambda,\eta)]$ implies that $H_0(\bar{\n},V)_{w\cdot\lambda}\neq 0$ for each $w\in W_\eta$. Therefore, we can apply Lemma \ref{lemma:nonzero-homology-implies-gmorphism} to get a nonzero $\g$-morphism $\phi_w:V\rightarrow M_w^\vee(\lambda,\eta)$ for each $w\in W_\eta$.

Recall that, by Theorem \ref{thm:alg-def-of-costandard-satisfies-universal-properties}, $M^\vee_w(\lambda,\eta)$ contains a unique irreducible submodule which is isomorphic to $L(\lambda,\eta)$. Because $\phi_w$ is nonzero, $\im\phi_w$ must contain the unique irreducible submodule of $M_w^\vee(\lambda,\eta)$. 

Let $K=\ker\phi_w$. The assumption that $[V]=[M(\lambda,\eta)] $ implies (by Proposition \ref{whittaker facts} and Proposition \ref{prop:exact-weight-space-functor}) that $M_w^\vee(\lambda,\eta)_{\lambda_\z}$ and $V_{\lambda_\z}$ are irreducible $\lev_\eta$-modules. 
Let $(\phi_w)_{\lambda_\z}:V_{\lambda_\z}\rightarrow M_w^\vee(\lambda,\eta)_{\lambda_\z}$ be the restriction of $\phi_w$ to the generalized $\z$-weight space of weight $\lambda$. By irreducibility of the $\lev_\eta$-modules and Dixmier's lemma, either $(\phi_w)_{\lambda_\z}=0$ or $(\phi_w)_{\lambda_\z}$ is an isomorphism. Because the unique irreducible $\g$-submodule of $M_w^\vee(\lambda,\eta)$ contains the weight space $M_w^\vee(\lambda,\eta)_{\lambda_\z}$ and $\im\phi_w$ contains the unique irreducible $\g$-submodule, we conclude that $M^\vee_w(\lambda,\eta)_{\lambda_\z}\subset \im\phi_w$. Therefore, $(\phi_w)_{\lambda_\z}$ is an isomorphism and $K_{\lambda_\z} = 0$. 

Because $\phi_w$ is a $\g$-morphism, $K$ is a $\g$-submodule of $V$. Moreover, $K$ is finite length and must have an irreducible submodule $I\subset K$. Because $K_{\lambda_\z} = 0$, $I_{\lambda_\z} = 0$, hence $I\not\cong L(\lambda,\eta)$. Therefore $K=0$ by uniqueness of irreducible $\g$-submodules of $V$. 

Because $K=0$, the map $\phi$ is injective. The assumption that $ [V]=[M(\lambda,\eta)] $ implies that $\phi_w$ is surjective, hence an isomorphism. 

\end{proof}

Theorem \ref{thm:universal-properties} immediately implies that all $w$-costandard modules corresponding to a standard module $M(\lambda, \eta)$ are isomorphic.

\begin{corollary}\label{cor:costandard-independence-of-w}
Assume $\lambda+\rho \in \h^*$ is regular. Then 
\[
M^\vee_w(\lambda,\eta)\cong M^\vee_y(\lambda,\eta)
\]
for all $w,y\in W_\eta$. 
\end{corollary}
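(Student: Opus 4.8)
The plan is to obtain this as a direct consequence of Theorem~\ref{thm:universal-properties}, which characterizes costandard modules up to isomorphism by two intrinsic properties, combined with Theorem~\ref{thm:alg-def-of-costandard-satisfies-universal-properties}, which verifies that each $M^\vee_w(\lambda,\eta)$ possesses those properties.

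First I would fix an arbitrary element $y\in W_\eta$ and set $V:=M^\vee_y(\lambda,\eta)$. By Lemma~\ref{lemma:costandard-objects} this $V$ is an object of $\N(\xi(\lambda),\eta)\subseteq\N$, so it is an admissible input for Theorem~\ref{thm:universal-properties}. Next I would check that $V$ satisfies the two hypotheses of that theorem. Condition~(1), that $[V]=[M(\lambda,\eta)]$ in $\mathcal{K}\N$, is precisely part~(1) of Theorem~\ref{thm:alg-def-of-costandard-satisfies-universal-properties} applied to $y$. Condition~(2), that $V$ contains a unique irreducible submodule and that this submodule is isomorphic to $L(\lambda,\eta)$, is precisely part~(2) of the same theorem.

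Having verified both hypotheses, I would invoke Theorem~\ref{thm:universal-properties} to conclude that $V\cong M^\vee_w(\lambda,\eta)$ for every $w\in W_\eta$; in other words $M^\vee_y(\lambda,\eta)\cong M^\vee_w(\lambda,\eta)$ for all $w\in W_\eta$. Since $y\in W_\eta$ was arbitrary, this yields $M^\vee_w(\lambda,\eta)\cong M^\vee_y(\lambda,\eta)$ for all $w,y\in W_\eta$, as claimed.

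There is essentially no obstacle in this argument: the substance lies entirely in Theorems~\ref{thm:alg-def-of-costandard-satisfies-universal-properties} and~\ref{thm:universal-properties}, which have already been established. The only point meriting a moment's attention is confirming that $M^\vee_y(\lambda,\eta)$ lies in $\N$, so that the universal property of Theorem~\ref{thm:universal-properties} genuinely applies to it; this is immediate from Lemma~\ref{lemma:costandard-objects}.
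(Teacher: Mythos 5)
Your argument is correct and is exactly the paper's intended proof: the corollary is stated there as an immediate consequence of Theorem~\ref{thm:universal-properties}, with Theorem~\ref{thm:alg-def-of-costandard-satisfies-universal-properties} supplying the two hypotheses for $V=M^\vee_y(\lambda,\eta)$. No issues.
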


\begin{remark} Now that we have determined that $M_w^\vee(\lambda,\eta)\cong M_y^\vee(\lambda,\eta)$ for any $w,y\in W_\eta$, we will omit the subscript and refer to the $\g$-module $M^\vee(\lambda,\eta)$ as a \emph{costandard module}.\end{remark}
\section{Whittaker modules form a highest weight category}
\label{whittaker modules form a highest weight category}

In this section we prove that the category $\mc{N}(\xi(\lambda), \eta)$ is a highest weight category. To do so we work in the geometric category $\mc{M}_{\text{coh}}(\mc{D}_\lambda, N, \eta)$. 

\begin{definition} Fix a field $k$ and let $\mc{A}$ be a $k$-linear category. We say that $\mc{A}$ is a {\em highest weight category}\footnote{This definition aligns with \cite[\S3.2]{BGS}, and differs slightly from the original definition of highest weight categories in \cite{CPS}.} if there exists a finite poset $\Lambda$ so that $\mc{A}$ and $\Lambda$ satisfy the following conditions:
\begin{enumerate}[label=(\arabic*)]
\item $\mc{A}$ is finite-length. 
\item The set of simple objects (up to equivalence) in $\mc{A}$ is finite and parameterized by $\Lambda$. Denote by $L_\lambda \in \mc{A}$ the simple object corresponding to $\lambda \in \Lambda$. 
\item For each $\lambda \in \Lambda$, there exists a standard object $M_\lambda$ and costandard object $I_\lambda$ in $\mc{A}$ and morphisms $M_\lambda \rightarrow L_\lambda$ and $L_\lambda \rightarrow I_\lambda$.
\item For any simple object $L_\lambda \in \mc{A}$, $\End (L_\lambda) = k$. 
\item If $T \subseteq \Lambda$ is closed (i.e. if $\mu \leq \lambda$ and $\lambda \in T$, then $\mu \in T$) and $\lambda \in T$ is maximal, $M_\lambda \rightarrow L_\lambda$ (resp. $L_\lambda \rightarrow I_\lambda$) is a projective cover\footnote{A projective cover of an object $M$ in a category $\mc{C}$ is a morphism $P \xrightarrow{f} M$ out of a projective object $P \in \mc{C}$ which is a superfluous epimorphism, meaning that every morphism $N \xrightarrow{g} P$ with the property that $f \circ g$ is an epimorphism is itself an epimorphism.} (resp. injective hull) in the Serre subcategory $\mc{A}_T \subset \mc{A}$ generated by the simple objects $L_\lambda$ for $\lambda \in T$. 
\item For $\lambda \in \Lambda$, the kernel of $M_\lambda \rightarrow L_\lambda$ is in $\mc{A}_{<\lambda}$, as is the cokernel of $L_\lambda \rightarrow I_\lambda$.
\item For all $\lambda, \mu \in \Lambda$, $\Ext^2_\mc{A}(M_\lambda, I_\mu) = 0$. 
\end{enumerate}
\end{definition}

\begin{theorem}\label{thm:geometric-highest-weight}
The category $\mc{M}_{\text{coh}}(\mc{D}_\lambda, N, \eta)$ is a highest weight category. 
\end{theorem}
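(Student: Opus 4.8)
The plan is to realize $\mathcal{M}_{\mathrm{coh}}(\mathcal{D}_\lambda,N,\eta)$ as a highest weight category for the poset $\Lambda:=W_\eta\backslash W$, partially ordered by $C'\leq C$ iff $w^{C'}\leq w^C$ in the Bruhat order (equivalently $\overline{C(w^{C'})}\subseteq\overline{C(w^C)}$), and to verify the seven conditions of the definition directly, following the treatment of $\mathcal{D}$-modules smooth along an affine stratification in \cite[\S3]{BGS}. Put $L_C:=\mathcal{L}(w^C,\lambda,\eta)$, $M_C:=\mathcal{M}(w^C,\lambda,\eta)$, $I_C:=\mathcal{I}(w^C,\lambda,\eta)$. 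Conditions (1)--(4) are immediate: $N$-equivariant coherent $\mathcal{D}_\lambda$-modules are holonomic (there are finitely many $N$-orbits on $X$), hence of finite length, giving (1); the classification of Section \ref{sec:standard-and-simple-HC-sheaves} gives the finite set of simples indexed by $\Lambda$, giving (2); the canonical maps $M_C\twoheadrightarrow L_C$ (unique irreducible quotient) and $L_C\hookrightarrow I_C$ (unique irreducible submodule) are those of Section \ref{sec:standard-and-simple-HC-sheaves}, giving (3); and $\End(L_C)=\C$ by Schur's lemma ($\C$ algebraically closed, $\Hom$-spaces finite-dimensional), giving (4). For (6): $M_C$ and $L_C$ have the same restriction $\mathcal{O}_{C(w^C),\eta}$ to the open cell $C(w^C)$, so $\ker(M_C\twoheadrightarrow L_C)$ is supported on $\overline{C(w^C)}\setminus C(w^C)$, forcing its composition factors to be among the $L_{C'}$ with $C'<C$; the cokernel of $L_C\hookrightarrow I_C$ is handled dually via holonomic duality.

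The remaining conditions rest on two geometric facts about the Bruhat stratification: (a) a cell $C(w)$ is either contained in $\overline{C(w')}$ or disjoint from it, according as $w\leq w'$ or not; and (b) since $C(w^C)\cong\mathbb{A}^{\ell(w^C)}$ is a single $N$-orbit, the $N$-equivariant connections on it compatible with the $\eta$-twist of Definition \ref{def:HCsheaf}(iii) form a semisimple category with unique simple object $\mathcal{O}_{C(w^C),\eta}$. For condition (5), fix a closed $T\subseteq\Lambda$ and $C\in T$ maximal. By (6), the composition factors of $M_C$ lie in $\{L_{C'}:C'\leq C\}\subseteq\mathcal{A}_T$, so $M_C\in\mathcal{A}_T$, and likewise $I_C\in\mathcal{A}_T$. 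Using the adjunction $(i_{w^C!},i_{w^C}^\bullet)$,
\[
\Hom_{\mathcal{A}_T}(M_C,\mathcal{F})=\Hom_{D_{w^C}}(\mathcal{O}_{C(w^C),\eta},\,i_{w^C}^\bullet\mathcal{F})\qquad(\mathcal{F}\in\mathcal{A}_T).
\]
Because $C$ is maximal, any composition factor $L_{C'}$ of $\mathcal{F}$ with $C'\neq C$ has $w^C\not\leq w^{C'}$, hence (by (a)) support disjoint from $C(w^C)$, so it restricts to $0$ there; by d\'evissage $i_{w^C}^\bullet\mathcal{F}$ is concentrated in degree $0$ and is an equivariant $\eta$-compatible connection on the cell, hence by (b) a finite direct sum of copies of $\mathcal{O}_{C(w^C),\eta}$ depending exactly and functorially on $\mathcal{F}$. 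Therefore $\Hom_{\mathcal{A}_T}(M_C,-)$ is exact, $M_C$ is projective, and $M_C\twoheadrightarrow L_C$ is a projective cover since $L_C$ is the unique simple quotient of $M_C$ (so its kernel is the radical, hence superfluous). Dually, $(i_{w^C}^\bullet,i_{w^C+})$ shows $I_C$ is injective in $\mathcal{A}_T$ and $L_C\hookrightarrow I_C$ is an injective hull.

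For condition (7) I would establish the stronger statement $\Ext^k_{\mathcal{A}}(M_C,I_{C'})=\C$ for $k=0$, $C=C'$, and $=0$ otherwise. By the two adjunctions, $\Ext^k_{\mathcal{A}}(M_C,I_{C'})=\Hom_{D_{w^C}}(\mathcal{O}_{C(w^C),\eta},\,i_{w^C}^\bullet i_{w^{C'}+}\mathcal{O}_{C(w^{C'}),\eta}[k])$. If $w^C\not\leq w^{C'}$ the closures of the two cells are disjoint and the inner object vanishes; if $C=C'$ it equals $\mathcal{O}_{C(w^C),\eta}$, giving $\C$ in degree $0$; and if $C<C'$, restricting along the closed immersion $\overline{C(w^{C'})}\hookrightarrow X$ reduces the computation to $\overline{C(w^{C'})}$, where $C(w^{C'})$ is open with $C(w^C)$ in its complement, and a $+$-pushforward from the open stratum has no derived sections supported on the complement, so the inner object is $0$. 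In particular $\Ext^2_{\mathcal{A}}(M_C,I_{C'})=0$, which is condition (7), and the theorem follows.

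I expect the main obstacle to be condition (5), and within it the precise form of fact (b): that an object of $\mathcal{A}_T$ with $C$ maximal, restricted via $i_{w^C}^\bullet$, is a genuine direct sum of standard connections on the cell (this is exactly what forces $M_C$ to be projective), together with the careful bookkeeping of the cohomological shifts in the $\mathcal{D}$-module functors $i_{w^C!}$, $i_{w^C+}$, $i_{w^C}^\bullet$ for the affinely embedded Bruhat strata. These inputs are available from \cite{TwistedSheaves} and \cite{Romanov} and the standard $\mathcal{D}$-module formalism, but assembling them with the correct normalizations is the delicate part; conditions (1)--(4), (6), and (7) are then essentially formal consequences of the recollement structure.
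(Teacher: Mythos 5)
Your proposal is correct and follows essentially the same route as the paper: the poset $W_\eta\backslash W$ with Bruhat order on longest coset representatives, the same standard, costandard and simple sheaves for conditions (1)--(4) and (6), projectivity of $\mc{M}_C$ (and dually injectivity of $\mc{I}_C$) in $\mc{A}_T$ via the pushforward/restriction adjunction together with semisimplicity of the category of $\eta$-twisted connections on the open cell, and condition (7) by the same adjunction computation reducing to $i_{w^C}$-restriction of $i_{w^{C'}+}\mc{O}_{C(w^{C'}),\eta}$. The only (cosmetic) differences are that the paper gets the vanishing for $C\neq C'$ from base change over the empty fibre product where you argue by support, and deduces the projective-cover property from $\End(\mc{M}_C)=\C$ plus Krull--Schmidt where you use that the kernel is the unique maximal submodule; also, in your case analysis for (7) the phrase ``the closures of the two cells are disjoint'' should read ``the cell $C(w^C)$ is disjoint from $\overline{C(w^{C'})}$,'' which is exactly your fact (a) and is what the argument actually uses.
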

\begin{proof}
Denote the category $\mc{M}_{\text{coh}}(\mc{D}_\lambda, N, \eta)$ by $\mc{A}$. The Bruhat order on longest coset representatives defines a partial order on the finite set $W_\eta \backslash W$. We will show that the pair $(\mc{A}, W_\eta \backslash W)$ satisfies the seven conditions of a highest weight category. 

The category $\mc{A}$ is a finite-length abelian category, so condition (1) is satisfied. Let $\mc{M}_C:=\mc{M}(w^C, \lambda, \eta)$,  $\mc{I}_C:=\mc{I}(w^C, \lambda, \eta)$ and $\mc{L}_C:=\mc{L}(w^C, \lambda, \eta)$ be the standard, costandard, and simple $\eta$-twisted Harish-Chandra sheaves in $\mc{A}$ (see Section \ref{sec:standard-and-simple-HC-sheaves}). These are parametrized by the poset $W_\eta \backslash W$. The $\mc{D}_\lambda$-module $\mc{L}_C$ appears as the unique irreducible quotient of $\mc{M}_C$ and the unique irreducible subsheaf of $\mc{I}_C$ \cite[\S3]{TwistedSheaves} \cite[Prop. 3]{Romanov}, so there are projection and inclusion maps 
\[
\mc{M}_C \twoheadrightarrow \mc{L}_C  \text{ and } \mc{L}_C \hookrightarrow I_C.
\]
Moreover, all simple objects in $\mc{A}$ are of the form $\mc{L}_C$ for some $C \in W_\eta \backslash W$ \cite[\S3]{TwistedSheaves}. Hence conditions (2) and (3) are satisfied. 

By Schur's lemma, $\End(\mc{L}_C)$ is a division algebra over $\C$. Restriction to $C(w^C)$ gives a nonzero algebra homomorphism 
\[
\varphi: \End(\mc{L}_C) \rightarrow \End(\mc{O}_{C(w^C), \eta})=\C.
\]
Since $\End(\mc{L}_C)$ is a division algebra, $\varphi$ must be an isomorphism. (Indeed, the kernel of $\varphi$ is an ideal in $\End(\mc{L}_C)$, so it must be trivial, and $\varphi$ is nonzero, so it must be surjective.) This establishes (4). 

Next we argue (5). For a fixed coset $C\in W_\eta\backslash W$, let 
\[
T= \{ D \in W_\eta \backslash W \mid D \leq C \}.
\]
Because the Bruhat order on longest coset representatives agrees with the closure order on Bruhat cells \cite[Ch.6 \S1]{Localization}, $\mc{A}_T$ is the category of $\eta$-twisted Harish-Chandra sheaves supported on $\overline{C(w^C)}$. To establish the projectivity of $\mc{M}_C$, we will show that the functor 
\[
\Hom_{\mc{A}_T}(\mc{M}_C, -): \mc{A}_T \rightarrow \Vect
\]
is exact. Let $j_{w^C}:C(w^C) \hookrightarrow \overline{C(w^C)}$ be inclusion. This is an open immersion, so the functor $j_{w^C}^!=j_{w^C}^+$ is the restriction functor (see \cite[App. A.2]{Romanov} for conventions on $\mc{D}$-module functors). In particular, $j_{w^C}^!$ is exact. Denote by $\mc{A}_C$ the category of $\eta$-twisted Harish-Chandra sheaves on $C(w^C)$. For any $\mc{V} \in \mc{A}_T$, 
\[
\Hom_{\mc{A}_T}(j_{w^C!}\mc{O}_{C(w^C),\eta}, \mc{V}) = \Hom_{\mc{A}_C}(\mc{O}_{C(w^C), \eta}, j_{w^C}^! \mc{V}).
\]
The category $\mc{A}_C$ is semisimple, so there are no higher extensions. Hence the composition $\Hom(\mc{O}_{C(w^C)}, -) \circ j_{w^C}^!$ is exact. This establishes that $\mc{M}_C$ is projective. 

The category $\mc{A}$ is finite length and abelian, so it is Krull-Schmidt\footnote{Recall that a category is called {\em Krull-Schmidt} if every object decomposes into a finite direct sum of indecomposable objects, which are characterized by the fact that their endomorphism rings are local.}. In a Krull-Schmidt category, any indecomposable projective object which surjects onto a given object is a projective cover, so to show that $\mc{M}_C \twoheadrightarrow \mc{L}_C$ is a projective cover, it suffices to show that $\mc{M}_C$ is indecomposable. We will do so by showing that its endomorphism ring is local. Indeed, we compute:
\begin{align*}
    \End(\mc{M}_C) &= \Hom(j_{w^C!} \mc{O}_{C(w^C), \eta}, j_{w^C!} \mc{O}_{C(w^C), \eta})\\
    &= \Hom(\mc{O}_{C(w^C), \eta}, j_{w^C}!j_{w^C!} \mc{O}_{C(w^C), \eta}) \\
    &= \Hom(\mc{O}_{C(w^C), \eta}, \mc{O}_{C(w^C), \eta}) \\ 
    &= \C. 
\end{align*}
This shows that $\mc{M}_C \twoheadrightarrow \mc{L}_C$ is a projective cover in $\mc{A}_T$. By applying holonomic duality, we obtain that $\mc{L}_C\hookrightarrow \mc{I}_C$ is an injective hull in $\mc{A}_T$, establishing (5).

To etablish (6), note that $\mc{M}_C$, $\mc{I}_C$, and $\mc{L}_C$ all restrict to the same object on the biggest cell in their support, and the natural maps between them restrict to isomorphisms on this cell. Hence the support of the kernel and cokernel is strictly smaller.


It remains to show (7). 
Let $C, D \in W_\eta \backslash W$ be cosets. We have
\begin{align*}
    \Ext^2_{\mc{M}}(\mc{M}_C, \mc{I}_D) &= \Hom_{D^b(\mc{A})}(i_{w^C!}\mc{O}_{C(w^C), \eta}, i_{w^D+}\mc{O}_{C(w^D),\eta}[2]) \\
    &= \Hom_{D^b(\mc{A}_C)}(\mc{O}_{C(w^C), \eta}, i_{w^C}^!i_{w^D+}\mc{O}_{C(w^D), \eta}[2]).
\end{align*}
By smooth base change \cite[Thm. 10.2]{D-modulesnotes} applied to the fibre product diagram 
\[
\begin{tikzcd}
C(w^C) \times_X C(w^D) \arrow[r] \arrow[d] & C(w^C) \arrow[d, hookrightarrow, "i_{w^C}"] \\
C(w^D) \arrow[r, hookrightarrow, "i_{w^D}"] & X 
\end{tikzcd}
\]
we have that $i_{w^C}^! i_{D+} \mc{O}_{C(w^C), \eta} = 0$ if $C \neq D$. 

If $C=D$, then $i^!_{w^C}i_{w^C+}\mc{O}_{C(w^C), \eta} \cong \mc{O}_{C(w^C), \eta}$ (viewed as a complex concentrated in degree $0$ in $D^b(\mc{A}_C)$). Since the category $\mc{A}_C$ is semisimple, there are no higher extensions, hence 
\[
\Hom_{D^b(\mc{A}_C)}(\mc{O}_{C(w^C), \eta}, i_{w^C}^!i_{w^C+}\mc{O}_{C(w^C), \eta}[2]) = \Ext^2_{\mc{A}_C}(\mc{O}_{C(w^C), \eta}, \mc{O}_{C(w^C), \eta}) = 0.
\]
\end{proof}

We wish to use Theorem \ref{thm:geometric-highest-weight} to give $\mc{N}(\xi(\lambda), \eta)$ the structure of a highest weight category. It was established in \cite[ Thm. 9, Thm. 10]{Romanov} that the global sections of standard (resp. simple) $\eta$-twisted Harish-Chandra sheaves are standard (resp. simple) Whittaker modules (see Proposition \ref{prop:global-sections-of-standard}). Moreover, using the universal property of costandard modules established in Section \ref{costandard whittaker modules} (Theorem \ref{thm:universal-properties}), we can show that the global sections of costandard $\eta$-twisted Harish-Chandra sheaves are the costandard modules defined in Section \ref{costandard whittaker modules}. We do so in the following lemma.

\begin{lemma}\label{lemma:algebraic-geometric-agreement}
Let $\lambda+\rho\in\h^\ast$ be regular and antidominant. For each $C \in W_\eta \backslash W$, 
\[
\Gamma(X,\mc{I}(w^C,\lambda+\rho,\eta)) \cong M^\vee(w^C\cdot\lambda,\eta). 
\]
\end{lemma}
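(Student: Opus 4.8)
The plan is to identify $V:=\Gamma(X,\mc{I}(w^C,\lambda+\rho,\eta))$ by means of the universal characterization of costandard modules, Theorem~\ref{thm:universal-properties}. Since $\lambda+\rho$ is regular and antidominant, $\Gamma(X,-)$ is the inverse of the localization equivalence~(\ref{BB equivalence}); in particular it is an exact equivalence $\mc{M}_{\text{coh}}(\mc{D}_{\lambda+\rho},N,\eta)\xrightarrow{\sim}\mc{N}(\xi(\lambda),\eta)$, so $V\in\mc{N}(\xi(\lambda),\eta)\subset\N$. Moreover $w^C\cdot\lambda+\rho=w^C(\lambda+\rho)$ is again regular, so the costandard module $M^\vee(w^C\cdot\lambda,\eta)$ is defined (Definition~\ref{def:costandard}, Corollary~\ref{cor:costandard-independence-of-w}). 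It therefore suffices to check that $V$ satisfies the two hypotheses of Theorem~\ref{thm:universal-properties} with its parameter taken to be $w^C\cdot\lambda$.

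The hypothesis on submodules is immediate. The costandard twisted Harish-Chandra sheaf $\mc{I}(w^C,\lambda+\rho,\eta)$ has a unique irreducible submodule, namely $\mc{L}(w^C,\lambda+\rho,\eta)$ (Section~\ref{sec:standard-and-simple-HC-sheaves}). As $\Gamma(X,-)$ is an equivalence it carries subobjects to subobjects and preserves simplicity, so $V$ has a unique irreducible submodule, and by Proposition~\ref{prop:global-sections-of-standard} this submodule is $\Gamma(X,\mc{L}(w^C,\lambda+\rho,\eta))=L(w^C\cdot\lambda,\eta)$.

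For the hypothesis on Grothendieck classes, exactness of $\Gamma(X,-)$ together with Proposition~\ref{prop:global-sections-of-standard} reduces the desired equality $[V]=[M(w^C\cdot\lambda,\eta)]$ in $\mathcal{K}\N$ to the geometric identity
\[
[\mc{M}(w^C,\lambda+\rho,\eta)]=[\mc{I}(w^C,\lambda+\rho,\eta)]
\]
in the Grothendieck group of $\mc{M}_{\text{coh}}(\mc{D}_{\lambda+\rho},N,\eta)$, and I expect this to be the main obstacle. It is the twisted counterpart of the classical equality $[M(w\cdot\mu)]=[M^\vee(w\cdot\mu)]$ in $\mathcal{K}\cO$: by Definition~\ref{def:standard-sheaf} the sheaves $i_{w^C!}\mc{O}_{C(w^C),\eta}$ and $i_{w^C+}\mc{O}_{C(w^C),\eta}$ are interchanged by holonomic duality, an exact contravariant functor carrying irreducible twisted Harish-Chandra sheaves to irreducible sheaves with the same support, so that the composition multiplicities of the standard and costandard sheaves agree. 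This equality is also implicit in the composition-series analysis of standard and costandard twisted Harish-Chandra sheaves in \cite{TwistedSheaves, Romanov}, and is consistent with the remark in the proof of Theorem~\ref{thm:geometric-highest-weight}(6) that $\mc{M}(w^C,\lambda+\rho,\eta)$, $\mc{I}(w^C,\lambda+\rho,\eta)$, and $\mc{L}(w^C,\lambda+\rho,\eta)$ all restrict to the same irreducible connection on the open stratum of their common support.

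With both hypotheses verified, Theorem~\ref{thm:universal-properties} yields $V\cong M^\vee(w^C\cdot\lambda,\eta)$, which is the assertion. Everything apart from the K-theoretic comparison of standard and costandard twisted Harish-Chandra sheaves is a formal consequence of the localization equivalence and the universal property established in Section~\ref{costandard whittaker modules}.
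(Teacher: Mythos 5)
Your overall strategy is the paper's: identify $\Gamma(X,\mc{I}(w^C,\lambda+\rho,\eta))$ by checking the two hypotheses of Theorem~\ref{thm:universal-properties}, and your treatment of the unique-irreducible-submodule condition (exactness of $\Gamma$ in the regular antidominant range together with Proposition~\ref{prop:global-sections-of-standard}) is essentially identical to the paper's. The problem is the step you yourself call the main obstacle, the identity $[\mc{M}(w^C,\lambda+\rho,\eta)]=[\mc{I}(w^C,\lambda+\rho,\eta)]$, and your duality argument for it has a genuine gap. Holonomic duality is not an endofunctor of $\mc{M}_{\text{coh}}(\mc{D}_{\lambda+\rho},N,\eta)$: it takes $\mc{D}_{\lambda+\rho}$-modules to modules over the opposite twist, and it replaces the compatibility character $\eta$ by its negative (already on a cell, the dual of the exponential module $D_{\mathbb{G}_a}/D_{\mathbb{G}_a}(\partial-1)$ is $D_{\mathbb{G}_a}/D_{\mathbb{G}_a}(\partial+1)$). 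So duality does interchange $!$- and $+$-standard objects and does carry irreducibles to irreducibles with the same support, but the multiplicity comparison it produces relates the category at parameter $(\lambda+\rho,\eta)$ to the category at the dual parameter, not the category to itself. To conclude that $[\mc{M}(w^C):\mc{L}(w^D)]=[\mc{I}(w^C):\mc{L}(w^D)]$ inside $\mc{M}_{\text{coh}}(\mc{D}_{\lambda+\rho},N,\eta)$ you would need the additional, non-formal fact that these multiplicities are unchanged under that change of parameters (for instance via the Whittaker Kazhdan--Lusztig algorithm of \cite{Romanov}). Note also that equality of the classes of standard and costandard objects is not a formal consequence of the highest weight structure in Theorem~\ref{thm:geometric-highest-weight} (it fails for general quasi-hereditary algebras), and the observation that $\mc{M}_C$, $\mc{I}_C$, $\mc{L}_C$ agree on the open stratum of their support only controls the multiplicity of $\mc{L}_C$ itself, not of the other composition factors.

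The paper fills exactly this step by a different mechanism: using intertwining functors, $\mc{I}(w^C,w_C^{-1}(\mu+\rho),\eta)\cong LI_{w_C^{-1}}(\mc{I}(w^\Theta,\mu+\rho,\eta))$ by \cite[Prop.~5]{Romanov}, and since intertwining functors do not change derived global sections in the appropriate range (\cite[Ch.~3, Thm.~3.23]{Localization}), the class of $\Gamma(X,\mc{I}(w^C,\lambda+\rho,\eta))$ is reduced to the class of $\Gamma(X,\mc{I}(w^\Theta,\mu+\rho,\eta))$, which was computed to be $[M(w^\Theta\cdot\mu,\eta)]$ in the proof of \cite[Prop.~5]{Romanov}. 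So to repair your argument you should either import that computation (as the paper does) or supply a genuine proof that the composition multiplicities of standard versus costandard $\eta$-twisted Harish-Chandra sheaves agree within a fixed twist, rather than appealing to holonomic duality as if it preserved the category.
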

\begin{proof}
By Theorem \ref{thm:universal-properties}, it is enough to show that:
\begin{enumerate}
    \item \label{proof:condition1} $[\Gamma(X,\mc{I}(w^C,\lambda+\rho,\eta))] = [M(w^C\cdot\lambda,\eta)]$, and
    \item\label{proof:condition2}  $\Gamma(X,\mc{I}(w^C,\lambda+\rho,\eta))$ contains a unique irreducible submodule which is isomorphic to $L(\lambda,\eta)$. 
\end{enumerate}
We begin with a proof of (\ref{proof:condition1}). Let $D^b(\mc{M}_{qc}(\mc{D}_\mu))$ denote the bounded derived category of quasi-coherent $\mc{D}_\mu$-modules on $X$, and $w \mu$ the regular action of $W$ on $\h^*$ (not the dot action). For $w \in W$ and $\mu \in \h^*$, let
\[
LI_w:D^b(\mc{M}_{qc}(\mc{D}_\mu)) \rightarrow D^b(\mc{M}_{qc}(\mc{D}_{w\mu}))
\]
be the corresponding intertwining functor (see \cite[Ch. 3 \S3]{Localization} for a definition of $LI_w$). Let $w_C \in W$ denote the unique shortest element in a coset $C \in W_\eta \backslash W$. (Recall that we denote the longest element of $C$ by $w^C$.) Denote the $W_\eta$-coset of the identity $1\in W$ by $\Theta$. Then the longest element in $\Theta = W_\eta$ is $w^\Theta$. For every coset $C \in W_\eta \backslash W$, we have $w^C=w^\Theta  w_C$ \cite[Ch. 6 Thm. 1.4(iv)]{Localization}. By \cite[Prop. 5]{Romanov}, for any $\mu \in \h^*$, 
\[
LI_{w^{-1}_C}(\mc{I}(w^\Theta , \mu + \rho, \eta)) = \mc{I}(w^C, w_{C}^{-1}(\mu + \rho), \eta). 
\]
Hence 
\[
R\Gamma\circ LI_{w^{-1}_C}(\mc{I}(w^\Theta , \mu + \rho, \eta)) = R \Gamma (\mc{I}(w^C, w_{C}^{-1}(\mu + \rho), \eta)).
\]
If we choose $\mu$ such that $w_C^{-1}(\mu + \rho)$ is antidominant, then by \cite[Ch. 3, Thm. 3.23]{Localization}, we have
\[
R\Gamma( \mc{I}(w^\Theta , \mu + \rho, \eta)) = R \Gamma (\mc{I}(w^C, w_C^{-1}(\mu + \rho), \eta)).
\]
It was shown in the proof of \cite[Prop. 5]{Romanov} that for any $\mu \in \h^*$, 
\[
[\Gamma(X, \mc{I}(w^\Theta , \mu + \rho, \eta))] = [M(w^\Theta  \cdot \mu, \eta)]
\]
in $K\mc{N}(\xi(\mu), \eta)$. Hence for $\lambda$ such that $\lambda + \rho$ is antidominant, 
\begin{align*}
    [\Gamma(X, \mc{I}(w^C, \lambda + \rho, \eta)]&=[\Gamma(X, \mc{I}(w^\Theta , w_C(\lambda + \rho), \eta)]\\
    &=[M(w^\Theta  w_C \cdot \mu, \eta)]\\
    &= [M(w^C \cdot \mu, \eta)],
\end{align*}
which completes the proof of (\ref{proof:condition1}). 

To prove (\ref{proof:condition2}), we recall that $\mc{I}(w^C, \lambda + \rho, \eta)$ contains a unique irreducible subsheaf, $\mc{L}(w^C, \lambda + \rho, \eta)$. By Proposition \ref{prop:global-sections-of-standard}, when $\lambda + \rho$ is regular and antidominant, $\Gamma(X, \mc{L}(w^C, \lambda + \rho, \eta))=L(w^C \cdot \lambda, \eta)$. Hence by the exactness of $\Gamma$ for antidominant $\lambda + \rho$, $\Gamma(X, \mc{I}(w^C, \lambda + \rho, \eta))$ contains a unique irreducible submodule isomorphic to $L(w^C \cdot \lambda, \eta)$.
\end{proof}
\begin{corollary}\label{cor:highest-weight}
Let $\lambda+\rho$ be regular and antidominant. The category $\mc{N}(\xi(\lambda), \eta)$ is a highest weight category with standard objects $M(w^C\cdot \lambda, \eta)$, costandard objects $M^\vee(w^C\cdot \lambda, \eta)$, and simple objects $L(w^C \cdot\lambda, \eta)$, where $C$ ranges over all cosets in $W_\eta \backslash W$.  
\end{corollary}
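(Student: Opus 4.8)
The plan is to transport the highest weight structure from the geometric side to the algebraic side along Beilinson--Bernstein localization. Since $\lambda+\rho$ is regular and antidominant, the global sections functor $\Gamma(X,-)$ is exact and is the inverse of the localization equivalence $\Delta_{\lambda+\rho}$ of (\ref{BB equivalence}), so it is an equivalence of abelian categories $\mc{M}_{\text{coh}}(\mc{D}_{\lambda+\rho},N,\eta)\xrightarrow{\sim}\mc{N}(\xi(\lambda),\eta)$. The property of being a highest weight category, in the sense of the definition preceding Theorem \ref{thm:geometric-highest-weight}, is manifestly invariant under equivalences of abelian categories: each of the clauses (1)--(7) is phrased purely in terms of finite length, the set of isomorphism classes of simple objects, morphisms among standard/simple/costandard objects, projective covers, injective hulls, Serre subcategories generated by sets of simples, kernels and cokernels, and $\Ext^2$-groups, all of which are preserved by an equivalence. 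So it suffices to identify, under $\Gamma(X,-)$, the images of the data witnessing Theorem \ref{thm:geometric-highest-weight}.

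First I would fix the poset. On the geometric side it is $W_\eta\backslash W$ equipped with the Bruhat order on longest coset representatives. Because $\lambda+\rho$ is regular, Proposition \ref{whittaker facts}(ii) shows that the assignment $C\mapsto w^C\cdot\lambda$ is injective, so we may transport the order along this bijection and index the algebraic side by the same finite poset. Next, by Proposition \ref{prop:global-sections-of-standard} we have $\Gamma(X,\mc{M}(w^C,\lambda+\rho,\eta))\cong M(w^C\cdot\lambda,\eta)$ and $\Gamma(X,\mc{L}(w^C,\lambda+\rho,\eta))\cong L(w^C\cdot\lambda,\eta)$, and by Lemma \ref{lemma:algebraic-geometric-agreement} we have $\Gamma(X,\mc{I}(w^C,\lambda+\rho,\eta))\cong M^\vee(w^C\cdot\lambda,\eta)$. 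Applying $\Gamma(X,-)$ to the canonical maps $\mc{M}(w^C,\lambda+\rho,\eta)\twoheadrightarrow\mc{L}(w^C,\lambda+\rho,\eta)$ and $\mc{L}(w^C,\lambda+\rho,\eta)\hookrightarrow\mc{I}(w^C,\lambda+\rho,\eta)$ and using exactness of $\Gamma$ here yields the required morphisms $M(w^C\cdot\lambda,\eta)\to L(w^C\cdot\lambda,\eta)$ and $L(w^C\cdot\lambda,\eta)\to M^\vee(w^C\cdot\lambda,\eta)$, so that conditions (2) and (3) hold with the claimed objects. Conditions (1), (4), (5), and (7) then transfer verbatim from Theorem \ref{thm:geometric-highest-weight} via the equivalence, and (6) transfers because $\Gamma$ preserves kernels, cokernels, and the Serre subcategories $\mc{A}_{<\lambda}$.

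The one point requiring care --- and the step I expect to be the main obstacle --- is the bookkeeping that the Bruhat order on $W_\eta\backslash W$ used in Theorem \ref{thm:geometric-highest-weight} matches, under $\Gamma(X,-)$, the order relevant to the statement of the corollary: one must check that the Serre subcategory of $\mc{M}_{\text{coh}}(\mc{D}_{\lambda+\rho},N,\eta)$ generated by the $\mc{L}(w^D,\lambda+\rho,\eta)$ for $D\leq C$, which (as in the proof of Theorem \ref{thm:geometric-highest-weight}) is the category of $\eta$-twisted Harish-Chandra sheaves supported on $\overline{C(w^C)}$, is carried by the equivalence to the Serre subcategory of $\mc{N}(\xi(\lambda),\eta)$ generated by the $L(w^D\cdot\lambda,\eta)$ with $w^D\cdot\lambda\leq w^C\cdot\lambda$ in the transported order. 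This is immediate once one records that an equivalence of abelian categories carries Serre subcategories generated by a set of simples to the Serre subcategory generated by the images, combined with the identification of simples in Proposition \ref{prop:global-sections-of-standard}; the only substance is to declare explicitly that the poset appearing in Corollary \ref{cor:highest-weight} is precisely the image of the Bruhat order on longest coset representatives under $C\mapsto w^C\cdot\lambda$ (the agreement of that order with the closure order on Bruhat cells being \cite[Ch.6 \S1]{Localization}). With this fixed, all seven conditions are verified and the corollary follows.
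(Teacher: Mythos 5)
Your proposal is correct and follows the same route as the paper: the paper's proof simply cites the Beilinson--Bernstein equivalence (\ref{BB equivalence}), Proposition \ref{prop:global-sections-of-standard}, Theorem \ref{thm:geometric-highest-weight}, and Lemma \ref{lemma:algebraic-geometric-agreement} to transport the highest weight structure, which is precisely what you do. Your extra remarks on the invariance of the axioms under an abelian equivalence and the transport of the poset via $C\mapsto w^C\cdot\lambda$ are just a more explicit spelling-out of the same argument.
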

\begin{proof}
 This follows immediately from the equivalence (\ref{BB equivalence}), Proposition \ref{prop:global-sections-of-standard}, Theorem \ref{thm:geometric-highest-weight}, and Lemma \ref{lemma:algebraic-geometric-agreement}. 
\end{proof}


\section{BGG Reciprocity}
\label{sec:BGG-Reciprocity}
In this section we give an application of the previous results by generalizing the Bernstein--Gelfand--Gelfand reciprocity formulas to $\N$ (Theorem \ref{thm:bgg}). We begin by recalling some well-known properties of highest weight categories. By Corollary \ref{cor:highest-weight} and \cite[Thm. 3.2.1]{BGS}, we conclude that $\mc{N}(\xi(\lambda), \eta)$ has enough projective objects and enough injective objects. Moreover, each projective object has a finite filtration with standard subquotients (we refer to this filtration as the \emph{standard filtration}) and each injective object has a finite filtration with costandard subquotients. 
\begin{remark}
Corollary \ref{cor:highest-weight} also guarantees the existence of indecomposible tilting\footnote{An object in a highest weight category is called {\em tilting} if it has both a filtration with standard subquotients and a filtration with costandard subquotients.} objects in $\mc{N}(\xi(\lambda), \eta)$. 
\end{remark}

Let $P(\lambda,\eta)$ be a projective cover of $L(\lambda,\eta)$, and 
\begin{align*}
    \big(P(\lambda,\eta):M(\mu,\eta)\big)
\end{align*}
be the multiplicity of the standard Whittaker module $M(\mu,\eta)$ in the standard filtration of $P(\lambda,\eta)$. Let 
\begin{align*}
    \big[M^\vee(\mu,\eta):L(\lambda,\eta)\big]
\end{align*}
be the multiplicity of the irreducible module $L(\lambda,\eta)$ in the Jordan--H\"older filtration of $M^\vee(\mu,\eta)$. 
\begin{theorem}[BGG Reciprocity for $\N$]
\label{thm:bgg}
\begin{align*}
    \big(P(\lambda,\eta):M(\mu,\eta)\big)=\big[M^\vee(\mu,\eta):L(\lambda,\eta)\big]. 
\end{align*}
\end{theorem}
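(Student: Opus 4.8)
The plan is to reduce the BGG reciprocity statement for $\N$ to the standard argument that works in any highest weight category, using the projective, standard, and costandard objects supplied by Corollary \ref{cor:highest-weight}. First I would fix the block $\N(\xi(\lambda),\eta)$ (so that there are only finitely many standard, costandard, and simple objects, indexed by $W_\eta\backslash W$), and invoke Corollary \ref{cor:highest-weight} together with \cite[Thm. 3.2.1]{BGS} to obtain: enough projectives, a standard filtration on each projective, and the BGG-type Ext-orthogonality between standard and costandard objects, namely $\dim\Ext^i_{\N}(M(\nu,\eta),M^\vee(\gamma,\eta)) = \delta_{i,0}\delta_{\nu,\gamma}$ where $\nu,\gamma$ range over $W_\eta\cdot\lambda$-orbits (equivalently over $W_\eta\backslash W$). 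The vanishing of $\Ext^1$ is what makes the standard-filtration multiplicities well-defined and computable via Euler characteristics, and the $\Hom$-orthogonality is immediate from the definitions of standard and costandard objects (unique irreducible quotient resp. submodule, with the kernel/cokernel supported on strictly smaller parameters).

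Next I would carry out the usual two-step counting argument. Step one: since $P(\lambda,\eta)$ has a finite standard filtration with subquotients $M(\mu,\eta)$ occurring with multiplicity $\big(P(\lambda,\eta):M(\mu,\eta)\big)$, applying $\Hom_\N(-,M^\vee(\mu,\eta))$ and using the Ext-orthogonality above gives
\[
\dim\Hom_\N(P(\lambda,\eta),M^\vee(\mu,\eta)) = \big(P(\lambda,\eta):M(\mu,\eta)\big).
\]
Step two: since $P(\lambda,\eta)$ is a projective cover of $L(\lambda,\eta)$, the functor $\Hom_\N(P(\lambda,\eta),-)$ is exact and computes the multiplicity of $L(\lambda,\eta)$ as a composition factor; applying it to a Jordan--Hölder filtration of $M^\vee(\mu,\eta)$ yields
\[
\dim\Hom_\N(P(\lambda,\eta),M^\vee(\mu,\eta)) = \big[M^\vee(\mu,\eta):L(\lambda,\eta)\big].
\]
Comparing the two displays gives the result. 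The one subtlety to address is that $\Hom_\N(P(\lambda,\eta),L(\nu,\eta))$ is one-dimensional when $\nu$ is in the same $W_\eta$-orbit as $\lambda$ and zero otherwise; this is exactly condition (4) of the highest weight category definition ($\End(L(\lambda,\eta))=\C$), which we have via Theorem \ref{thm:geometric-highest-weight} and the equivalence (\ref{BB equivalence}).

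I expect the main (and only real) obstacle to be bookkeeping rather than mathematics: namely, making sure the indexing is consistent — our simple, standard, and costandard objects in $\N(\xi(\lambda),\eta)$ are parameterized by $W_\eta\backslash W$ via $C\mapsto w^C\cdot\lambda$ (Corollary \ref{cor:highest-weight}), and the $\Ext$-orthogonality from \cite{BGS} must be transported along the Beilinson--Bernstein equivalence (\ref{BB equivalence}) and Proposition \ref{prop:global-sections-of-standard} / Lemma \ref{lemma:algebraic-geometric-agreement} so that standard objects match $M(w^C\cdot\lambda,\eta)$ and costandard objects match $M^\vee(w^C\cdot\lambda,\eta)$. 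Once that dictionary is in place, the proof is the classical Euler-characteristic argument verbatim, and when $\eta=0$ it specializes to the usual BGG reciprocity in category $\mc{O}$. I would therefore keep the write-up short: state the Ext-orthogonality as a consequence of the highest weight structure, then give the two $\Hom$-dimension computations and conclude.
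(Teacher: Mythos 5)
Your proposal is correct and follows essentially the same route as the paper: the paper's proof simply invokes Corollary \ref{cor:highest-weight} together with the standard BGG-reciprocity argument for highest weight categories (citing Irving and Humphreys), which is precisely the two-step $\Hom(P(\lambda,\eta),M^\vee(\mu,\eta))$ counting you spell out. The only refinement worth noting is that the $\Ext^1$-vanishing between standards and costandards is a consequence of the highest weight structure (via \cite[Thm. 3.2.1]{BGS}) rather than being immediate from the definitions, but you invoke the right source for it.
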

\begin{proof}
 The result follows from Corollary  \ref{cor:highest-weight} and the proof of BGG reciprocity for category $\cO$ (and more generally for highest weight categories), see \cite[Thm.\ 3.1]{Irving} and \cite[Chap. 3]{BGGcatO}.

 \end{proof} 


\begin{thebibliography}{HMSW87}

\bibitem[Bac97]{Backelin}
E.~Backelin.
\newblock Representation of the category {$\mc{O}$} in {W}hittaker categories.
\newblock {\em Internat. Math. Res. Notices}, (4):153--172, 1997.

\bibitem[BGG76]{BGG}
I.~N. Bern\u{s}te{\u\i}n, I.~M. Gel`fand, and S.~I. Gel`fand.
\newblock A certain category of $\mf{g}$-modules.
\newblock {\em Funkcional. Anal. i Prilo\u{z}en.}, 10(2):1--8, 1976.

\bibitem[BGS96]{BGS}
A.~Be{\u\i}linson, V.~Ginzburg, and W.~Soergel.
\newblock Koszul duality patterns in representation theory.
\newblock {\em J. Amer. Math. Soc}, 9:472--527, 1996.

\bibitem[Bou98]{BourbakiCommutativeAlgebra}
Nicolas Bourbaki.
\newblock {\em Commutative algebra. {C}hapters 1--7}.
\newblock Elements of Mathematics (Berlin). Springer-Verlag, Berlin, 1998.
\newblock Translated from the French, Reprint of the 1989 English translation.

\bibitem[BR20]{BrownRomanov}
A.~Brown and A.~Romanov.
\newblock Contravariant forms on {W}hittaker modules.
\newblock {\em Proc. Am. Math. Soc.}, 49(1):37--52, 2020.

\bibitem[Bro19]{Brown2019}
A.~Brown.
\newblock {Arakawa-Suzuki functors for Whittaker modules}.
\newblock {\em J. Algebra}, 538:261 -- 289, 2019.

\bibitem[CO75]{CasselmanOsborne}
W.~Casselman and M.~S. Osborne.
\newblock The $\mathfrak{n}$-cohomology of representations with an
  infinitesimal character.
\newblock {\em Compositio Mathematica}, 31(2):219--227, 1975.

\bibitem[CPS88]{CPS}
E.~Cline, B.~Parshall, and L.~Scott.
\newblock Finite-dimensional algebras and highest weight categories.
\newblock {\em J. Reine Angew. Math.}, 391:85--99, 1988.

\bibitem[HMSW87]{HMSWI}
H.~Hecht, D.~Mili{\v{c}}i{\'c}, W.~Schmid, and J.~A. Wolf.
\newblock Localization and standard modules for real semisimple {L}ie groups.
  {I}. {T}he duality theorem.
\newblock {\em Invent. Math.}, 90(2):297--332, 1987.

\bibitem[Hum08]{BGGcatO}
J.~E. Humphreys.
\newblock {\em Representations of semisimple {L}ie algebras in the {BGG}
  category $\mathcal{O}$}, volume~94 of {\em Graduate Studies in Mathematics}.
\newblock American Mathematical Society, Providence, RI, 2008.

\bibitem[Irv90]{Irving}
Ronald~S. Irving.
\newblock B{GG} algebras and the {BGG} reciprocity principle.
\newblock {\em J. Algebra}, 135(2):363--380, 1990.

\bibitem[Kos78]{Kostant}
B.~Kostant.
\newblock On {W}hittaker vectors and representation theory.
\newblock {\em Invent. Math.}, 48:101--184, 1978.

\bibitem[Mat88]{Matumoto}
H.~Matumoto.
\newblock {Whittaker vectors and the Goodman-Wallach operators}.
\newblock {\em Acta Mathematica}, 161:183 -- 241, 1988.

\bibitem[McD85]{McDowell}
E.~McDowell.
\newblock On modules induced from {W}hittaker modules.
\newblock {\em J. Algebra}, 96(1):161--177, 1985.

\bibitem[Mila]{D-modulesnotes}
D.~Mili{\v{c}}i{\'c}.
\newblock Lectures on {A}lgebraic {T}heory of {D}-{M}odules.
\newblock Unpublished manuscript available at
  \url{http://math.utah.edu/~milicic}.

\bibitem[Milb]{Localization}
D.~Mili{\v{c}}i{\'c}.
\newblock Localization and representation theory of reductive {L}ie groups.
\newblock Unpublished manuscript available at
  \url{http://math.utah.edu/~milicic}.

\bibitem[MP98]{MP}
D.~Mili{\v{c}}i{\'{c}} and P.~Pand{\v{z}}i{\'{c}}.
\newblock {\em Equivariant Derived Categories, {Z}uckerman Functors and
  Localization}, pages 209--242.
\newblock Birkh{\"a}user Boston, 1998.

\bibitem[MS97]{CompositionSeries}
D.~Mili{\v{c}}i{\'c} and W.~Soergel.
\newblock The composition series of modules induced from {W}hittaker modules.
\newblock {\em Comment. Math. Helv.}, 72(4):503--520, 1997.

\bibitem[MS14]{TwistedSheaves}
D.~Mili\v{c}i\'{c} and W.~Soergel.
\newblock Twisted {Harish-Chandra} sheaves and {Whittaker} modules: The
  nondegenerate case.
\newblock {\em Developments and Retrospectives in Lie Theory: Geometric and
  Analytic Methods}, 37:183--196, 2014.

\bibitem[Mum65]{Mumford}
D.~Mumford.
\newblock {\em Geometric {I}nvariant {T}heory}, volume~34 of {\em Ergebnisse
  der {M}athematik und ihrer {G}renzgebiete}.
\newblock {S}pringer-{V}erlag {B}erlin {H}eidelberg, 1965.

\bibitem[Rom21]{Romanov}
A.~Romanov.
\newblock A {Kazhdan-Lusztig} algorithm for {Whittaker} modules.
\newblock {\em Algebras Represent. Theory}, 4(1):81--133, 2021.

\bibitem[Sha72]{Shapovalov}
N.~N. Shapovalov.
\newblock On a bilinear form on the universal enveloping algebra of a complex
  semisimple {L}ie algebra.
\newblock {\em Functional Analysis and Its Applications}, 6(4):307--312, 1972.

\end{thebibliography}
\end{document}